\newcommand{\sezione}[1]{\section{#1}\setcounter{equation}{0}}
\def\R{\mathbb{R}}
\def\di12{\mathcal{D}^{1,2}(\R^n)}
\def\l{{\lambda}}
\def\0l{_{0,\l}}
\def\1l{_{1,\l}}
\def\2l{_{2,\l}}
\def\3l{_{3,\l}}
\def\4l{_{4,\l}}
\def\sideremark#1{\ifvmode\leavevmode\fi\vadjust{\vbox to0pt{\vss
 \hbox to 0pt{\hskip\hsize\hskip1em
 \vbox{\hsize2.1cm\tiny\raggedright\pretolerance10000
  \noindent #1\hfill}\hss}\vbox to15pt{\vfil}\vss}}}%
\newtheorem{teo}{Theorem}
\newtheorem{prop}[teo]{Proposition}
\newtheorem{cor}[teo]{Corollary}
\newtheorem{lem}[teo]{Lemma}
\theoremstyle{definition}
\newtheorem{defin}{Definition}
\theoremstyle{remark}
\newtheorem{oss}{Remark}
\begin{document}
\title[Morse Index of Solutions to Sinh-Poisson Equation]{Morse Index of Multiple Blow-Up Solutions
to the Two-Dimensional Sinh-Poisson Equation}
\author{Ruggero Freddi}

\address{Sbai, Universit\`a di Roma ``La Sapienza", Via Antonio Scarpa, 14, Palazzina E, 00161 Roma, Italy.}
\email{ruggero.freddi@uniroma1.it}
\subjclass[2010]{35P30, 35B40}

\maketitle
\begin{abstract}
In this paper we consider the Dirichlet problem
\begin{equation} \label{iniz}
\begin{cases} -\Delta u = \rho^2 (e^{u} - e^{-u}) & \text{ in } \Omega\\
u=0 & \text{ on } \partial \Omega, 
\end{cases}
\end{equation}
where $\rho$ is a small parameter and $\Omega$ is a $C^2$ bounded domain in $\R^2$. \cite{io} proves the existence of a $m$-point blow-up solution $u_\rho$ jointly with its asymptotic behaviour. we compute the Morse index of $u_\rho$ in terms of the Morse index of the associated Hamilton function of this problem. In addition, we give an asymptotic estimate for the first $4m$ eigenvalues and eigenfunctions.\\

\smallskip
\noindent \textbf{Keywords.} Morse index, sinh-Poisson equation, eigenvalues estimates.
\end{abstract}

\sezione{Introduction}\label{s0}
We are concerned with the study of the Morse index of the Dirichlet problem
\begin{equation} \label{iniz}
\begin{cases} -\Delta u = \rho^2 (e^{u} - e^{-u}) & \text{ in } \Omega\\
u=0 & \text{ on } \partial \Omega, 
\end{cases}
\end{equation}
where $\rho$ is a small parameter and $\Omega$ is a $C^2$ bounded domain in $\R^2$. This equation has been widely studied as it is strictly related to the vortex-type configuration for 2D turbulent Euler flows (see \cite{cho}, \cite{chor}, and \cite{mar}). Its importance is due to the fact that, suitably adapted, it describes interesting phenomena in widely different areas like liquid helium, meteorology and oceanography; it highlights effects that are important in all those subjects. Moreover, its dynamics are isomorphic to those of the electrostatic guiding-center plasma, which have been widely extended to describe strongly magnetized plasmas (see for instance \cite{sch}, \cite{mon}, and \cite{spi}).

It has been known since Kirchhoff \cite{kir} that, if we let $\xi_i \in \Omega$, $i=1, \ldots, m$, be the centres of the vorticity blobs, then the $\xi_i$'s obey an approximate Hamiltonian dynamic associated to the Hamiltonian function
\begin{equation} \label{ham}
\mathcal{F}(\xi_1, \ldots, \xi_m)=\frac{1}{2} \sum_{k=1}^m R(\xi_k) + \frac{1}{2} \sum_{1\leqslant k,j \leqslant m} \alpha_k \alpha_j G(\xi_k,\xi_j),
\end{equation}
with $\alpha_k \in \lbrace 1, -1 \rbrace$, $k=1, \ldots, m$, depending on the sign of the corresponding vorticity blob.
Through two different approaches, Joyce \cite{joyce} and Montgomery \cite{montg} proved at heuristic level that, if we let $\omega$ be the vorticity, $\psi$ the flow's steam function, $\beta \in \R$ the inverse of the temperature, and $Z>0$ an appropriate normalization constant, then we have $\omega(\psi)=\frac{2}{Z}\sinh (-\psi)$ for a flow with total vorticity equal to zero, i.e. $\int_{\Omega}\omega = 0$, and $\omega(\psi)=\frac{1}{Z}e^{-\beta \psi}$ for a flow with total vorticity equal to one, i.e. $\int_{\Omega}\omega = 1$. Setting $u=-\vert \beta \vert \psi$, the 2D Euler equation in stationary form
\begin{equation} \label{montg}
\begin{split}
\begin{cases} \textbf{w} \cdot \nabla \omega (\psi) = 0 & \text{in } \Omega\\
-\Delta \psi=0 & \text{in } \Omega\\
\textbf{w} \cdot \nu = 0 & \text{on } \partial \Omega ,\end{cases}
\end{split}
\end{equation}
where $\textbf{w}$ is the velocity field, reduces to the sinh-Poisson equation \eqref{iniz}, where $\rho=\sqrt{\frac{\vert \beta \vert}{Z}}$. More recently, in \cite{cag92} and \cite{cag95} was rigorously proved that for any $\beta \geqslant -9 \pi$, $\omega(\psi) = \frac{1}{Z}e^{-\beta \psi}$ is the mean field-limit vorticity for both the micro-canonical and the canonical equilibrium statistic distributions for the Hamiltonian point-vortex model. 
The solutions to \eqref{montg} with $\beta <0$ of the type of those suggested in \cite{joyce} (`negative temperature' states) have shown to represent very well the numerical experiment on the Navier-Stokes equations with high Reynolds number (\cite{mat}, \cite{mat1}, and \cite{mont1}). For further details and recent developments in the study of this problem see \cite{cag95}, \cite{maj}, and \cite{yin}.
Due to the just mentioned results much effort has been put into finding out explicit solutions for the Euler equations with Joy-Montgomery vorticity. Among the most relevants there are the Mallier-Maslowe \cite{mal} counter rotating vortices, and their generalization (\cite{cho98} and \cite{cho03}). The Mallier-Maslowe vortices are sign changing solutions to $-\Delta u = \rho^2 \sinh (u)$, with $1$-periodic boundary conditions, one absolute maxima and minima and two nodal domains in each periodic cell, the resulting Euler flow is composed of symmetric and disjoint regions where the velocity fields are counter directed. This type of solutions are a suitable initial data for numerical computations (\cite{yin} and \cite{maj}). Moreover, their explicit expression led to recent results which gave some insight of the properties of the non-linear dynamical stabilities of periodic array of vortices (\cite{jul}, \cite{gur}, and \cite{dau}).

A lot of effort has been put into the study of equation \eqref{iniz}. In \cite{spr} it has been introduced an analytic point of view into the study of this equation and it has been studied the behaviour of non-negative solutions as $\rho$ goes to zero, when $\Omega$ is a rectangle. In \cite{jos} some important properties of the solutions around the points $\xi_i$ have been proved. In \cite{esp} it has been built a positive solution whose concentration points converges to critical point of \eqref{ham}, as $\rho$ goes to zero. More recently, in \cite{io} and \cite{bart} it has been proved the existence of a sign changing solution whose velocity field converges to a sum of Dirac deltas centred at critical points of \eqref{ham}. Furthermore, in \cite{gro} it has been constructed a solution that converges to a sum of $k$ Dirac deltas with alternate sign all centred at the same critical point of the Robin's function. 

In \cite{io} it is proved that, under some assumptions on the points $\xi_1, \ldots, \xi_m$, for each $\rho$ sufficiently small there exists a solution $u_\rho$ to the equation \eqref{iniz}, and its profile is given (see Theorem \ref{ang} for additional properties of this solution). \\
In this paper we want to prove some additional properties of the $m$-peak solutions studied in  \cite{io}, namely its Morse index and related properties of the linearized operator. So let us introduce the following eigenvalue problem,
\begin{equation} \label{lin1}
\begin{cases} -\Delta v =\mu_\rho\rho^2 (e^{u_\rho} + e^{-u_\rho})v & \text{ in } \Omega\\
v=0 & \text{ on } \partial \Omega, 
\end{cases}
\end{equation}
and denote by $\mu_{\rho,j}$ and $v_{\rho,j}$ be respectively the $j$-th eigenvalue and the $j$-th eigenfunction of problem \eqref{lin1}. 
Let us recall that the Morse index of the solution $u_\rho$ is the sum of the dimensions of the eigenspaces relative to eigenvalues $\mu_\rho<1$ of the linearized equation \eqref{lin1}.

Moreover, let
\[ \tilde{v}_{\rho,j}^{(k)} = \chi_{B_{\frac{\sqrt{8} \epsilon}{\tau_k \rho}}(0)} (x) v_{\rho,j}\left(\frac{\tau_k \rho x}{\sqrt{8}}+ \xi_{\rho,k} \right),\quad k=1,..,m,
\]
be the rescaled function of the $j$-th eigenfunction around $\xi_{\rho,k}$. We begin by proving some results about the asymptotic behaviour of the eigenvalues $\mu_{\rho,j}$, the asymptotic behaviour of the eigenfunctions $v_{\rho,j}$ away from the points $\xi_1, \ldots, \xi_m$, and the asymptotic behaviour of the rescaled eigenfunctions $\tilde{v}^{(k)}_{\rho,j}$.
These estimates allow to compute the {\em Morse index} of the solution $u_\rho$. It will be strictly related with  Morse index of the Hamiltonian function \eqref{ham}.

Let us now state the main results of this work. We start by the first $m$ eigenvalues and eigenfunctions
\begin{teo} \label{main1}
For every $j=1, \ldots, m$ there exists an $k \in \lbrace 1, \ldots, m \rbrace$ and nonzero real constants $C_j^{k}\in\R$ such that
\begin{itemize}
\item [1.] $\mu_{\rho,j} <- \frac{1}{2\log(\rho)}$, for $\rho$ small enough;
\item [2.] $\lim_{\rho \to 0} \tilde{v}^{(k)}_{\rho,j} = C^{(k)}_j$ in $C^2_{\mathrm{loc}}(\R^2)$;
\item [3.] $\lim_{\rho \to 0} \frac{v_{\rho,j}(x)}{\mu_{\rho,j}} = 8 \pi \sum_{k=1}^{m} C_j^{(k)} G(x, \xi_{k})$ in $C^1_{\mathrm{loc}}(\bar{\Omega} \setminus \lbrace \xi_1, \cdots, \xi_m \rbrace)$.
\end{itemize}
\end{teo}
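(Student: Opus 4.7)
The plan is to combine a variational min-max argument, which bounds the first $m$ eigenvalues from above, with a blow-up and Green's function analysis for the associated eigenfunctions. Throughout, I normalize the eigenfunctions by $\|v_{\rho,j}\|_{L^\infty(\Omega)} = 1$, which is the natural choice for the pointwise convergences in items $2$ and $3$.

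For item $1$, the variational characterization
$$\mu_{\rho,m} \leq \max_{v \in V_m \setminus \{0\}} \frac{\int_\Omega |\nabla v|^2}{\int_\Omega \rho^2(e^{u_\rho} + e^{-u_\rho}) v^2}$$
over any $m$-dimensional test subspace $V_m \subset H^1_0(\Omega)$ reduces the estimate to the construction of good test functions. The natural choice is $V_m = \mathrm{span}\{PZ_1, \ldots, PZ_m\}$, where $Z_k(x) = \phi_0\bigl(\frac{\sqrt{8}(x-\xi_{\rho,k})}{\tau_k\rho}\bigr)$ is the dilation mode $\phi_0(y) = \frac{1-|y|^2}{1+|y|^2}$ of the Liouville bubble transplanted to the $k$-th peak, and $P$ denotes the $H^1_0$-projection. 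Since $\phi_0 \to -1$ at infinity, the harmonic correction built into $P$ produces a logarithmic boundary layer whose Dirichlet energy is $O(1/|\log\rho|)$, while the weighted $L^2$-norm of $PZ_k$ is of order one, thanks to the mass $8\pi$ of each Liouville bubble (as recorded by the profile of $u_\rho$ in Theorem \ref{ang}). Tracking constants carefully gives the quantitative bound $\mu_{\rho,j} < -\frac{1}{2\log\rho}$ for $j \leq m$.

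For items $2$ and $3$, the main consequence of item $1$ is that $\mu_{\rho,j} \to 0$. Writing the equation satisfied by the rescaled eigenfunction,
$$-\Delta \tilde{v}^{(k)}_{\rho,j}(y) = \mu_{\rho,j}\,\frac{\tau_k^2 \rho^2}{8}\,\rho^2\bigl(e^{u_\rho}+e^{-u_\rho}\bigr)\bigl(\tfrac{\tau_k\rho}{\sqrt{8}}y+\xi_{\rho,k}\bigr)\,\tilde{v}^{(k)}_{\rho,j}(y),$$
and using Theorem \ref{ang} to see that the rescaled nonlinearity tends to a bounded Liouville kernel proportional to $(1+|y|^2)^{-2}$, the right-hand side goes to zero in $L^\infty_{\mathrm{loc}}(\R^2)$ because of the extra factor $\mu_{\rho,j}$. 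Combined with $|\tilde{v}^{(k)}_{\rho,j}| \leq 1$, standard elliptic regularity yields convergence in $C^2_{\mathrm{loc}}(\R^2)$ to a bounded harmonic function on $\R^2$, hence to a constant $C_j^{(k)}$. For item $3$, I would start from the Green's representation
$$v_{\rho,j}(x) = \mu_{\rho,j}\int_\Omega G(x,y)\,\rho^2\bigl(e^{u_\rho}+e^{-u_\rho}\bigr)(y)\,v_{\rho,j}(y)\,dy$$
and pass to the limit using the known concentration $\rho^2(e^{u_\rho}+e^{-u_\rho}) \rightharpoonup 8\pi \sum_k \delta_{\xi_k}$ together with the already established local constancy of $v_{\rho,j}$ near each $\xi_{\rho,k}$; a short $C^1_{\mathrm{loc}}$ upgrade away from the peaks is standard once pointwise convergence is known.

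The main technical obstacle is guaranteeing that the vectors $\mathbf{C}_j = (C_j^{(1)}, \ldots, C_j^{(m)}) \in \R^m$ corresponding to the first $m$ eigenfunctions are nonzero and pairwise distinct. Nontriviality of each $\mathbf{C}_j$ is obtained by contradiction: if all components vanished, then items $2$ and $3$ (recalling $\mu_{\rho,j} \to 0$) would force $v_{\rho,j} \to 0$ both on neighborhoods of the peaks and in the exterior region, contradicting $\|v_{\rho,j}\|_\infty = 1$ via a concentration argument centred at a sequence of maximum points. To get $m$ linearly independent limits, I would exploit the $L^2$-orthogonality $\int \rho^2(e^{u_\rho}+e^{-u_\rho})\,v_{\rho,j}v_{\rho,l} = 0$ for $j \neq l$ and pass to the limit by the same concentration argument to obtain $\sum_k C_j^{(k)} C_l^{(k)} = 0$, so that $\{\mathbf{C}_j\}_{j=1}^m$ is an orthogonal family in $\R^m$, hence a basis of nonzero vectors. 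This orthogonality step is the most delicate point, as it demands sharp quantitative control of the rescaled eigenfunctions and of their tails away from the peaks in order to rigorously pass to the limit in the cross-product.
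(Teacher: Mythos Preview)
Your outline for items 2 and 3 (rescaled harmonic limit via Liouville, Green's representation, the non-degeneracy argument by contradiction at a maximum point, and the orthogonality $\sum_k C_j^{(k)}C_l^{(k)}=0$) is essentially the same route the paper takes, and those parts are fine.

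The genuine gap is in item 1. Your test functions $PZ_k$, with $Z_k$ built from the dilation mode $\phi_0(y)=\frac{1-|y|^2}{1+|y|^2}$, do \emph{not} give a Rayleigh quotient of order $1/|\log\rho|$. Since $Z_k\to -1$ on $\partial\Omega$, the harmonic correction in $P$ is essentially the constant $-1$; hence $PZ_k\approx Z_k+1=\frac{2}{1+|y|^2}$ in rescaled variables, and no ``logarithmic boundary layer'' appears. The Dirichlet energy $\int_\Omega|\nabla PZ_k|^2\approx\int_{\R^2}|\nabla\phi_0|^2$ is a positive constant (of order $8\pi/3$), not $O(1/|\log\rho|)$, while the weighted $L^2$-norm $\rho^2\int(e^{u_\rho}+e^{-u_\rho})(PZ_k)^2$ is likewise of order one. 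A direct computation then gives a Rayleigh quotient $\approx 1/4$, which only yields $\mu_{\rho,m}\le \tfrac14+o(1)$ and in particular does \emph{not} show $\mu_{\rho,j}\to 0$. Since your arguments for items 2 and 3 explicitly rely on $\mu_{\rho,j}\to 0$, the whole proof collapses at this point. (Heuristically, $\phi_0$ corresponds to the eigenvalue $1$ of the limiting linearized Liouville operator, not to the eigenvalue $0$.)

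The paper's choice of test function is instead the bubble profile $U_k$ itself, cut off near $\xi_{\rho,k}$. The structural identity $-\Delta U_k=\rho^2 e^{U_k}$ makes the numerator
\[
\int|\nabla(U_k\psi_{\rho,k})|^2\approx \rho^2\!\int e^{U_k}U_k\sim 8\pi\cdot U_k(\xi_{\rho,k})\sim -32\pi\log\rho,
\]
linear in the height $U_k(\xi_{\rho,k})\sim -4\log\rho$, while the denominator
\[
\rho^2\!\int(e^{u_\rho}+e^{-u_\rho})U_k^2\psi_{\rho,k}^2\sim 8\pi\cdot U_k(\xi_{\rho,k})^2\sim 128\pi\log^2\rho
\]
is quadratic in that height, producing the ratio $\sim -1/(4\log\rho)<-1/(2\log\rho)$. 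Once this is in place for $j=1$, the paper runs an induction, choosing $\sum_k\lambda_k U_k\psi_{\rho,k}$ with the $\lambda_k$ tuned so that the projections onto the first $j-1$ eigenfunctions are $o(\log\rho)$; the same $\log\rho$ versus $\log^2\rho$ balance survives. If you replace your dilation-mode test functions by this $U_k$-based family, the rest of your sketch goes through.
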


By the previous result we see that the $j$-th eigenfunction also concentrates at the points $\xi_{k}$, $k=1,..,m$. If we consider higher eigenvalues, we again have concentration at $k$ points but a different behaviour occurs. In order to describe it let us denote by  $\eta_j$ , for $j=1, \ldots, 2m$, the eigenvalues of the Hessian matrix of the Hamilton function $\mathcal{F}$ at the point $( \xi_1, \ldots , \xi_m)$. We will always work under the assumption that $\eta_j \neq 0$, for $j=1, \ldots, 2m$.
Now we are in position to state the result about the behaviour of $\mu_{\rho,j}$ and $v_{\rho,j}$ for $j=m+1,..,3m$.

\begin{teo} \label{main2}
For every $j=m+1, \ldots, 3m$ there exists an $k \in \lbrace 1, \ldots, m \rbrace$ such that
\begin{itemize}
\item [1.] $\mu_{\rho,j}=1-3 \pi \rho^2 \eta_j (1+o(1))$, for $\rho$ small enough;
\item [2.] $\lim_{\rho \to 0} \tilde{v}_{\rho,j}^{(k)} = \frac{s_{1,j}^{(k)} x_1 + s_{2,j}^{(k)} x_2}{8+\vert x \vert^2}$ in $C^2_{\mathrm{loc}}(\R^2)$ for some $s=(s_{1,j}^{(k)}, s_{2,j}^{(k)})) \neq(0,0)$;
\item [3.]$\lim_{\rho \to 0} \frac{v_{\rho,j}}{\rho} = 2 \pi \sum_{k=1}^m \frac{\tau_k}{\sqrt{8}} \left( s_{1,j}^{(k)} \frac{\partial G}{\partial x_1} (x,\xi_k)+ s_{2,j}^{(k)} \frac{\partial G}{\partial x_2} (x,\xi_k) \right)$ in $C^1_{\mathrm{loc}}(\bar{\Omega} \setminus \lbrace \xi_1, \ldots, \xi_m \rbrace)$.
\end{itemize}
\end{teo}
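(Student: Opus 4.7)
\medskip
\noindent\textbf{Proof proposal for Theorem \ref{main2}.} The plan is to extract the three statements in parallel via a blow-up/compactness argument, followed by a second-order reduction that couples the linearized operator at each peak to the Hessian of $\mathcal{F}$. I would normalize $\|v_{\rho,j}\|_{L^{\infty}(\Omega)}=1$ and first check, by a min-max/trial function argument, that $\mu_{\rho,j}\to 1$ for $j=m+1,\ldots,3m$. As trial functions take (cut-off, boundary-corrected) translations of each peak, i.e.\ rescaled copies of $\frac{y_i}{8+|y|^2}$ centered at each $\xi_{\rho,k}$. These span a $2m$-dimensional space on which the Rayleigh quotient equals $1+O(\rho^2)$; combined with Theorem \ref{main1}, which accounts for the first $m$ eigenvalues (which lie well below $1$), this forces $\mu_{\rho,m+1},\ldots,\mu_{\rho,3m}$ to cluster at $1$.

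Next, I would analyse the rescaled eigenfunctions $\tilde v^{(k)}_{\rho,j}$. Since $\mu_{\rho,j}\to 1$ and the rescaled potential $\rho^{2}(e^{u_\rho}+e^{-u_\rho})$ converges (by Theorem \ref{ang}) to $\tfrac{16}{(8+|y|^{2})^{2}}$ on compact sets around each peak, elliptic regularity and the $L^{\infty}$ bound give $C^{2}_{\mathrm{loc}}$ subsequential limits $w^{(k)}$ that are bounded solutions of
\begin{equation*}
-\Delta w = \frac{16}{(8+|y|^{2})^{2}}\, w \qquad \text{in }\R^{2}.
\end{equation*}
The space of bounded solutions is $3$-dimensional, spanned by the dilation mode $\frac{8-|y|^{2}}{8+|y|^{2}}$ and the two translation modes $\frac{y_{i}}{8+|y|^{2}}$. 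The dilation direction must be excluded, since it corresponds precisely to the eigenfunctions identified in Theorem \ref{main1}; an orthogonality relation in the bilinear form $\int \rho^{2}(e^{u_\rho}+e^{-u_\rho})\, v_{\rho,j}\, v_{\rho,j'}$ rules it out for $j>m$. This yields statement (2). For the outer behaviour, I would write the Green-function representation
\begin{equation*}
v_{\rho,j}(x)=\mu_{\rho,j}\rho^{2}\int_{\Omega} G(x,y)\,(e^{u_\rho}+e^{-u_\rho})\, v_{\rho,j}\, dy,
\end{equation*}
split the integral into small balls around each $\xi_{\rho,k}$ plus a negligible remainder, and Taylor-expand $G(x,\cdot)$ at $\xi_k$: since the inner profile is odd to leading order, the constant term vanishes and the $O(\rho)$ term produces $\partial_i G(x,\xi_k)$ with coefficient $\frac{\tau_k}{\sqrt{8}} s_{i,j}^{(k)}$ times the normalizing integral $\int_{\R^{2}}\tfrac{16\, y_i^{2}}{(8+|y|^{2})^{3}}\, dy = 2\pi$, giving statement (3).

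The main obstacle, and the reason the precise constant $3\pi$ and the matrix $\mathrm{Hess}\,\mathcal{F}$ appear in statement (1), is the second-order reduction. The idea is to test the eigenvalue equation against the approximate translation kernel $\partial_{\xi_k^{i}} u_\rho$, project out the leading Liouville eigenvalue (which is exactly $1$), and isolate the $O(\rho^{2})$ correction. Using the ansatz and matching asymptotics of \cite{io}, the interaction of distinct peaks through the regular part of the Green function, and the self-interaction of each peak through $R(\xi_k)$, conspire to give a $2m\times 2m$ matrix that is a scalar multiple of $\mathrm{Hess}\,\mathcal{F}(\xi_1,\ldots,\xi_m)$. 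Diagonalizing this matrix assigns each $j\in\{m+1,\ldots,3m\}$ to an eigenvalue $\eta_j$ of $\mathrm{Hess}\,\mathcal{F}$ and an eigenvector whose components yield the vectors $(s_{1,j}^{(k)},s_{2,j}^{(k)})_{k=1}^m$. The hard part is this bookkeeping: carefully carrying out the expansion to order $\rho^{2}$ uniformly in $j$, justifying the projection onto the approximate kernel, and showing that every other term in the expansion cancels or is $o(\rho^{2})$, so that the coefficient is exactly $3\pi$ (where the factor $3$ arises from integrating the squared translation mode against the potential relative to the squared mode itself).
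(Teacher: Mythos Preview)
Your overall strategy is broadly on target—the min-max with translation-type test functions, the blow-up limit, the Green representation, and the testing against $\partial_{x_i} u_\rho$ are all ingredients the paper uses. But there is one genuine gap: your exclusion of the dilation mode $\frac{8-|y|^2}{8+|y|^2}$.

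You write that the dilation direction ``corresponds precisely to the eigenfunctions identified in Theorem \ref{main1}'' and that orthogonality in the weighted $L^2$ form rules it out. Neither claim is correct. The rescaled limits in Theorem \ref{main1} are \emph{constants} (the eigenvalue $0$ of the limit problem), not the dilation mode (which belongs to the eigenvalue-$1$ kernel). Moreover, orthogonality of $v_{\rho,j}$ to $v_{\rho,j'}$ for $j'\le m$ yields, in the limit,
\[
\sum_{k=1}^{m} C^{(k)}_{j'}\int_{\R^2}\frac{\tilde v^{(k)}_j(y)}{(1+|y|^2/8)^2}\,dy=0,
\]
but $\displaystyle\int_{\R^2}\frac{1}{(1+|y|^2/8)^2}\,\frac{8-|y|^2}{8+|y|^2}\,dy=0$ by direct computation, so this relation is satisfied automatically and excludes nothing. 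The dilation mode is perfectly compatible with orthogonality to the first $m$ eigenfunctions—indeed, by Theorem \ref{main3} it \emph{is} the limiting profile for $j=3m+1,\ldots,4m$.

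What the paper does instead is more delicate. It first proves (Lemma \ref{due2}) that if some $t_j^{(k)}\ne 0$, then $\log(\rho)\,v_{\rho,j}$ has a nontrivial limit in the outer region; then, via a Pohozaev-type identity (Lemma \ref{lemma3}) applied to $u_\rho$ and $v_{\rho,j}$ on $B_R(\xi_k)$, it shows (Theorem \ref{tre3}) that the presence of the dilation mode forces
\[
1-\mu_{\rho,j}=\frac{3}{2\log\rho}\bigl(1+o(1)\bigr),
\]
hence $\mu_{\rho,j}>1$ for small $\rho$. This is incompatible with the Rayleigh-quotient bound $\mu_{\rho,j}\le 1+C\rho^2$ obtained from the test functions, and \emph{that} is what kills the dilation component for $m+1\le j\le 3m$. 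Without this step the Hessian reduction cannot even begin: Lemma \ref{quasi}, which shows $\rho^{2}\!\int_{B_\epsilon(\xi_{\rho,k})}(e^{u_\rho}+e^{-u_\rho})v_{\rho,j}=o(\rho)$ and hence that the constant term in your Green expansion is negligible, explicitly uses $t_j=\mathbf{0}$.
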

Our final results concerns the study of $\mu_{\rho,j}$ and $v_{\rho,j}$ for $j=3m,..,4m$. This result, jointly with the previous theorem, is crucial for the computation of the Morse index of the solution.

Finally, we have

\begin{teo} \label{main3}
For every $j=3m+1, \ldots, 4m$ there exists an $k \in \lbrace 1, \ldots, m \rbrace$ such that
\begin{itemize}
\item [1.] $\mu_{\rho,j}=1-\frac{3}{2}\frac{1}{\log(\rho)}(1+o(1))$, for $\rho$ small enough;
\item [2.] $\lim_{\rho \to 0} \tilde{v}_{\rho,j}^{(k)}(x) = t_j^{(k)} \frac{8-\vert x \vert^2}{8+ \vert x \vert^2}$ in $C^2_{\mathrm{loc}}(\R^2)$ for some $t_j^{(k)} \neq 0$; 
\item [3.] $\log\rho\ v_{\rho,j}(x) \to 2 \pi \sum_{k=1}^m t_j^{(k)} G(x,\xi_{k})$ in $C^1_{\mathrm{loc}}(\bar{\Omega} \setminus \lbrace \xi_{1}, \ldots, \xi_{m} \rbrace)$.
\end{itemize}
\end{teo}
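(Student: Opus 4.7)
The plan is to mirror the three-step strategy used for Theorems~\ref{main1}-\ref{main2}: first pass to the limit in the rescaled eigenvalue equation at each concentration point, then use orthogonality to identify the limiting profile, and finally extract the eigenvalue asymptotics from a Pohozaev-type identity together with the Green's representation for $v_{\rho,j}$.

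After rescaling around $\xi_{\rho,k}$, the eigenvalue equation takes the form $-\Delta \tilde v_{\rho,j}^{(k)} = \mu_{\rho,j}\,W_{\rho,k}(x)\,\tilde v_{\rho,j}^{(k)}$, where the weight $W_{\rho,k}$ converges, by the profile in Theorem~\ref{ang}, to $\tfrac{8}{(8+|x|^2)^2}$ uniformly on compact subsets of $\R^2$. Since $\mu_{\rho,j}\to 1$ (only finitely many eigenvalues stay bounded away from $1$), normalizing $v_{\rho,j}$ so that $\|v_{\rho,j}\|_\infty=1$ and using standard elliptic estimates gives a subsequential limit $V^{(k)}$ which is bounded on $\R^2$ and solves the linearized Liouville equation $-\Delta V^{(k)}=\tfrac{8}{(8+|x|^2)^2}V^{(k)}$. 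The bounded kernel of this equation is the three-dimensional space spanned by $\phi_0(x)=\tfrac{8-|x|^2}{8+|x|^2}$ and $\phi_i(x)=\tfrac{x_i}{8+|x|^2}$, $i=1,2$.

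The eigenfunctions $v_{\rho,1},\ldots,v_{\rho,3m}$ produced in Theorems~\ref{main1}-\ref{main2} span, after rescaling, the directions $1$, $\phi_1$, $\phi_2$ across the $m$ bubbles. Self-adjointness of the linearized operator in the weighted inner product $\langle f,g\rangle_\rho := \int_\Omega \rho^2(e^{u_\rho}+e^{-u_\rho}) f g\, dx$ makes $v_{\rho,j}$ orthogonal to all of them for $j\ge 3m+1$, and this orthogonality survives in the limit, forcing $V^{(k)}=t_j^{(k)}\phi_0$. That at least one $t_j^{(k)}$ is nonzero follows from a dimension count: if every $t_j^{(k)}$ vanished, the concentration would occur on a strictly finer scale, producing more linearly independent eigenfunctions below the level $4m$ than allowed by the total non-degeneracy of the Hessian of $\mathcal F$ combined with Theorems~\ref{main1}-\ref{main2}.

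The main obstacle is the sharp asymptotic $\mu_{\rho,j}-1=-\tfrac{3}{2\log\rho}(1+o(1))$. I would test the equation for $v_{\rho,j}$ against $\sum_k \chi_k\bigl((x-\xi_{\rho,k})\cdot\nabla u_\rho + 2\bigr)$, with $\chi_k$ a cut-off localizing around the $k$-th peak, integrate by parts, and match the leading orders. The crucial arithmetical fact that sets the scale $1/\log\rho$ rather than an algebraic power of $\rho$ is the cancellation
\begin{equation*}
\int_{\R^2}\phi_0(x)\,\frac{8}{(8+|x|^2)^2}\,dx=0,
\end{equation*}
which kills the leading-order interior contribution; the surviving term, coming from the matching region where the bubble profile transitions to its Green's-function tail, yields the coefficient $-3/2$ after careful bookkeeping. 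Item~3 then follows from the Green's representation
\begin{equation*}
v_{\rho,j}(x)=\mu_{\rho,j}\int_\Omega G(x,y)\,\rho^2\bigl(e^{u_\rho(y)}+e^{-u_\rho(y)}\bigr)v_{\rho,j}(y)\,dy,
\end{equation*}
where on each concentration ball, rescaling and item~2 give a contribution of order $1/\log\rho$ due to the same cancellation; summing over $k$ and passing to the $C^1_{\rm loc}$ limit outside $\{\xi_1,\ldots,\xi_m\}$ (standard gradient estimates for $G$) yields the stated formula.
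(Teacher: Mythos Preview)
Your strategy---rescale to the linearized Liouville kernel, use orthogonality to isolate the $\phi_0$ component, extract the eigenvalue rate from a Pohozaev-type identity, and read off item~3 via the Green representation---is exactly the paper's route (Proposition~\ref{uno1}, Lemma~\ref{due2}, and the computation leading to \eqref{due} inside Theorem~\ref{tre3}).

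Two places in your write-up are softer than they should be and would not survive as written. First, ``only finitely many eigenvalues stay bounded away from $1$'' is not a proof that $\mu_{\rho,j}\to 1$ for $3m+1\le j\le 4m$; the paper obtains this by a Rayleigh-quotient construction analogous to Proposition~\ref{penultimo}, using $m$ additional test functions (cut-offs of the bubble profiles) made orthogonal to the first $3m$ eigenfunctions. Second, your orthogonality step conflates two roles: orthogonality to $v_{\rho,1},\ldots,v_{\rho,m}$, whose rescaled limits are \emph{constants} (eigenvalue $0$, not $1$), gives no constraint inside $\mathrm{span}\{\phi_0,\phi_1,\phi_2\}$ since all three of these are already weighted-orthogonal to constants. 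What actually eliminates the $\phi_1,\phi_2$ components is orthogonality to $v_{\rho,m+1},\ldots,v_{\rho,3m}$, whose limiting $s$-vectors span all of $\R^{2m}$ by Proposition~\ref{penultimo}, together with the weighted orthogonality of $\phi_0$ to $\phi_1,\phi_2$. The non-vanishing of some $t_j^{(k)}$ is then the concrete dimension count in the proof of Proposition~\ref{penultimo}(iii): if $t_j=\mathbf 0$, the vector $s_j\in\R^{2m}$ would be orthogonal to $2m$ already linearly independent $s_n$'s, forcing $s_j=\mathbf 0$ and contradicting Proposition~\ref{t1}. Your vaguer ``more eigenfunctions than allowed by non-degeneracy of $\mathrm{Hess}\,\mathcal F$'' does not capture this mechanism.
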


Now let us denote by $\mathcal{M}(u_{\rho})$ the Morse index of the solution $u_\rho$. We therefore get our main result.

\begin{teo} \label{finefinefine}
For a sufficiently small $\rho$ we have
$$\mathcal{M}(u_{\rho})=3m-\mathcal{M}(\mathrm{Hess} \,\mathcal{F}),$$
where by $\mathcal{M}(\mathrm{Hess} \,\mathcal{F})$ we denote the number of negative eigenvalues of the Hessian matrix of $\mathcal{F}$ at the point $( \xi_1, \ldots , \xi_m)$.
\end{teo}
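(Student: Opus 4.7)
The plan is to assemble the Morse index directly from Theorems~\ref{main1}--\ref{main3} by partitioning the eigenvalues $\{\mu_{\rho,j}\}_{j\ge 1}$ along the blocks $[1,m]\cup[m+1,3m]\cup[3m+1,4m]\cup[4m+1,\infty)$, and then using the monotonicity $\mu_{\rho,j}\le\mu_{\rho,j+1}$ to handle the tail. By definition $\mathcal{M}(u_\rho)=\#\{j:\mu_{\rho,j}<1\}$ counted with multiplicity, so once each block is classified the result is essentially bookkeeping.

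First I would look at the block $j=1,\ldots,m$: by Theorem~\ref{main1}(1), $\mu_{\rho,j}<-\tfrac{1}{2\log\rho}$, and since $\log\rho\to-\infty$ this upper bound is a small positive number; thus every one of these $m$ eigenvalues is strictly less than $1$, contributing $m$. Next, for the middle block $j=m+1,\ldots,3m$, Theorem~\ref{main2}(1) gives
\[
\mu_{\rho,j}=1-3\pi\rho^2\eta_j(1+o(1)).
\]
Because $\eta_j\neq 0$ by the nondegeneracy assumption, $\mu_{\rho,j}<1$ if and only if $\eta_j>0$, and since the $j\mapsto\eta_j$ correspondence exhausts the spectrum of $\mathrm{Hess}\,\mathcal{F}$, this block contributes precisely the number of positive eigenvalues of $\mathrm{Hess}\,\mathcal{F}$, namely $2m-\mathcal{M}(\mathrm{Hess}\,\mathcal{F})$. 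Finally, for the block $j=3m+1,\ldots,4m$, Theorem~\ref{main3}(1) gives
\[
\mu_{\rho,j}=1-\tfrac{3}{2\log\rho}(1+o(1)),
\]
which is strictly larger than $1$ for small $\rho$ because $-1/\log\rho>0$; so this block contributes $0$.

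The tail $j>4m$ cannot contribute either: the weight $\rho^2(e^{u_\rho}+e^{-u_\rho})$ in~\eqref{lin1} is strictly positive, so the standard spectral theory for weighted Dirichlet problems gives an unbounded nondecreasing sequence $\mu_{\rho,1}\le\mu_{\rho,2}\le\cdots\to+\infty$, and the strict bound $\mu_{\rho,4m}>1$ coming from Theorem~\ref{main3}(1) forces $\mu_{\rho,j}>1$ for every $j>4m$. Summing the three nonzero contributions gives
\[
\mathcal{M}(u_\rho)=m+\bigl(2m-\mathcal{M}(\mathrm{Hess}\,\mathcal{F})\bigr)+0=3m-\mathcal{M}(\mathrm{Hess}\,\mathcal{F}),
\]
which is the claim.

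Since the deep analytic content has already been absorbed into Theorems~\ref{main1}--\ref{main3}, the only real care needed at this stage is to rule out stray eigenvalues below $1$. This is precisely where the two standing hypotheses are used: the nondegeneracy $\eta_j\neq 0$ prevents a borderline $\mu_{\rho,j}$ in the middle block from sitting ambiguously on the threshold $\mu=1$, and the monotonicity of the spectrum together with the strict inequality $\mu_{\rho,4m}>1$ disposes of every index beyond the first $4m$. I expect no genuine obstacle beyond this, as the three counting steps decouple cleanly.
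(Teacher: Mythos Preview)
Your proof is correct and follows essentially the same route as the paper: both split the spectrum into the blocks $[1,m]$, $[m+1,3m]$, and $[3m+1,\infty)$, use the asymptotics to decide the sign of $\mu_{\rho,j}-1$ on each, and invoke the nondegeneracy $\eta_j\neq 0$ together with the fact that the relevant matrix has the same signature as $\mathrm{Hess}\,\mathcal{F}$. The only cosmetic difference is that the paper disposes of all $j>3m$ in one stroke via Proposition~\ref{penultimo}(iii), whereas you use Theorem~\ref{main3}(1) on $[3m+1,4m]$ and then monotonicity of the spectrum for $j>4m$; both arguments are valid and equivalent in strength.
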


From this, we also deduce that:

\begin{cor} \label{corol}
For a sufficiently small $\rho$, we have that
$$m \leqslant \mathcal{M}(u_{\rho})\leqslant 3m.$$
\end{cor}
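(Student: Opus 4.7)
The plan is to derive the corollary as an immediate consequence of Theorem \ref{finefinefine} together with the trivial bounds on the Morse index of the Hessian of $\mathcal{F}$. First I would invoke Theorem \ref{finefinefine} to rewrite
\[
\mathcal{M}(u_{\rho})=3m-\mathcal{M}(\mathrm{Hess}\,\mathcal{F}),
\]
so that the problem reduces to bounding $\mathcal{M}(\mathrm{Hess}\,\mathcal{F})$ from above and below.

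Next I would point out that the Hamilton function $\mathcal{F}$ defined in \eqref{ham} is a smooth real-valued function on $\Omega^{m}\subset(\R^{2})^{m}=\R^{2m}$. Hence its Hessian at the critical point $(\xi_{1},\ldots,\xi_{m})$ is a real symmetric $2m\times 2m$ matrix, with exactly $2m$ real eigenvalues $\eta_{1},\ldots,\eta_{2m}$ counted with multiplicity. By the standing assumption made in the paper (before Theorem \ref{main2}), all the $\eta_{j}$'s are nonzero, so the number $\mathcal{M}(\mathrm{Hess}\,\mathcal{F})$ of negative eigenvalues satisfies
\[
0\leqslant \mathcal{M}(\mathrm{Hess}\,\mathcal{F})\leqslant 2m.
\]

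Substituting these bounds into the equality from Theorem \ref{finefinefine} yields
\[
3m-2m\leqslant \mathcal{M}(u_{\rho})\leqslant 3m-0,
\]
which is exactly $m\leqslant \mathcal{M}(u_{\rho})\leqslant 3m$, as claimed. Since all the ingredients are already established or are direct consequences of the definitions, there is no substantive obstacle in this argument; the only thing to verify is the dimension count ($2m$ rather than $m$ or $m^{2}$), which is immediate from the fact that each concentration point $\xi_{k}$ lies in the planar domain $\Omega\subset\R^{2}$.
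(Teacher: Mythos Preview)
Your proof is correct and follows essentially the same approach as the paper: both invoke Theorem \ref{finefinefine} and then observe that $\mathrm{Hess}\,\mathcal{F}$ is a $2m\times 2m$ matrix, so $0\leqslant \mathcal{M}(\mathrm{Hess}\,\mathcal{F})\leqslant 2m$. You add a bit more detail about the dimension count and the nondegeneracy assumption, but the argument is the same.
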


It is interesting to notice that the above results are similar to those obtained in \cite{GF} where the Morse index for positive multiple blow-up solutions to the Gelfand problem is calculated . The biggest difference between this work and \cite{GF} is that the solutions to the Gelfand's problem can not have sign changing blow-up solutions, while this type of solutions are possible in our case. 

Another problem where some similarities occur  is given by 
\begin{equation}\label{dip} 
\begin{cases} -\Delta u = \vert u \vert^{p-1}u & \text{ in } B\\
u=0 & \text{ on } \partial B, 
\end{cases}
\end{equation}
where $B$ is the ball of radius one centered at the origin and $p>1$ is large.
A comparison of our results with those obtained in \cite{fil} shows how different they are. In our case, the Morse index of a solution depends on the Morse index of the Hamiltonian function, while in \cite{fil} is proved that if $u$ is the least energy sign-changing radial solution to \eqref{dip} then its Morse index is twelve.

Finally, we want to mention the results obtained in \cite{bar} where the equation
\begin{align*}
\begin{cases} -\Delta u = f(u) & \text{ in } \Omega\\
u=0 & \text{ on } \partial \Omega, 
\end{cases}
\end{align*}
is studied. In \cite{bar} are given proper hypothesis on the regularity and the growth rate of the non-linear part $f$ for the existence of sign changing solutions with Morse index at most equal to one and for sign changing solution with Morse index equal to two.

The paper is organized as follows: in Section 2, we recall some known facts on
problem \ref{iniz} and we introduce some notations. In Section 3, we prove Theorem \ref{main1}. In Section 4, we study the asymptotic behaviour of the eigenvalues and eigenfunctions from $m+1$ to $3m$ and we prove some additional properties that hold for the eigenfunctions and eigenvalues from $m+1$ to $4m$. Finally, in Section 5 we prove Theorem \ref{main2}, Theorem \ref{main3} and Corollary \ref{corol}.

\section{Preliminaries and Notations}

In the following work, we will denote by $C$ a constant which may possibly change from step to step, with $o(1)$ a function which  goes to zero  in a suitable function space (which we will specify every time) and with $\Omega$ a $C^2$ bounded smooth domain in $\R^2$ or a convex polygon with corner points $\lbrace \zeta_1, \ldots, \zeta_n \rbrace \subset \partial \Omega$. Moreover, we will let $\epsilon > 0$ be such that $B_{2\epsilon}(\xi_{\rho,i}) \subset \Omega$ and $ B_{2\epsilon}(\xi_{\rho,i}) \cap B_{2\epsilon}(\xi_{\rho,j}) =\emptyset $ for any $i,j=1, \ldots, m$ and $i \neq j$, and for $\rho > 0$ sufficiently small.

We will denote by $\psi_{\rho,i}$ a function in $C^{\infty}(\R^2)$ such that $0 \leqslant \psi_{\rho,i}(x) \leqslant 1$ and $\psi_{\rho,i}(x)=1$ for each $x$ in $B_{\epsilon}(\xi_{\rho, i})$, and $\psi_{\rho,i}(x)=0$ for each $x$ in $\R^2 \setminus B_{2\epsilon}(\xi_{\rho,i})$.

Furthermore, we will denote by $U_{\tau, \xi}$ the function
$$U_{\tau, \xi}=\log\frac{8\tau^2}{\left(\tau^2 \rho^2 + \vert x-\xi \vert^2 \right)^2},$$
which verifies
\begin{equation} \label{ese}
\begin{cases} -\Delta U(x)= \rho^2 e^{U(x)}= \frac{8 \tau^2 \rho^2}{\left(\tau^2 \rho^2+ \vert x-\xi\vert^2 \right)^2} & \hbox{in }\R^2\\
\int_{\R^2} e^{U} <\infty.
\end{cases}
\end{equation}

From now on, $G(\cdot , \cdot)$ and $H(\cdot , \cdot )$ will denote respectively the Green  function and its regular part of $-\Delta$ in $\Omega$ with zero Dirichlet boundary condition. Moreover, we will denote by $R(x)=H(x,x)$ the Robin function.

We will consider the equation
\begin{equation} \label{ese}
\begin{cases} -\Delta u = \rho^2 (e^{u} - e^{-u}) & \text{ in } \Omega\\
u=0 & \text{ on } \partial \Omega \\\end{cases}
\end{equation}
and the Hamiltonian function
$$\mathcal{F}(x_1, \ldots, x_m)=\frac{1}{2} \sum_{k=1}^m R(x_k) + \frac{1}{2} \sum_{1\leqslant k,j \leqslant m} \alpha_k \alpha_j G(x_k,x_j), \quad \alpha_k \in \lbrace 1, -1 \rbrace.$$

\begin{defin} \label{def}
Given an open set $U \subset \R^k$, $F:U \to \R$  a $C^1$ function and $K \subset U$ e a bounded set of critical points for $F$ we say that $K$ is \emph{$C^1$-stable} for $F$ if for any $F_n \to F$ in $C^1(U)$, there exists at least one critical point $y_n \in U$ for $F_n$, and $y \in K$, such that $y_n \to y$ as $n \to \infty$.
\end{defin}

\begin{oss} It can be verified that a set $K$ of critical points for $F$ is $C^1$-stable if either one of the following conditions is satisfied:
\begin{itemize}
\item[(i)] $K$ is either a strict local maximum or a strict local minimum set for $F$;
\item[(ii)] the Brouwer degree $\mathrm{deg}(\nabla F, B_{\epsilon}(K),0)$ is non-zero, for any $\epsilon >0$ small enough, where $B_{\epsilon}(K)= \lbrace x \in U \, : \, \vert x-K \vert \leqslant \epsilon \rbrace$.
\end{itemize}
\end{oss}

The following result holds.

\begin{teo}[{\cite[Theorem 1.1]{io}}]\label{ang}
Let $(\xi_{1}, \ldots, \xi_{m})$ be a $C^1$-stable critical point for the function $\mathcal{F}$. Then, there exist $\rho_0>0$ such that for any $\rho \in (0, \rho_0 )$, the equation \eqref{ese} has a solution $u_{\rho}$ such that 
$$u_{\rho}(x) \to 8 \pi \sum_{k=1}^m \alpha_k G(x, \xi_{k}) \quad \textit{ as } \rho \to 0$$
in $C^{\infty}_{\mathrm{loc}} \left(\Omega \setminus \lbrace \xi_1, \ldots, \xi_m \rbrace \right) \cap C^{1, \sigma}_{\mathrm{loc}} \left(\bar{\Omega} \setminus \lbrace \xi_1, \ldots, \xi_m \rbrace \right)$, for some $\sigma \in (0,1)$ and $\alpha_k \in \lbrace 1, -1 \rbrace$.
\end{teo}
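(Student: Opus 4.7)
The plan is to prove existence by a Lyapunov--Schmidt finite-dimensional reduction built on the standard sinh-Poisson bubbles $U_{\tau,\xi}$ introduced above. The natural ansatz is a signed superposition
$$V_\rho(x)=\sum_{k=1}^m \alpha_k\, P U_{\tau_k,\xi_k}(x),$$
where $P U_{\tau,\xi}$ denotes the Dirichlet projection of $U_{\tau,\xi}$ onto $H^1_0(\Omega)$ (which admits the expansion $P U_{\tau,\xi}(x)=U_{\tau,\xi}(x)+8\pi H(x,\xi)-\log(8\tau^2)+O(\rho^2)$), the sign pattern $\alpha_k\in\{1,-1\}$ is prescribed, and the scale $\tau_k$ is pinned to $\xi=(\xi_1,\dots,\xi_m)$ by the matching identity
$$\log(8\tau_k^2)=8\pi\Bigl(R(\xi_k)+\sum_{j\neq k}\alpha_k\alpha_j\, G(\xi_k,\xi_j)\Bigr),$$
chosen precisely so that $V_\rho\to 8\pi\sum_k\alpha_k G(\cdot,\xi_k)$ away from the concentration points, while inside each bubble core $V_\rho$ agrees with $\alpha_k U_{\tau_k,\xi_k}$ up to a constant plus a $C^1$-small remainder.

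Setting $u=V_\rho+\phi$, equation \eqref{ese} becomes $L_\rho\phi=N(\phi)+R_\rho$, with
$$L_\rho\phi:=-\Delta\phi-\rho^2\bigl(e^{V_\rho}+e^{-V_\rho}\bigr)\phi,$$
ansatz error $R_\rho:=\Delta V_\rho+\rho^2(e^{V_\rho}-e^{-V_\rho})$, and $N(\phi)$ collecting the terms quadratic and higher in $\phi$. The first technical task is to bound $\|R_\rho\|_*$ in a suitable weighted $L^p$-norm and prove $\|R_\rho\|_*=O(\rho^{2-\sigma})$; this comes from Taylor expanding $e^{\pm V_\rho}$ against the pointwise bound on $PU_{\tau_k,\xi_k}-U_{\tau_k,\xi_k}$ and from the observation that near each positive (resp.\ negative) peak only the $e^{V_\rho}$ (resp.\ $e^{-V_\rho}$) branch is singular, while the other is exponentially small.

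The heart of the proof is the linear theory. One shows that $L_\rho$ is invertible modulo the approximate kernel generated by the $\xi_k$-translations and $\tau_k$-dilations of each $PU_{\tau_k,\xi_k}$, with the estimate $\|\phi\|_\infty\leq C|\log\rho|\,\|L_\rho\phi\|_*$ for $\phi$ orthogonal to this kernel. This is proved by contradiction: a normalized sequence $\phi_n$ violating the bound, rescaled around each peak via $\tilde\phi_n(y)=\phi_n(\tau_k\rho y/\sqrt{8}+\xi_k)$, converges locally to a bounded entire solution of
$$-\Delta w=\frac{8}{(1+|y|^2)^2}w\quad\text{in }\R^2,$$
which by the classical nondegeneracy result must lie in the span of $\frac{1-|y|^2}{1+|y|^2}$, $\frac{y_1}{1+|y|^2}$ and $\frac{y_2}{1+|y|^2}$; the orthogonality conditions force each limit to vanish, while a Green's function representation combined with the decay of $L_\rho\phi_n$ rules out concentration away from $\{\xi_k\}$, yielding the contradiction. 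A Banach fixed-point argument in a small ball of the weighted space then produces a unique $\phi_\rho=\phi_\rho(\xi)$ of size $O(\rho^{2-\sigma}|\log\rho|)$ solving the projected equation, and depending $C^1$-smoothly on $\xi$.

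The final step is the finite-dimensional reduction. One checks that the Lagrange multipliers associated with the approximate kernel vanish if and only if $\xi$ is a critical point of the reduced functional $\mathcal{F}_\rho(\xi):=J_\rho(V_\rho+\phi_\rho(\xi))$, where $J_\rho$ is the action of \eqref{ese}, and a careful energy expansion gives
$$\mathcal{F}_\rho(\xi)=c_m(\rho)+\gamma\,\mathcal{F}(\xi)+o(1)\quad\text{in }C^1_{\mathrm{loc}}$$
for a positive constant $\gamma$, uniformly on compact sets of admissible configurations. The $C^1$-stability hypothesis on $(\xi_1,\dots,\xi_m)$, via Definition \ref{def}, then furnishes a critical point $\xi^\rho\to(\xi_1,\dots,\xi_m)$ of $\mathcal{F}_\rho$, and the associated $u_\rho:=V_\rho+\phi_\rho(\xi^\rho)$ is the desired solution; the stated $C^\infty_{\mathrm{loc}}(\Omega\setminus\{\xi_k\})\cap C^{1,\sigma}_{\mathrm{loc}}(\bar\Omega\setminus\{\xi_k\})$ convergence follows from standard elliptic regularity applied to $u_\rho-V_\rho=\phi_\rho$ together with the explicit profile of $V_\rho$. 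I expect the main difficulty to lie in the sharp linear estimate: because of the mixed-sign structure, both branches $\rho^2 e^{\pm V_\rho}$ contribute to the linearization, so the cross-interactions between positive and negative peaks must be controlled uniformly in $\rho$ to avoid losing more than the single $|\log\rho|$ factor in the estimate, a subtlety that has no counterpart in the all-positive Gelfand setting of \cite{GF}.
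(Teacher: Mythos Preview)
The paper does not contain a proof of Theorem~\ref{ang}: the result is quoted verbatim from \cite[Theorem~1.1]{io} and used as a black box, so there is no ``paper's own proof'' to compare against. Your sketch is a faithful outline of the Lyapunov--Schmidt reduction carried out in \cite{io} (ansatz with projected bubbles, weighted error estimate, linear nondegeneracy via blow-up to the standard Liouville kernel, contraction for $\phi_\rho$, and $C^1$-reduction to $\mathcal{F}$), and in that sense it matches the cited source rather than the present paper.
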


In \cite{io} it is also proved that the solution of the theorem above has the form
\begin{equation} \label{sol}
u_{\rho}(x)= \sum_{k=1}^m \alpha_k PU_{\tau_{\rho,k}, \xi_{\rho,k}}(x) + \varphi_{\rho}(x),
\end{equation}  
where we denoted by $PU_{\tau_{\rho,k}, \xi_{\rho,k}}$ the projection of $U_{\tau_{\rho,k}, \xi_{\rho,k}}$ onto $H_0^1(\Omega)$. The parameter point $(\xi_{1,\rho}, \ldots, \xi_{m,\rho})\in \Omega^m$ converges to $(\xi_{1}, \ldots, \xi_{m})\in \Omega^m$ and 
$$\tau_{\rho,k}= \frac{e^{4 \pi \left[ H(\xi_{\rho,k}, \xi_{\rho,k})+\sum_{i=1,i \neq k}^m \alpha_k \alpha_i G(\xi_{\rho,k}, \xi_{\rho,i})\right]}}{\sqrt{8}} \to  \frac{e^{4 \pi \left[ H(\xi_{k}, \xi_{k})+\sum_{i=1,i \neq k}^m \alpha_k \alpha_i G(\xi_{k}, \xi_{i})\right]}}{\sqrt{8}}= \tau_k$$
as $\rho$ goes to zero. For this reason, $\tau_{\rho,k}$ will be called inappropriately $\tau_k$. Let's set $U_k=U_{\tau_k, \xi_k}$. 

We have:

\begin{teo}[{\cite[Theorem 5.2]{io}},{\cite[Theorem 3.2]{io}},{\cite[Theorem 5.1]{io}}] \label{unoo}
Let $\varphi_{\rho}$ be as in \eqref{sol}. Then 
$$\lim_{\rho \to 0} \varphi_{\rho }=0$$
in $C^{\infty}_{\mathrm{loc}} (\Omega \setminus \lbrace \xi_1, \ldots, \xi_m \rbrace) \cap C^{1, \sigma} (\bar{\Omega} \setminus \lbrace \xi_1, \ldots, \xi_m \rbrace) \cap C^{0, \alpha} ( \bar{\Omega} ) \cap W^{2,q}(\Omega) $, for a suitable $\sigma \in (0,1)$, any $\alpha \in (0,1)$, and any $q \in [1,2)$. Moreover, for any $p \in(1, \frac{4}{3})$, there exist $\rho_0>0$ and $R>0$ such that, for any $\rho \in (0,\rho_0)$, we have $\Vert \varphi_{\rho} \Vert_{H_0^1(\Omega)} \leqslant R \rho^{\frac{2-p}{p}} \vert \log(\rho) \vert$. Finally, 
let $PU_{k}$ be as in \eqref{sol}. Then
\begin{align*}
PU_{k}(x) &= U_{k}(x) + 8 \pi H(x, \xi_{\rho,k}) - \log (8 \tau_k^2)  + O(\rho^2)\\
&= \log \left( \frac{1}{( \tau_k^2 \rho^2+ \vert x-\xi_{\rho,k} \vert^2)^2} \right) + 8 \pi H(x, \xi_{\rho,k})+  O(\rho^2)
\end{align*}
in $C^{\infty}_{\mathrm{loc}} (\Omega) \cap C^{1, \sigma}(\bar{\Omega})$.

\end{teo}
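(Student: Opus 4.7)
Theorem \ref{unoo} has three parts: an expansion of the projection $PU_k$, the convergence $\varphi_\rho\to 0$ in several function spaces, and a quantitative $H_0^1$ bound on $\varphi_\rho$. Since the statement is quoted from \cite{io}, the plan is to outline a Lyapunov--Schmidt reduction of the type standard for Liouville concentration problems.

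I would first tackle the expansion of $PU_k$, which is elementary and feeds into the other parts. Observe that $PU_k-U_k$ is harmonic in $\Omega$ with boundary values $-U_k|_{\partial\Omega}$. Since $|x-\xi_{\rho,k}|\geq d_0>0$ on $\partial\Omega$, a direct expansion yields
\[
U_k(x)=\log\frac{8\tau_k^2}{(\tau_k^2\rho^2+|x-\xi_{\rho,k}|^2)^2}=-4\log|x-\xi_{\rho,k}|+\log(8\tau_k^2)+O(\rho^2)
\]
uniformly on $\partial\Omega$. Using the identity $8\pi H(x,\xi)|_{\partial\Omega}=4\log|x-\xi|$ for the regular part of the Green function, the function $PU_k-U_k+8\pi H(\cdot,\xi_{\rho,k})-\log(8\tau_k^2)$ is harmonic in $\Omega$ with $O(\rho^2)$ boundary values; the maximum principle together with Schauder estimates up to $\partial\Omega$ then give the stated expansion in $C^\infty_{\mathrm{loc}}(\Omega)\cap C^{1,\sigma}(\bar\Omega)$.

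For the convergence and the $H_0^1$ bound I would set up the ansatz $u_\rho=W_\rho+\varphi_\rho$ with $W_\rho=\sum_{k=1}^m\alpha_k PU_{\tau_{\rho,k},\xi_{\rho,k}}$ and recast \eqref{ese} as a fixed-point problem $L_\rho\varphi_\rho=R_\rho+N_\rho(\varphi_\rho)$, where $L_\rho\psi:=-\Delta\psi-\rho^2(e^{W_\rho}+e^{-W_\rho})\psi$ is the linearization at $W_\rho$, $R_\rho:=\Delta W_\rho+\rho^2(e^{W_\rho}-e^{-W_\rho})$ is the residual, and $N_\rho$ collects the super-quadratic sinh remainder. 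Using the expansion of $PU_k$ just obtained and the bubble concentration, I would estimate $\|R_\rho\|_{L^p(\Omega)}\lesssim\rho^{(2-p)/p}|\log\rho|$ for each $p\in(1,\tfrac{4}{3})$. The operator $L_\rho$ carries an approximate $3m$-dimensional kernel generated, bubble by bubble, by the two translation modes and the dilation mode of the Liouville linearization; on the $L^2$-orthogonal complement of this kernel, $L_\rho$ should be uniformly invertible with bounded inverse into $H_0^1$. A contraction-mapping argument in a ball of radius $R\rho^{(2-p)/p}|\log\rho|$ in $H_0^1$ then produces the unique $\varphi_\rho$ satisfying the projected equation, and the finite-dimensional reduction closes by choosing $(\tau_{\rho,k},\xi_{\rho,k})$ so that the associated Lagrange multipliers vanish; this matching identifies $\tau_k$ with the formula displayed after \eqref{sol} and forces $(\xi_{\rho,k})$ to converge to a $C^1$-stable critical point of $\mathcal{F}$.

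The remaining convergences will follow by bootstrapping. The $H_0^1$ smallness together with the Moser--Trudinger inequality gives $e^{\pm\varphi_\rho}\to 1$ in every $L^s$, hence $\rho^2(e^{u_\rho}-e^{-u_\rho})\to 0$ in $L^q(\Omega)$ for every $q\in[1,2)$; $W^{2,q}$ regularity then promotes this to $W^{2,q}(\Omega)$ convergence and, via Sobolev embedding, to $C^{0,\alpha}(\bar\Omega)$. Away from the $\xi_k$'s the right-hand side of the equation for $\varphi_\rho$ is smooth and small, so standard elliptic bootstrapping yields $C^\infty_{\mathrm{loc}}(\Omega\setminus\{\xi_k\})$ and Schauder estimates up to the boundary give $C^{1,\sigma}(\bar\Omega\setminus\{\xi_k\})$. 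The hardest step will be the quantitative residual estimate with its logarithmic factor: it demands careful bookkeeping of the boundary correction produced in the expansion of $PU_k$ and of the asymmetric interaction of the two exponentials at bubbles of opposite sign.
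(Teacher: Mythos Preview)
The paper does not supply its own proof of this theorem: the statement is a direct citation of results from \cite{io} (Theorems 3.2, 5.1, 5.2 there), recorded in the preliminaries section without argument. Your outline is the standard Lyapunov--Schmidt scheme that \cite{io} and related constructions (e.g.\ \cite{esp}, \cite{bart}) employ, and the three pieces you describe---the maximum-principle expansion of $PU_k$, the contraction in $H_0^1$ after projecting off the approximate kernel, and the elliptic bootstrap---are correct in spirit and in the right order. So there is nothing to compare against here beyond noting that your sketch matches the expected architecture of the proof in the source paper.
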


In the following, we will consider $\beta$ to be a fixed constant in $(\frac{1}{2},1)$ and we will call $\xi_{\rho,1}, \ldots, \xi_{\rho,m}$ the blow-up points.

We are interested in calculating the Morse index of the solutions to \eqref{ese}. 
The equation for the $j$-th eigenvalue and the $j$-th eigenfunction of the linearised equation of \eqref{ese} around $u_{\rho}$ of the form \eqref{sol} is
\begin{equation} \label{autovvv}
\begin{cases} -\Delta v_{\rho,j} = \mu_{\rho,j} \rho^2 (e^{u_{\rho}} + e^{-u_{\rho}})v_{\rho,j} & \text{ in } \Omega \\
v_{\rho,j}=0 & \text{ on } \partial \Omega \\
\Vert v_{j, \rho}(x) \Vert_{L^{\infty}(\Omega)}=1. \\\end{cases}
\end{equation}

Let $\chi_A$ be the characteristic function of the set $A$. We will denote by \[ \tilde{v}_{\rho,j}^{(k)} = \chi_{B_{\frac{\sqrt{8} \epsilon}{\tau_k \rho}}(0)} (x) v_{\rho,j}\left(\frac{\tau_k \rho x}{\sqrt{8}}+ \xi_{\rho,k} \right) \]
the rescaled function of the $j$-th eigenfunction around $\xi_{\rho,k}$.
We have
\begin{align*}
u_{\rho}(x)&= \sum_{i =1}^m \alpha_i \left( -2\log\left(\tau_i^2 \rho^2 + \vert x-\xi_{\rho, i} \vert^2 \right) + 8 \pi H(x,\xi_{\rho,i}) \right) +\varphi_{\rho}(x)+o(1)\\
&=-2 \alpha_k \log\left(\tau_k^2 \rho^2+ \vert x-\xi_k \vert^2 \right)+ \ell_{\rho,k}(x) + \varphi_{\rho}(x)
\end{align*}
in $C^{\infty}_{\mathrm{loc}} (\Omega) \cap C^{1, \sigma}(\bar{\Omega})$, where
$$\ell_{\rho,k}(x)=-2 \sum_{i=1,i \neq k}^m \alpha_i \log\left(\tau_i^2 \rho^2 + \vert x-\xi_{\rho, i} \vert^2 \right) + 8 \pi \sum_{i=1}^m \alpha_i H(x,\xi_{\rho,i})+o(1)$$
in $C^{\infty}_{\mathrm{loc}} (\Omega) \cap C^{1, \sigma}(\bar{\Omega})$. Moreover,
\begin{align*}
\chi_{B_{\frac{2\sqrt{8} \epsilon}{\tau_k \rho}}(0)} (x) \ell_{\rho,k}\left(\frac{\tau_k \rho x}{\sqrt{8}}+ \xi_{\rho,k} \right)= \ell_k+o(1)
\end{align*}
in $C^{\infty}_{\mathrm{loc}} (\R^2) \cap C^{1, \sigma}(\R^2)$, where
\begin{align*}
\ell_k&=- 4 \sum_{i=1,i \neq k}^m \alpha_i \log(\vert \xi_k-\xi_i \vert ) + 8 \pi \sum_{i=1}^m \alpha_i H(\xi_i, \xi_k)=8 \pi \alpha_k \left( H(\xi_k, \xi_k ) + \sum_{i=1,i \neq k}^m \alpha_k \alpha_i G (\xi_i, \xi_k) \right).
\end{align*} 
Furthermore, 
\begin{align*}
\chi_{B_{\frac{2\sqrt{8} \epsilon}{\tau_k \rho}}(0)} (x) u_{\rho}\left(\frac{\tau_k \rho x}{\sqrt{8}}+ \xi_{\rho,k} \right)=-4 \alpha_k \log(\tau_k \rho) -2 \alpha_k \log \left( 1+ \frac{\vert x \vert^2}{8} \right) + \ell_{k}(x) + o(1)
\end{align*}
in $C^{0,\alpha}_{\mathrm{loc}}(\R^2)$.
For the rescaled function of the derivative of $u_{\rho}$ we have
\begin{align*}
&\chi_{B_{\frac{2\sqrt{8} \epsilon}{\tau_k \rho}}(0)} (x) \frac{\partial u_{\rho}}{\partial x_i}\left(\frac{\tau_k \rho x}{\sqrt{8}}+ \xi_{\rho,k} \right)\\
&= - \frac{4 \alpha_k}{\sqrt{8}\tau_k \rho} \frac{x_i}{\left( 1+ \frac{\vert x \vert^2}{8} \right)} + \frac{\partial \ell_{\rho,k}}{\partial x_i}\left(\frac{\tau_k \rho x}{\sqrt{8}}+ \xi_{\rho,k} \right) + \frac{\partial \varphi_{\rho}}{\partial x_i}\left(\frac{\tau_k \rho x}{\sqrt{8}}+ \xi_{\rho,k} \right) +o(1) 
\end{align*}
in $C^{\infty}_{\mathrm{loc}}(\R^2) \cap C^{0,\sigma}(\R^2)$. 
Finally,
\begin{align*}
\chi_{B_{\frac{\sqrt{8} \epsilon}{\tau_k \rho}}(0)} (x)e^{\pm u_{\rho} \left(\frac{\tau_k \rho x}{\sqrt{8}}+ \xi_{\rho,k} \right)}=\left(\frac{\tau_k^2 \rho^4}{8}  \left(1+ \frac{\vert x \vert^2}{8}\right)^2 \right)^{\mp \alpha_k} \left( 1+o(1) \right)
\end{align*}
in $C^{0,\alpha}(\R^2)$.

The previous estimates allow to use the Lebesgue convergence Theorem in different situations.

\section{Eigenvalues from $1$ to $m$}

In this chapter, we will study the behaviour of the first $m$ eigenvalues and eigenfunctions. In particular, we will prove that the first $m$ eigenvalues go to zero as $\rho$ goes to zero, and hence the Morse index is greater or equal than $m$. In addition, we will provide an estimate for the asymptotic behaviour of the first $m$ eigenfunctions, which will also be of fundamental importance in the next chapter.

We start by studying the asymptotic behaviour of the eigenfunctions rescaled around the blow-up points. We prove the following.
\begin{prop} \label{t1}
The function $\tilde{v}_{\rho,j}^{(k)}=\chi_{B_{\frac{\epsilon}{\tau_k \rho}}(0)}(x) v \left( \frac{\tau_k \rho x}{\sqrt{8}} + \xi_{\rho,k} \right)$, converges in $C^2_{\mathrm{loc}}(\R^2)$ to the function $\tilde{v}^{(k)}_{j}$, solution to the equation
\begin{equation} \label{eqlim}
-\Delta \tilde{v}_{j}^{(k)}(x) = \mu_{j} \frac{1}{\left(1+\frac{\vert x \vert^2}{8}\right)^2} \tilde{v}_{j}^{(k)}(x), \quad x \in \R^2,
\end{equation}
where $\mu_j= \lim_{\rho \to 0} \mu_{\rho,j}$.
Furthermore, there exists at least an $k \in \lbrace 1, \ldots, m \rbrace$ such that $\tilde{v}_j^{(k)} \neq 0$.
\end{prop}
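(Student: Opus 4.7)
The plan has two main stages: first, rescale the eigenvalue equation around each $\xi_{\rho,k}$ and extract a $C^2_{\mathrm{loc}}$ limit by elliptic regularity; second, rule out the trivial limit via the Green's representation of $v_{\rho,j}$ together with the normalization $\Vert v_{\rho,j}\Vert_{L^\infty(\Omega)}=1$.

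Setting $y=\frac{\tau_k\rho x}{\sqrt 8}+\xi_{\rho,k}$ in \eqref{autovvv} and using $\Delta_x=\frac{\tau_k^2\rho^2}{8}\Delta_y$, the rescaled function satisfies, on $B_{\sqrt 8\epsilon/(\tau_k\rho)}(0)$,
$$-\Delta_x\tilde v_{\rho,j}^{(k)}(x)=\mu_{\rho,j}\,\frac{\tau_k^2\rho^4}{8}\bigl(e^{u_\rho}+e^{-u_\rho}\bigr)\!\Bigl(\tfrac{\tau_k\rho x}{\sqrt 8}+\xi_{\rho,k}\Bigr)\,\tilde v_{\rho,j}^{(k)}(x).$$
The asymptotic expansion for $e^{\pm u_\rho}$ recalled at the end of Section 2 then yields
$$\frac{\tau_k^2\rho^4}{8}\bigl(e^{u_\rho}+e^{-u_\rho}\bigr)\!\Bigl(\tfrac{\tau_k\rho x}{\sqrt 8}+\xi_{\rho,k}\Bigr)=\frac{1+o(1)}{\left(1+\frac{\vert x\vert^2}{8}\right)^2}$$
uniformly on compact subsets of $\R^2$, independent of $\alpha_k\in\{\pm 1\}$, since whichever branch of $e^{\pm u_\rho}$ is dominant equals $\frac{8}{\tau_k^2\rho^4(1+\vert x\vert^2/8)^2}(1+o(1))$ while the other is $O(\rho^4)$.

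Because $\Vert v_{\rho,j}\Vert_{L^\infty(\Omega)}=1$, we have $\Vert\tilde v_{\rho,j}^{(k)}\Vert_{L^\infty}\leq 1$, and one may assume (passing to a subsequence) that $\mu_{\rho,j}\to\mu_j$, so the right-hand side is uniformly bounded in $L^\infty_{\mathrm{loc}}(\R^2)$. Interior $W^{2,p}$ and Schauder estimates on exhausting balls then yield $C^{2,\alpha}_{\mathrm{loc}}(\R^2)$ bounds, and Arzelà-Ascoli provides a further subsequence such that $\tilde v_{\rho,j}^{(k)}\to\tilde v_j^{(k)}$ in $C^2_{\mathrm{loc}}(\R^2)$, with the limit $\tilde v_j^{(k)}$ solving \eqref{eqlim}. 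By uniqueness of the limit equation together with the normalization, the convergence can be promoted to hold along the full sequence.

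For nontriviality, suppose by contradiction that $\tilde v_j^{(k)}\equiv 0$ for every $k=1,\dots,m$. From the Green's representation,
$$v_{\rho,j}(x)=\mu_{\rho,j}\rho^2\int_\Omega G(x,y)\bigl(e^{u_\rho(y)}+e^{-u_\rho(y)}\bigr)v_{\rho,j}(y)\,dy,$$
we split the integral into $\bigcup_k B_\epsilon(\xi_{\rho,k})$ and its complement. On the complement, $u_\rho$ stays uniformly bounded, so $\rho^2(e^{u_\rho}+e^{-u_\rho})=O(\rho^2)$ and this part is $o(1)$ uniformly in $x\in\bar\Omega$. On each $B_\epsilon(\xi_{\rho,k})$, rescaling via $y=\frac{\tau_k\rho z}{\sqrt 8}+\xi_{\rho,k}$ converts the contribution into
$$\mu_{\rho,j}\int_{B_{\sqrt 8\epsilon/(\tau_k\rho)}(0)}G\Bigl(x,\tfrac{\tau_k\rho z}{\sqrt 8}+\xi_{\rho,k}\Bigr)\,\frac{8+o(1)}{(1+\vert z\vert^2/8)^2}\,\tilde v_{\rho,j}^{(k)}(z)\,dz.$$
Using the bound $\vert G(x,y)\vert\leq C(1+\vert\log\vert x-y\vert\vert)$, the integrand is dominated by an $L^1(\R^2)$ function; the contradiction hypothesis and dominated convergence then force this integral to $0$ uniformly in $x$, whence $\Vert v_{\rho,j}\Vert_{L^\infty}\to 0$, contradicting the normalization. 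Therefore $\tilde v_j^{(k)}\not\equiv 0$ for at least one $k$.

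The main obstacle is precisely in the last step: making the Green's kernel argument uniform in $x\in\bar\Omega$ when $x$ itself approaches some $\xi_{\rho,j}$, so that the logarithmic singularity of $G$ does not obstruct dominated convergence. This is handled by further splitting $\bar\Omega$ into the regime where $x$ is close to some blow-up point, on which one invokes the rescaled equation and the uniform bound $\Vert\tilde v_{\rho,j}^{(k)}\Vert_\infty\leq 1$ directly, and the regime where $x$ stays at bounded distance from all $\xi_{\rho,k}$, on which $G$ is uniformly integrable against the weight $(1+\vert z\vert^2/8)^{-2}$.
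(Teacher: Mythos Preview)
Your derivation of the rescaled equation and the passage to the limit via elliptic estimates is essentially the same as the paper's (the paper additionally records an $H^1_0$ bound, but your direct use of the $L^\infty$ normalization is enough for interior Schauder estimates). One remark: the sentence ``by uniqueness of the limit equation together with the normalization, the convergence can be promoted to hold along the full sequence'' is not a valid justification---equation \eqref{eqlim} has a multi-dimensional solution space, so different subsequences could in principle select different limits. The paper is equally casual here; the statement should be read up to subsequences.

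The substantive issue is in the nontriviality step. The paper simply cites \cite[Proposition 2.11]{GF}; you attempt a direct Green's representation argument, which is the right idea, but the treatment of the case $x\to\xi_k$ is incomplete. Concretely, take $x_\rho$ a point where $|v_{\rho,j}(x_\rho)|=1$. If $x_\rho$ stays at positive distance from every $\xi_k$, your dominated-convergence argument works because $G(x_\rho,\cdot)$ is bounded on each $B_\epsilon(\xi_{\rho,k})$. If $|x_\rho-\xi_{\rho,k}|=O(\rho)$, then the corresponding rescaled point $w_\rho$ is bounded and $|\tilde v_{\rho,j}^{(k)}(w_\rho)|=1$ contradicts $\tilde v_{\rho,j}^{(k)}\to 0$ in $C^2_{\mathrm{loc}}$. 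But the intermediate regime $d_\rho:=|x_\rho-\xi_{\rho,k}|\to 0$ with $d_\rho/\rho\to\infty$ is not covered: after your rescaling the kernel $G(x_\rho,\xi_{\rho,k}+\tau_k\rho z/\sqrt 8)$ carries a factor $-\tfrac{1}{2\pi}\log\rho$, so the dominating function is not $\rho$-independent and DCT alone does not force the integral to $0$. Your last paragraph acknowledges the difficulty but ``invoking the rescaled equation and the uniform bound'' is not enough as stated. The standard fix (which is what \cite{GF} carries out) is a second blow-up at scale $d_\rho$: setting $\hat v_\rho(y)=v_{\rho,j}(x_\rho+d_\rho y)$, one checks that $d_\rho^2\rho^2(e^{u_\rho}+e^{-u_\rho})(x_\rho+d_\rho y)\le C\rho^2/d_\rho^2\to 0$ on $\{|y|\le 1/2\}$, so $\hat v_\rho$ converges to a bounded harmonic function attaining an interior extremum, and one iterates or uses Liouville/maximum principle considerations to reach a contradiction. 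Without this extra step the argument has a genuine gap.
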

\begin{proof}
For any $x \in B_{\frac{\epsilon}{\tau_k \rho}(0)}$ we have
\begin{equation} \label{eq1}
\begin{split}
&-\Delta \tilde{v}^{(k)}_{\rho,j} (x)\\
&= \mu_{\rho,j} \frac{\tau_k^2 \rho^4}{8} \left(e^{u_{\rho} \left( \frac{\tau_k \rho x}{\sqrt{8}} + \xi_{\rho,k} \right)} + e^{-u_{\rho}\left( \frac{\tau_k \rho x}{\sqrt{8}} + \xi_{\rho,k} \right) } \right) v_{\rho,j}\left( \frac{\tau_k \rho x}{\sqrt{8}} + \xi_{\rho,k} \right)\\
&=\mu_{\rho,j} e^{o(1)}\left(\frac{1}{\left(1+\frac{\vert x \vert^2}{8} \right)^2} + \frac{\tau_k^4  \rho^8}{64} \left(1+\frac{\vert x \vert^2}{8} \right)^2 \right) \tilde{v}_{\rho,j}^{(k)} (x).\\
\end{split}
\end{equation}
Note that
\begin{equation} \label{sti}
\frac{\tau_k^4  \rho^8}{64} \left(1+\frac{\vert x \vert^2}{8} \right)^2 \leqslant C \rho^8 \frac{1}{\rho^4}  =o(1), \quad \forall x \in B_{\frac{\epsilon}{\tau_k \rho}(0)},
\end{equation}
and
\begin{equation}
\begin{split}
&\Vert \tilde{v}^{(k)}_{\rho,j} \Vert^2_{H_0^1(\R^2)}\\
&=- \int_{B_{\frac{\epsilon}{\tau_k \rho}}(0)}  \tilde{v}^{(k)}_{\rho,j}(x) \Delta \tilde{v}^{(k)}_{\rho,j}(x) dx\\
&=\mu_{\rho,j} e^{o(1)} \int_{B_{\frac{\epsilon}{\tau_k \rho}}(0)}\frac{ \left(\tilde{v}_{\rho,j}^{(k)} (x)\right)^2}{\left(1+\frac{\vert x \vert^2}{8} \right)^2} dx +\\
& +\mu_{\rho,j} e^{o(1)} \frac{\tau_k^4  \rho^8}{64} \int_{B_{\frac{\epsilon}{\tau_k \rho}}(0)} \left(1+\frac{\vert x \vert^2}{8} \right)^2 \left( \tilde{v}_{\rho,j}^{(k)} (x)\right)^2 dx\\
&\leqslant \mu_{\rho,j} e^{o(1)} \int_{B_{\frac{\epsilon}{\tau_k \rho}}(0)}\frac{1}{\left(1+\frac{\vert x \vert^2}{8} \right)^2} dx + \mu_{\rho,n} e^{o(1)} \frac{\tau_k^4  \rho^8}{64} \int_{B_{\frac{\epsilon}{\tau_k \rho}}(0)} \left(1+\frac{\vert x \vert^2}{8} \right)^2 dx\\
&\hbox{using \eqref{sti}}\\
&\leqslant C,
\end{split}
\end{equation}
Since the right hand side \eqref{eq1} is bounded in $\R^2$ uniformly in $\rho$, and since $\lbrace \nabla v_{\rho,j} \rbrace$ is bounded in $L^2(\R^2)$, by the standard theory of elliptic equations, we have that $\tilde{v}_{\rho,j}^{(k)} \to \tilde{v}_{j}^{(k)}$ in $C^2_{\mathrm{loc}}(\R^2)$, where $\tilde{v}_{j}^{(k)}$ is as in \eqref{eqlim}.

For the proof of the second part of the theorem we refer to \cite[Proposition 2.11]{GF}.
\end{proof}
We can now give an estimate for the first $m$ eigenvalues.
\begin{prop} \label{t2}
We have that 
$$\mu_{\rho,1} < -\frac{1}{2\log(\rho)}.$$
\end{prop}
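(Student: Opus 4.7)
My approach is to use the variational characterization of the first eigenvalue,
$$
\mu_{\rho,1} = \inf_{v \in H_0^1(\Omega)\setminus\{0\}} \frac{\int_\Omega |\nabla v|^2\,dx}{\int_\Omega \rho^2(e^{u_\rho}+e^{-u_\rho}) v^2\,dx},
$$
and to exhibit a single test function whose Rayleigh quotient is of order $\frac{1}{|\log\rho|}$. The natural choice is $V := PU_{\tau_k,\xi_{\rho,k}}$ for an index $k$ with $\alpha_k=+1$; such a $k$ may be assumed to exist, since both \eqref{iniz} and \eqref{autovvv} are invariant under $u_\rho\leftrightarrow -u_\rho$ (which flips every $\alpha_i$).

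For the numerator, the projection $V$ satisfies $-\Delta V = \rho^2 e^{U_k}$ in $\Omega$, so that
$$
\int_\Omega |\nabla V|^2\,dx = \int_\Omega V\cdot \rho^2 e^{U_k}\,dx.
$$
The measure $\rho^2 e^{U_k}\,dx$ concentrates at $\xi_{\rho,k}$ with total mass $8\pi$, and the expansion of $PU_k$ from Theorem~\ref{unoo} gives, after the rescaling $z = \xi_{\rho,k}+\frac{\tau_k\rho x}{\sqrt 8}$,
$$
V(z) = -4\log\rho - 2\log\!\left(1+\tfrac{|x|^2}{8}\right) + O(1)
$$
uniformly on the rescaled ball. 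Dominated convergence then yields
$$
\int_\Omega |\nabla V|^2\,dx = -32\pi\log\rho\,(1+o(1)).
$$

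For the denominator, I would split $\Omega$ into the balls $B_{2\epsilon}(\xi_{\rho,j})$, $j=1,\ldots,m$, and their complement. Away from the blow-up points the weight $\rho^2(e^{u_\rho}+e^{-u_\rho})$ is uniformly small and $V$ is uniformly bounded, so that region contributes $o(1)$. The balls around $\xi_{\rho,j}$ with $j\neq k$ contribute only $O(1)$, since $V$ stays bounded there while $\rho^2(e^{u_\rho}+e^{-u_\rho})\,dz$ has bounded local mass. On $B_{2\epsilon}(\xi_{\rho,k})$ the pointwise rescaling identities assembled at the end of Section~2 show that $\rho^2 e^{u_\rho}\,dz\to \frac{dx}{(1+|x|^2/8)^2}$ (the $e^{-u_\rho}$ piece being negligible because $\alpha_k=+1$), and by the expansion of $V$ above, $V^2 = 16\log^2\rho + O(|\log\rho|)$ in the bulk of this mass. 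A further application of dominated convergence gives
$$
\int_{B_{2\epsilon}(\xi_{\rho,k})} \rho^2(e^{u_\rho}+e^{-u_\rho}) V^2\,dz = 128\pi\log^2\rho\,(1+o(1)),
$$
which is the dominant contribution.

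Putting the two estimates together,
$$
\mu_{\rho,1}\le \frac{\int_\Omega|\nabla V|^2\,dz}{\int_\Omega \rho^2(e^{u_\rho}+e^{-u_\rho})V^2\,dz} = -\frac{1}{4\log\rho}\bigl(1+o(1)\bigr),
$$
and since $-\frac{1}{4\log\rho}<-\frac{1}{2\log\rho}$ for $\rho\in(0,1)$ with a uniform factor of $2$ to spare, the desired bound follows for $\rho$ sufficiently small. The main technical obstacle is ensuring that the error terms in the rescaled integrands remain small uniformly on the whole ball of radius $\sim 1/\rho$: in particular, that the contribution of $\varphi_\rho$ and of the other projections $PU_j$, $j\neq k$, is indeed absorbed into the $o(1)$. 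This is handled by combining the $C^{\alpha}(\bar\Omega)\cap W^{2,q}(\Omega)$ convergence of $\varphi_\rho$ to zero from Theorem~\ref{unoo} with the rescaled asymptotics for $u_\rho$ and $e^{\pm u_\rho}$ recorded at the end of Section~2, which is precisely the Lebesgue-type convergence statement referred to there.
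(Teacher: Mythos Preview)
Your proof is correct and follows essentially the same strategy as the paper: bound $\mu_{\rho,1}$ by the Rayleigh quotient of a single-bubble test function, compute the numerator as $-32\pi\log\rho\,(1+o(1))$ and the denominator as $128\pi\log^2\rho\,(1+o(1))$, and conclude $\mu_{\rho,1}\le -\frac{1}{4\log\rho}(1+o(1))<-\frac{1}{2\log\rho}$. The only difference is cosmetic: the paper uses $U_1\psi_{\rho,1}$ (bubble times cutoff), whereas you use the projection $PU_k$; the cutoff localizes the denominator automatically so the paper avoids the extra splitting over the other balls $B_{2\epsilon}(\xi_{\rho,j})$, while your choice makes the numerator identity $\int|\nabla V|^2=\int V\rho^2 e^{U_k}$ cleaner and spares the cross-terms $\nabla U_k\cdot\nabla\psi_{\rho,k}$.
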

\proof
Using the formula for the Rayleigh quotient we have
\begin{equation} \label{eq2}
\begin{split}
\mu_{\rho,1} &= \inf_{v \in H_0^1(\Omega), v \neq 0} \frac{\int_{\Omega} \vert \nabla v \vert^2 dx }{\rho^2 \int_{\Omega} \left(e^{u_{\rho}(x)} + e^{-u_{\rho}(x)} \right) v^2(x) dx} \\
&\leqslant \frac{\int_{\Omega} \vert \nabla (U_1(x) \psi_{\rho,1}(x)) \vert^2 dx }{\rho^2 \int_{\Omega}\left(e^{u_{\rho}(x)} + e^{-u_{\rho}(x)} \right) U_1^2(x) \psi_{\rho,1}^2(x) dx}.
\end{split}
\end{equation}
We start by estimating the numerator.
\begin{equation} \label{eq3}
\begin{split}
&\int_{\Omega} \left| \nabla (U_k(x) \psi_{\rho,k}(x)) \right|^2 dx \\
&=-2\int_{\Omega} \nabla U_k(x) \nabla \psi_{\rho,k}(x) U_k(x) \psi_{\rho,k}(x) dx - \int_{\Omega} \Delta U_k(x) U_k(x) \psi_{\rho,k}^2(x)dx +\\
&+ \int_{\Omega} U_k^2(x) \vert \nabla \psi_{\rho,k}(x)\vert^2 dx + 2 \int_{\Omega} \nabla U_k(x) \nabla \psi_{\rho,k}(x) U_k(x) \psi_{\rho,k} dx\\
&\leqslant \int_{B_{2\epsilon}(\xi_{\rho,k})} \frac{8 \tau_k^2 \rho^2}{\left(\tau_k^2 \rho^2+ \vert x-\xi_{\rho,k}\vert^2 \right)^2}\log\left( \frac{8\tau_k^2}{\left(\tau_k^2 \rho^2 + \vert x-\xi_{\rho,k} \vert^2 \right)^2}\right) \psi_{\rho,k}^2\left(x \right) dx +\\
&+ C\int_{B_{2\epsilon}(\xi_{\rho,k}) \setminus B_{\epsilon}(\xi_{\rho,k})} U_k^2(x) dx\\
&= \int_{B_{\frac{2\sqrt{8}\epsilon}{\tau_k \rho}}(0)} \frac{\left(-4 \log(\rho) - \log\left(\frac{\tau_k^2}{8}\right) -2\log\left(1+\frac{\vert x \vert^2}{8}\right) \right)\psi_{\rho,k}^2\left(\frac{\tau_k \rho x}{\sqrt{8}} + \xi_{\rho,k} \right)}{\left(1+\frac{\vert x \vert^2}{8}\right)^2}  dx+C\\
&\hbox{using Lebesgue theorem}\\
&=-32 \pi \log (\rho)(1 +o(1)).
\end{split}
\end{equation}
For the denominator we have
\begin{equation} \label{eq4}
\begin{split}
&\rho^2 \int_{\Omega}\left(e^{u_{\rho}(x)} + e^{-u_{\rho}(x)} \right) U_k^2(x) \psi_{\rho,k}^2(x) dx\\ 
&\geqslant \frac{\tau_k^2 \rho^4 }{8} \frac{1}{\tau_k^4 \rho^4} \int_{B_{\frac{\sqrt{8}\epsilon}{\tau_k \rho}}(0)} \frac{e^{\ell_{\rho,k}\left(\frac{\tau_k \rho x}{\sqrt{8}} + \xi_{\rho,k} \right) + \varphi_{\rho}\left(\frac{\tau_k \rho x}{\sqrt{8}} + \xi_{\rho,k} \right) }}{\left(1+\frac{\vert x \vert^2}{8} \right)^2} \\
& \left( -4 \log(\rho) - \log\left(\frac{\tau_k^2}{8}\right) -2\log\left(1+\frac{\vert x \vert^2}{8}\right) \right)^2dx+\\
&+\frac{\tau_k^2 \rho^4 }{8} \tau_k^4 \rho^4 \int_{B_{\frac{\sqrt{8}\epsilon}{\tau_k \rho}}(0)} e^{-\ell_{\rho,k}\left(\frac{\tau_k \rho x}{\sqrt{8}} + \xi_{\rho,k} \right) - \varphi_{\rho}\left(\frac{\tau_k \rho x}{\sqrt{8}} + \xi_{\rho,k} \right) }\left(1+\frac{\vert x \vert^2}{8} \right)^2 \\
& \left( -4 \log(\rho) - \log\left(\frac{\tau_k^2}{8}\right) -2\log\left(1+\frac{\vert x \vert^2}{8}\right) \right)^2dx\\
&\hbox{using Lebesgue theorem}\\
&=128 \pi \log^2 (\rho)(1 +o(1)).
\end{split}
\end{equation}
Letting $k = 1$ in \eqref{eq3} and \eqref{eq4}, and substituting in \eqref{eq2}, we get 
$$\mu_{\rho,1} \leqslant \frac{-32 \pi \log (\rho)(1 +o(1))}{128 \pi \log^2 (\rho)(1 +o(1))} = -\frac{1+o(1)}{4 \log(\rho)}< -\frac{1}{2\log(\rho)}.$$
\endproof
We have the following
\begin{prop} \label{t3}
We have
\begin{itemize}
\item [1.] $\mu_{\rho,j} <- \frac{1}{2\log(\rho)}$ for every $j=1,\ldots, m$;
\item [2.] $\tilde{v}^{(k)}_{\rho,j} \to\tilde{v}^{(k)}_{j}= C^{(k)}_j$ in $C^2_{\mathrm{loc}}(\R^2)$ for every $j=1, \ldots, m$ and for a $k$ depending on $j$;
\item [3.] $\mu_{\rho,j} \nrightarrow 0$ for every $j>m$.
\end{itemize}
\end{prop}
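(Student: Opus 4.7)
My plan has three parts, one for each claim.

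For \textbf{Part 1} I would test the Courant--Fischer characterization
$$\mu_{\rho,j}=\min_{\substack{E\subset H_0^1(\Omega)\\ \dim E=j}}\max_{v\in E\setminus\{0\}}\frac{\int_\Omega|\nabla v|^2\,dx}{\rho^2\int_\Omega(e^{u_\rho}+e^{-u_\rho})v^2\,dx}$$
against the $m$-dimensional subspace $E_m:=\mathrm{span}(U_1\psi_{\rho,1},\ldots,U_m\psi_{\rho,m})$. Since the cut-offs have pairwise disjoint supports by the choice of $\epsilon$ in Section~2, for $v=\sum_{k=1}^m a_kU_k\psi_{\rho,k}$ all cross terms drop out and the Rayleigh quotient of $v$ becomes $\tfrac{\sum_k a_k^2 N_k}{\sum_k a_k^2 D_k}$, a convex combination of the individual Rayleigh quotients $N_k/D_k$ of the single test functions $U_k\psi_{\rho,k}$. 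Each of these is estimated exactly as in Proposition~\ref{t2} (the computations in \eqref{eq3}--\eqref{eq4} never used $k=1$ specifically), giving $N_k/D_k=-\frac{1+o(1)}{4\log\rho}$. Taking the maximum over $E_m$ and applying the min-max, one obtains $\mu_{\rho,j}\le\mu_{\rho,m}\le -\frac{1+o(1)}{4\log\rho}<-\frac{1}{2\log\rho}$ for every $j\le m$ and $\rho$ small.

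For \textbf{Part 2} I would simply combine Part~1 with Proposition~\ref{t1}: since $\mu_{\rho,j}\to 0$ for $j\le m$, the limit equation~\eqref{eqlim} degenerates into $-\Delta\tilde v_j^{(k)}=0$ on $\R^2$. The normalization $\|v_{\rho,j}\|_{L^\infty}=1$ passes to the limit, so $\tilde v_j^{(k)}$ is a bounded harmonic function on $\R^2$ and Liouville's theorem forces $\tilde v_j^{(k)}\equiv C_j^{(k)}$ for some real constant (possibly zero) for each $k$. That \emph{some} $k$ gives a nonzero constant is exactly the last assertion of Proposition~\ref{t1}.

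For \textbf{Part 3} I would argue by contradiction. If along some sequence $\rho_n\to 0$ one had $\mu_{\rho_n,m+1}\to 0$, then the monotonicity of the eigenvalues gives $\mu_{\rho_n,j}\to 0$ for every $j\le m+1$; hence the proof of Part~2 applies verbatim to each such $j$ and produces a vector $\mathbf C_j=(C_j^{(1)},\ldots,C_j^{(m)})\in\R^m\setminus\{0\}$. Choosing the $v_{\rho,j}$ to be orthogonal in the weighted inner product
$$\langle v,w\rangle_\rho:=\rho^2\int_\Omega(e^{u_\rho}+e^{-u_\rho})vw\,dx,$$
I would rescale the integral near each $\xi_{\rho,k}$ and apply Lebesgue's theorem, using the asymptotics of $e^{\pm u_\rho}$ collected at the end of Section~2, to get
$$\langle v_{\rho_n,i},v_{\rho_n,j}\rangle_{\rho_n}\longrightarrow 8\pi\sum_{k=1}^m C_i^{(k)}C_j^{(k)},$$
so that $\mathbf C_i\cdot\mathbf C_j=0$ for $i\ne j$. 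Then $\mathbf C_1,\ldots,\mathbf C_{m+1}$ would be $m+1$ pairwise orthogonal nonzero vectors in $\R^m$, an impossibility.

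The main anticipated obstacle is the identification of the limit of $\langle v_{\rho,i},v_{\rho,j}\rangle_\rho$ in Part~3: one must check that the contribution away from the blow-up points is negligible (routine, since the weight $\rho^2(e^{u_\rho}+e^{-u_\rho})$ vanishes uniformly on such sets) and that on each $B_\epsilon(\xi_{\rho,k})$ the rescaled integrand converges in $L^1$ to $C_i^{(k)}C_j^{(k)}/(1+|x|^2/8)^2$, whose integral over $\R^2$ equals $8\pi C_i^{(k)}C_j^{(k)}$. Both steps reduce to the same Lebesgue-theorem argument already used in the proof of Proposition~\ref{t2}.
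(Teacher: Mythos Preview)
Your proposal is correct, and Parts~2 and~3 coincide with the paper's own arguments (Liouville for the bounded harmonic limit, then weighted-$L^2$ orthogonality forcing the vectors $\mathbf C_j$ to be mutually orthogonal in $\R^m$).

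Part~1, however, takes a genuinely different and cleaner route. The paper proceeds by induction on the eigenvalue index: assuming $\mu_{\rho,j}<-\tfrac{1}{2\log\rho}$ for $j\le m-1$, it builds a test function $\Psi_\rho=\sum_k\lambda_kU_k\psi_{\rho,k}+\sum_{j\le m-1}a_{\rho,j}v_{\rho,j}$ orthogonal in $H_0^1$ to the previous eigenfunctions, and then must control the correction coefficients $a_{\rho,j}$. This control requires knowing the limits $C_j^{(k)}$ of the lower eigenfunctions (to choose $\lambda_k$ with $\sum_k\lambda_kC_j^{(k)}=0$), which is precisely why the induction is needed and why the paper interleaves the proofs of Parts~1 and~2. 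Your use of the subspace form of Courant--Fischer with $E_m=\mathrm{span}(U_1\psi_{\rho,1},\ldots,U_m\psi_{\rho,m})$ bypasses all of this: the disjointness of the supports (guaranteed by the choice of $\epsilon$ in Section~2) kills every cross term, so the Rayleigh quotient on $E_m$ is exactly a weighted mean of the individual quotients $N_k/D_k$, each already computed in \eqref{eq3}--\eqref{eq4}. No information about the eigenfunctions $v_{\rho,j}$ is required, and no induction is needed. The paper's approach, on the other hand, has the minor advantage that it is the template reused later (Lemma~\ref{t6}, Proposition~\ref{penultimo}) where disjoint-support test functions are no longer sufficient and genuine orthogonalization against lower eigenfunctions becomes unavoidable.
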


\proof 

We proceed by induction on $m$. By Proposition \ref{t2} we have that $\mu_{\rho,1} < -\frac{1}{2\log(\rho)}$. Let us assume that $\mu_{\rho,j} < - \frac{1}{2\log(\rho)}$ for every $j=1, \ldots, m-1$.
By Proposition \ref{t1} we have that $\tilde{v}^{(k)}_{\rho,j} \to \tilde{v}^{(k)}_{j} $ in $C^2_{\mathrm{loc}}(\R^2)$, and that $\tilde{v}^{(k)}_{j}$ is a solution to
$$-\Delta \tilde{v}_{j}^{(k)} = 0; \quad x \in \R^2.$$
Then $\tilde{v}^{(k)}_{\rho,j} \to C^{(k)}_j$ in $C^2_{\mathrm{loc}}(\R^2)$ for every $j=1, \ldots, m-1$ and every $k=1, \ldots, m$. Furthermore, we know that there exists $k \in \lbrace 1, \ldots, m \rbrace$ such that $C_j^{(k)} \neq 0$.
Set 
$$\Psi_{\rho}(x)=\sum_{k=1}^m \lambda_k U_k(x)\psi_{\rho,k}(x) + \sum_{k=1}^{m-1} a_{\rho,k} v_{\rho, k}(x),$$ 
where 
\begin{equation} \label{akk1}
a_{\rho,j} = - \frac{(\sum_{k=1}^m \lambda_k U_k \psi_{\rho,k}, v_{\rho,j})_{H_0^1(\Omega)}}{(v_{\rho,j},v_{\rho,j})_{H_0^1(\Omega)}},
\end{equation}
for $\lambda_k \in \R$, and for each $j=1, \ldots, m$.
It follows immediately that $(\Psi_{\rho}, v_{\rho,j})_{H_0^1(\Omega)}=0$ for every $j=1, \ldots, m-1$.

Let us now show that for a suitable choice of $\lambda_k$, we have $a_{\rho,j} =o\left( \log(\rho) \right)$ for every $j=1, \ldots, m-1$.
Let us start by estimating the numerator of \eqref{akk1}.
\begin{equation} \label{io1}
\begin{split}
&\frac{1}{\mu_{\rho,j}}\left( \sum_{k=1}^m \lambda_k U_k \psi_{\rho,k}, v_{\rho,j}\right)_{H_0^1(\Omega)} \\
&= \sum_{k=1}^m \lambda_k \rho^2 \frac{\tau_k^2 \rho^2}{8} \frac{1}{\tau_k^4 \rho^4} \int_{B_{\frac{2\sqrt{8}\epsilon}{\tau_k \rho}}(0)}  \frac{e^{\ell_{\rho,k}\left(\frac{\tau_k \rho x}{\sqrt{8}} + \xi_{\rho,k} \right) + \varphi_{\rho}\left(\frac{\tau_k \rho x}{\sqrt{8}} + \xi_{\rho,k} \right) }}{\left(1+\frac{\vert x \vert^2}{8} \right)^2}  \tilde{v}^{(k)}_{\rho,j}(x)\\
& \psi_{\rho,k}\left(\frac{\tau_k \rho x}{\sqrt{8}} + \xi_{\rho,k} \right) \left( -4 \log(\rho) - \log\left( \frac{\tau_i^2}{8} \right) -2\log\left(1+\frac{\vert x \vert^2}{8}\right) \right)dx+\\
&+ \sum_{k=1}^m \lambda_k \rho^2 \frac{\tau_k^2 \rho^2}{8} \tau_k^4 \rho^4 \int_{B_{\frac{2\sqrt{8}\epsilon}{\tau_k \rho}}(0)} e^{-\ell_{\rho,k}\left(\frac{\tau_k \rho x}{\sqrt{8}} + \xi_{\rho,k} \right) - \varphi_{\rho}\left(\frac{\tau_k \rho x}{\sqrt{8}} + \xi_{\rho,k} \right) } \left(1+\frac{\vert x \vert^2}{8} \right)^2 \\
&\tilde{v}^{(k)}_{\rho,j}(x)\psi_{\rho,k}\left(\frac{\tau_k \rho x}{\sqrt{8}} + \xi_{\rho,k} \right) \left( -4 \log(\rho) - \log\left( \frac{\tau_i^2}{8}\right) -2\log\left(1+\frac{\vert x \vert^2}{8}\right) \right)dx\\
&\hbox{using Lebesgue theorem}\\
&=-32 \pi \log(\rho) \left(\sum_{k=1}^m \lambda_k  C_j^{(k)} + o(1) \right).
\end{split}
\end{equation}
Hence, taking $\lambda_k$ such that $\sum_{k=1}^m \lambda_k C_j^{(k)} = 0$ for $j=1, \ldots, m-1$, we have
\begin{equation} \label{xx}
\frac{1}{\mu_{\rho,j}} \left( \sum_{k=1}^m \lambda_k  U_k \psi_{\rho,k}, v_{\rho,j}\right)_{H_0^1(\Omega)} = -32 \pi \log(\rho)o(1)= o\left( \log(\rho) \right).
\end{equation}
For the denominator we have
\begin{equation} \label{limitato}
\begin{split}
&\Vert v_{\rho,j} \Vert^2_{H_0^1(\Omega)}\\
&=\mu_{\rho,j}\Biggl( \sum_{k=1}^m \rho^2 \frac{\tau_k^2 \rho^2}{8} \frac{1}{\tau_k^4 \rho^4}\int_{B_{\frac{\sqrt{8}\epsilon}{\tau_k \rho}}(0)} \frac{e^{\ell_{\rho,k}\left(\frac{\tau_k \rho x}{\sqrt{8}} + \xi_{\rho,k} \right) + \varphi_{\rho}\left(\frac{\tau_k \rho x}{\sqrt{8}} + \xi_{\rho,k} \right) }}{\left(1+\frac{\vert x \vert^2}{8}\right)^2} \left(\tilde{v}^{(k)}_{\rho,j}(x) \right)^2 dx +o(1)\Biggr)+\\
&+\mu_{\rho,j} \Biggl( \sum_{k=1}^m \rho^2 \frac{\tau_k^2 \rho^2}{8} \tau_k^4 \rho^4\\
&\int_{B_{\frac{\sqrt{8}\epsilon}{\tau_k \rho}}(0)} e^{-\ell_{\rho,k}\left(\frac{\tau_k \rho x}{\sqrt{8}} + \xi_{\rho,k} \right) - \varphi_{\rho}\left(\frac{\tau_k \rho x}{\sqrt{8}} + \xi_{\rho,k} \right) }\left(1+\frac{\vert x \vert^2}{8}\right)^2 \left(\tilde{v}^{(k)}_{\rho,j}(x) \right)^2 dx +o(1) \Biggr) \\
& \hbox{using Lebesgue theorem}\\
&=8 \pi \mu_{j} \left(\sum_{k=1}^m (C_j^{(k)})^2+o(1)\right).
\end{split}
\end{equation}

Using \eqref{xx} and \eqref{limitato} in \eqref{akk1} we get
\begin{equation} \label{xxx}
a_{\rho,j} = \frac{o\left( \log(\rho) \right)}{8\pi \sum_{i=1}^m \left(C_j^{(i)} \right)^2 + o(1)} = o \left( \log(\rho) \right).
\end{equation}

Using the formula for the Rayleigh quotient, we have
\begin{equation} \label{app}
\begin{split}
\mu_{\rho,m} &= \inf_{\substack{v \in H_0^1(\Omega), v \neq 0, \\ v \perp v_{\rho,1}, \ldots, v \perp v_{\rho,m-1}}} \frac{\int_{\Omega} \left| \nabla v \right|^2 dx}{\rho^2 \int_{\Omega} \left(e^{u_{\rho}(x)} +e^{-u_{\rho}(x)} \right) v^2(x) dx}\\
&\leqslant \frac{\int_{\Omega} \left| \nabla \Psi_{\rho}(x) \right|^2 dx}{\rho^2 \int_{\Omega} \left(e^{u_{\rho}(x)} +e^{-u_{\rho}(x)} \right) \Psi^2_{\rho}(x)dx }\\
&= \frac{\int_{\Omega} \left| \nabla \left( \sum_{k=1}^m \lambda_k U_k(x)\psi_{\rho,k}(x) + \sum_{k=1}^{m-1} a_{\rho,k} v_{\rho, k}(x) \right) \right|^2 dx}{\rho^2 \int_{\Omega} \left(e^{u_{\rho}(x)} +e^{-u_{\rho}(x)} \right) \left(\sum_{k=1}^m \lambda_k U_k(x)\psi_{\rho,k}(x) + \sum_{k=1}^{m-1} a_{\rho,k} v_{\rho, k}(x) \right)^2 dx }
\end{split}
\end{equation}

Let us start by studying the numerator.
\begin{equation} \label{app1}
\begin{split}
&\int_{\Omega} \left| \nabla \left( \sum_{k=1}^m \lambda_k U_k(x) \psi_{\rho,k}(x) + \sum_{k=1}^{m-1} a_{\rho,k} v_{\rho, k}(x) \right) \right|^2 dx \\
&= \sum_{k=1}^m \lambda_k^2 \int_{\Omega} \left| \nabla \left( U_k(x) \psi_{\rho,k}(x)\right) \right|^2 dx +\sum_{k=1}^{m-1} a_{\rho,k}^2 \int_{\Omega} \left| \nabla v_{\rho, k}(x) \right|^2 dx +\\
& +2 \sum_{j=1}^{m-1} a_{\rho,j} \left( \sum_{k=1}^m \lambda_k U_k \psi_{\rho,k}, v_{\rho, j} \right)_{H_0^1(\Omega)}\\
&=-32 \pi \log(\rho)\sum_{k=1}^m \lambda_k^2 (1+o(1)).
\end{split}
\end{equation}
Since by \eqref{eq3} we have
$$ \int_{\Omega} \left| \nabla \left( U_k(x) \psi_{\rho,k}(x)\right) \right|^2 dx = -32 \pi \log(\rho)(1+o(1)),$$
by \eqref{limitato} and \eqref{xxx}, and since by hypothesis $\mu_{\rho,j} < -\frac{1}{2\log(\rho)}$, we have
$$a_{\rho,k}^2 \int_{\Omega} \left| \nabla v_{\rho, k}(x) \right|^2 dx =8 \pi \mu_j a_{\rho,k}^2 \left( \sum_{k=1}^m \left(C_j^{(k)} \right)^2 + o(1) \right) = o( \log(\rho)),$$
and by \eqref{xx}, \eqref{xxx}, and since by hypothesis $\mu_{\rho,j} < -\frac{1}{2\log(\rho)}$, we have
$$a_{\rho,k} \left( \sum_{k=1}^m \lambda_k U_k \psi_{\rho,k}, v_{\rho, j} \right)_{H_0^1(\Omega)} =  a_{\rho,k} \mu_{\rho,j} \log(\rho) o(1)=o(\log(\rho)).$$

For the denominator we have
\begin{equation} \label{app2}
\begin{split}
&\rho^2 \int_{\Omega} \left(e^{u_{\rho}(x)} +e^{-u_{\rho}(x)} \right) \left( \sum_{k=1}^m \lambda_k U_k(x) \psi_{\rho,k}(x) + \sum_{k=1}^{m-1} a_{\rho,k} v_{\rho, k}(x) \right)^2 dx\\
&=\sum_{k=1}^m \lambda_k^2 \rho^2 \int_{\Omega} \left(e^{u_{\rho}(x)} +e^{-u_{\rho}(x)} \right) U_k^2(x) \psi_{\rho,k}^2(x) dx +\\
&+ \sum_{k=1}^m a_{\rho,k}^2 \rho^2 \int_{\Omega} \left(e^{u_{\rho}(x)} +e^{-u_{\rho}(x)} \right) v_{\rho, k}^2(x)  dx +2 \sum_{j=1}^{m-1} a_{\rho,j} \frac{1}{\mu_{\rho,j}} \left( \sum_{k=1}^m \lambda_k U_k \psi_{\rho,k}, v_{\rho, j} \right)_{H_0^1(\Omega)}\\
&= 128 \log^2(\rho)\sum_{k=1}^m \lambda_k^2 (1+o(1)),
\end{split}
\end{equation}

Since by \eqref{eq4} we have
$$\rho^2 \int_{\Omega} \left(e^{u_{\rho}(x)} +e^{-u_{\rho}(x)} \right) U_k(x)^2(x) \psi_{\rho,k}^2(x) dx = 128 \log^2(\rho)(1+o(1)),$$
by \eqref{limitato}, \eqref{xxx}, and since by hypothesis $\mu_{\rho,j} < -\frac{1}{2\log(\rho)}$, we have
\begin{align*}
&a^2_{\rho,k}\rho^2 \int_{\Omega} \left(e^{u_{\rho}(x)} +e^{-u_{\rho}(x)} \right) v_{\rho, k}^2(x)  dx\\
& < - 8 \pi \frac{1}{2\log(\rho)} \log^2(\rho) o(1) \left( \sum_{j=1}^m \left(C_k^{(j)} \right)^2 +o(1) \right)\\
&=o(\log(\rho)),
\end{align*}
and by \eqref{xx} and \eqref{xxx}, we have
$$\frac{a_{\rho,j}}{\mu_{\rho,j}}\left( \sum_{k=1}^m \lambda_k U_k \psi_{\rho,k}, v_{\rho,j} \right)_{H_0^1(\Omega)}=o(\log^2(\rho)).$$

Using \eqref{app1} and \eqref{app2} in \eqref{app} we get 
$$\mu_{\rho,m} \leqslant \frac{-32\log(\rho)\sum_{k=1}^m \lambda_k^2 (1+o(1))}{128\log^2(\rho)\sum_{k=1}^m \lambda_k^2 (1+o(1))}<-\frac{1}{2 \log(\rho)}.$$

Since $\mu_{\rho, j} \to 0$ for every $j=1, \ldots, m$, proceeding as in the beginning of this proof we have that $\tilde{v}^{(k)}_j \to C^{(k)}_j$ in $C^2_{\mathrm{loc}}(\R^2)$ per every $k,j = 1, \ldots, m$.

Let us finally prove that $\mu_{\rho,i} \nrightarrow 0$ for every $i >m$.
Let $i$ and $j$ be such that $\mu_{\rho,i} \to 0$ and $\mu_{\rho,j} \to 0$, and let $C_j=(C_j^{(1)}, \ldots, C_j^{(n)})$.

By the orthogonality of the eigenfunctions we get
$$0=(v_{\rho,i} , v_{\rho,j})_{H_0^1(\Omega)}= \mu_{\rho,i} \rho^2 \int_{\Omega}\left(e^{u_{\rho}(x)} +e^{-u_{\rho}(x)} \right) v_{\rho,i}(x)v_{\rho,j}(x) dx,$$
so that 
\begin{align*}
0&=\rho^2 \int_{\Omega}\left(e^{u_{\rho}(x)} +e^{-u_{\rho}(x)} \right) v_{\rho,i}(x)v_{\rho,j}(x) dx\\
&=\sum_{k=1}^m \frac{\tau_k^2 \rho^4}{8} \frac{1}{\tau_k^4 \rho^4} \int_{B_{\frac{\sqrt{8}\epsilon}{\tau_k \rho}}(0)} \frac{e^{\ell_{\rho,k}\left(\frac{\tau_k \rho x}{\sqrt{8}}+\xi_{\rho,k} \right)+\varphi_{\rho} \left(\frac{\tau_k \rho x}{\sqrt{8}}+\xi_{\rho,k} \right)}}{\left(1+\frac{\vert x \vert^2}{8}\right)^2} \tilde{v}^{(k)}_{\rho,i}(x)\tilde{v}^{(k)}_{\rho,j}(x) dx+\\
&+\sum_{k=1}^m \frac{\tau_k^2 \rho^4}{8} \tau_k^4\rho^4\\
& \int_{B_{\frac{\sqrt{8}\epsilon}{\tau_k \rho}}(0)} e^{-\ell_{\rho,k}\left(\frac{\tau_k \rho x}{\sqrt{8}}+\xi_{\rho,k} \right)-\varphi_{\rho} \left(\frac{\tau_k \rho x}{\sqrt{8}}+\xi_{\rho,k} \right)}\left(1+\frac{\vert x \vert^2}{8}\right)^2 \tilde{v}^{(k)}_{\rho,i}(x)\tilde{v}^{(k)}_{\rho,j}(x) dx+o(1)\\
&\hbox{using Lebesgue theorem}\\
&= 8\pi (C_i, C_j)_{\R^m}+ o(1).
\end{align*}
Therefore $(C_i,C_j)_{\R^m}=0$. Since in $\R^m$ there are at most $m$ orthogonal vectors, then there are at most $m$ eigenvalues which go to zero. Since in 1 we proved that the first $m$ eigenvalues go to zero, we get the conclusion.
\endproof
The next theorem gives an estimate for the asymptotic behaviour of the first $m$ eigenfunctions away from the blow-up points.
\begin{lem} \label{t4}
For every $1\leqslant j \leqslant m$ we have
$$\lim_{\rho \to 0} \frac{v_{\rho,j}(x)}{\mu_{\rho,j}} = 8 \pi \sum_{k=1}^{m} C_j^{(k)} G(x, \xi_{k})$$
in $C^1_{\mathrm{loc}}(\bar{\Omega} \setminus \lbrace \xi_1, \cdots, \xi_m \rbrace)$.
\end{lem}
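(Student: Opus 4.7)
The plan is to exploit the Green's representation formula for $v_{\rho,j}/\mu_{\rho,j}$, localize the mass of the right-hand side at each blow-up point by the standard rescaling used throughout the paper, and identify the limit through Lebesgue dominated convergence.

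First I would write, using \eqref{autovvv},
$$\frac{v_{\rho,j}(x)}{\mu_{\rho,j}} \;=\; \rho^2 \int_{\Omega} G(x,y)\bigl(e^{u_{\rho}(y)}+e^{-u_{\rho}(y)}\bigr)\,v_{\rho,j}(y)\,dy,$$
and decompose the integral as $J_1(x)+\sum_{k=1}^{m}J_2^{(k)}(x)$, where $J_1$ is taken over $\Omega\setminus\bigcup_{k}B_{\epsilon}(\xi_{\rho,k})$ and $J_2^{(k)}$ over $B_{\epsilon}(\xi_{\rho,k})$. On the set underlying $J_1$, Theorem \ref{ang} and Theorem \ref{unoo} give uniform bounds for $u_\rho$, hence for $e^{\pm u_\rho}$; combined with $\|v_{\rho,j}\|_{L^\infty}=1$ and the fact that $\sup_{x\in K}\int_{\Omega}|G(x,y)|\,dy$ is finite for any compact $K\subset\bar{\Omega}\setminus\{\xi_1,\ldots,\xi_m\}$, this yields $J_1(x)=O(\rho^2)$ uniformly on $K$.

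For each $J_2^{(k)}$, I would perform the change of variable $y=\frac{\tau_k\rho z}{\sqrt{8}}+\xi_{\rho,k}$. Using the expansions from the Preliminaries,
$$\rho^{2}\bigl(e^{\pm u_{\rho}}\bigr)\!\left(\tfrac{\tau_k\rho z}{\sqrt 8}+\xi_{\rho,k}\right)\cdot\tfrac{\tau_k^{2}\rho^{2}}{8}\;=\;\frac{1+o(1)}{(1+|z|^{2}/8)^{2}}\;+\;O\!\left(\rho^{8}(1+|z|^{2}/8)^{2}\right),$$
(the two $\pm$ cases swap which term is dominant, but the sum always produces the displayed dominant $\alpha_k$-independent profile). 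Since for $x\in K$ and $\rho$ small enough $K$ is disjoint from $B_\epsilon(\xi_{\rho,k})$, the Green function $G(x,\tfrac{\tau_k\rho z}{\sqrt 8}+\xi_{\rho,k})$ is uniformly bounded in both $x$ and $z$, and converges to $G(x,\xi_k)$ pointwise in $z$ and uniformly in $x\in K$. The integrand is therefore dominated by $C(1+|z|^2/8)^{-2}$, which is integrable on $\R^{2}$. By Proposition \ref{t3}, $\tilde v_{\rho,j}^{(k)}\to C_{j}^{(k)}$ in $C^{2}_{\mathrm{loc}}(\R^{2})$, so Lebesgue's theorem and $\int_{\R^{2}}(1+|z|^{2}/8)^{-2}\,dz=8\pi$ yield
$$J_2^{(k)}(x)\;\longrightarrow\;8\pi\,C_{j}^{(k)}\,G(x,\xi_k),$$
and summing gives the claimed pointwise limit on $K$.

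To upgrade pointwise convergence to $C^{1}_{\mathrm{loc}}(\bar{\Omega}\setminus\{\xi_1,\ldots,\xi_m\})$, I would apply standard elliptic $C^{1,\sigma}$ estimates to $v_{\rho,j}/\mu_{\rho,j}$ on a compact neighbourhood of $K$: the equation $-\Delta(v_{\rho,j}/\mu_{\rho,j})=\rho^{2}(e^{u_\rho}+e^{-u_\rho})v_{\rho,j}$ has a right-hand side that is $O(\rho^{2})$ in $L^{\infty}$ there, while the $L^\infty$ bound of $v_{\rho,j}/\mu_{\rho,j}$ on $K$ follows from the Green representation itself. Ascoli--Arzel\`a plus uniqueness of the pointwise limit then promotes convergence to $C^{1}_{\mathrm{loc}}$. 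The main technical point, and the one requiring care, is the construction of the uniform integrable majorant in the rescaled integral: one has to verify that the rescaled points $\tfrac{\tau_k\rho z}{\sqrt 8}+\xi_{\rho,k}$ stay strictly separated from $x\in K$ for all $|z|\le \sqrt 8\epsilon/(\tau_k\rho)$ and all small $\rho$, which is guaranteed by the fixed separation $\epsilon$ between $K$ and each $\xi_k$ together with $\xi_{\rho,k}\to\xi_k$.
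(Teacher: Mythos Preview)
Your proof is correct and follows essentially the same route as the paper: Green's representation for $v_{\rho,j}/\mu_{\rho,j}$, localization to the balls $B_\epsilon(\xi_{\rho,k})$, rescaling, and Lebesgue dominated convergence to extract the constant $8\pi C_j^{(k)}$. The only cosmetic differences are that the paper isolates $G(x,\xi_k)$ and controls the remainder $G(x,y)-G(x,\xi_k)$ via an intermediate scale $\lambda=\rho^{\gamma}$ (rather than keeping $G$ inside and using its uniform bound on $K\times B_\epsilon(\xi_{\rho,k})$ as you do), and it obtains the $C^1$ convergence by rerunning the same argument with $\partial G/\partial x_i$ in place of $G$, instead of your elliptic-estimates-plus-Ascoli step.
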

\proof
By Proposition \ref{t3} we know that $\tilde{v}_{\rho,j}^{(k)} \to C_j^{(k)}$ in $C^2_{\mathrm{loc}}(\R^2)$ for every $j,k=1, \ldots,m$.
For each $x \in \bar{\Omega} \setminus \lbrace \xi_1, \ldots, \xi_m \rbrace$, using Green's representation formula we get
\begin{equation} \label{y0}
\begin{split}
\frac{v_{\rho,j}(x)}{\mu_{\rho,j}}&= \rho^2 \int_{\Omega} \left( e^{u_{\rho}(y)} +e^{-u_{\rho}(y)} \right) v_{\rho,j}(y) G(x,y) dy\\
&=\sum_{k=1}^m G(x,\xi_{k}) \rho^2 \int_{B_{\epsilon}(\xi_{\rho,k})} \left( e^{u_{\rho}(y)} +e^{-u_{\rho}(y)} \right) v_{\rho,j}(y) dy+\\
&+\sum_{k=1}^m \rho^2 \int_{B_{\epsilon}(\xi_{\rho,k})} \left( e^{u_{\rho}(y)} +e^{-u_{\rho}(y)} \right) v_{\rho,j}(y) (G(x,y)-G(x, \xi_{k})) dy + o(1).\\
\end{split}
\end{equation}
Let us study the integrals in the sums separately. For the first integral we have
\begin{equation} \label{y}
\begin{split}
&\rho^2 \int_{B_{\epsilon}(\xi_{\rho,k})} \left( e^{u_{\rho}(y)} +e^{-u_{\rho}(y)} \right) v_{\rho,j}(y) dy \\
&=\rho^2 \frac{\tau_k^2 \rho^2}{8} \frac{1}{\tau_k^4 \rho^4} \int_{B_{\frac{\sqrt{8} \epsilon}{\tau_k \rho}}(0)} \frac{e^{\ell_{\rho,k}\left(\frac{\tau_k \rho x}{\sqrt{8}}+\xi_{\rho,k} \right)+ \varphi\left(\frac{\tau_k \rho x}{\sqrt{8}}+\xi_{\rho,k} \right)}}{\left(1+\frac{\vert x \vert^2}{8}\right)^2} \tilde{v}^{(k)}_{\rho,j}(x) dx +\\
&+ \rho^2 \frac{\tau_k^2 \rho^2}{8} \tau_k^4 \rho^4 \int_{B_{\frac{\sqrt{8} \epsilon}{\tau_k \rho}}(0)} e^{-\ell_{\rho,k}\left(\frac{\tau_k \rho x}{\sqrt{8}}+\xi_{\rho,k} \right)- \varphi\left(\frac{\tau_k \rho x}{\sqrt{8}}+\xi_{\rho,k} \right)}\left(1+\frac{\vert x \vert^2}{8}\right)^2 \tilde{v}^{(k)}_{\rho,j}(x) dx\\
&\hbox{using Lebesgue theorem}\\
&=8 \pi C_j^{(k)}+o(1).
\end{split}
\end{equation}
Let us now move to the second integral. For every compact subset $K \subset \bar{\Omega} \setminus \lbrace \xi_{\rho,1}, \ldots, \xi_{\rho,m} \rbrace$ and for every $k=1,\ldots,m$, there exists $\epsilon'_k$ such that, for $\rho$ small enough, we have
$$\mathrm{dist}(K,\xi_{\rho,k})= \inf_{x \in K} \vert x-\xi_{\rho,k} \vert = \epsilon'_{\rho,k}>\epsilon'_k.$$ 
Then, for each $\lambda <\min(\epsilon, \epsilon'_1, \ldots, \epsilon'_m)$, if $x \in K$ then $x \notin B_{\lambda}(\xi_{\rho,k})$. Therefore
\begin{align*}
&\rho^2 \int_{B_{\epsilon}(\xi_{\rho,k})} \left( e^{u_{\rho}(y)} +e^{-u_{\rho}(y)} \right) v_{\rho,j}(y) (G(x,y)-G(x, \xi_{k})) dy \\
&=\rho^2 \int_{B_{\epsilon}(\xi_{\rho,k}) \setminus B_{\lambda}(\xi_{\rho,k})} \left( \frac{e^{\ell_{\rho,k}(y) + \varphi_{\rho}(y)}}{\left(\tau_k^2 \rho^2 + \vert y-\xi_{\rho,k}\vert^2\right)^2} +e^{-\ell_{\rho,k}(y) - \varphi_{\rho}(y)}\left(\tau_k^2 \rho^2 + \vert y-\xi_{\rho,k}\vert^2 \right)^2 \right)\\
& v_{\rho,j}(y) (G(x,y)-G(x, \xi_{k})) dy + \\
&+\rho^2 \int_{B_{\lambda}(\xi_{\rho,k})} \left( e^{u_{\rho}(y)} +e^{-u_{\rho}(y)} \right) v_{\rho,j}(y) (G(x,y)-G(x, \xi_{k})) dy. \\
\end{align*}

For the first of these integrals we have
\begin{align*}
&\rho^2 \Bigg| \int_{B_{\bar{\epsilon}}(\xi_{\rho,k}) \setminus B_{\lambda}(\xi_{\rho,k})} \left( \frac{e^{\ell_{\rho,k}(y) + \varphi_{\rho}(y)}}{\left(\tau_k^2 \rho^2 + \vert y-\xi_{\rho,k}\vert^2\right)^2} +e^{-\ell_{\rho,k}(y) - \varphi_{\rho}(y)}\left(\tau_k^2 \rho^2 + \vert y-\xi_{\rho,k}\vert^2 \right)^2 \right)\\
& v_{\rho,j}(y) (G(x,y)-G(x, \xi_{k})) dy \Bigg|\\
&\leqslant \left( \frac{C \rho^2}{(\rho^2 + \lambda^2)^2} + C \rho^2 \right)  \int_{B_{\bar{\epsilon}}(\xi_{\rho,k}) \setminus B_{\lambda}(\xi_{\rho,k})} \vert G(x,y) - G(x,\xi_{k}) \vert dy \\
&\hbox{since $G(x,\cdot) \in L^1(\Omega)$}\\
&\leqslant C\left( \frac{\rho^2}{(\rho^2 + \lambda^2)^2} + \rho^2  \right).
\end{align*}

Choosing $\lambda= \rho^{\gamma}$ with $\gamma < \frac{1}{2}$, this integral converges to zero, uniformly in $x \in K$.

Let us now study the second integral.
\begin{align*}
&\rho^2 \left| \int_{B_{\lambda}(\xi_{\rho,k})} \left( e^{u_{\rho}(y)} +e^{-u_{\rho}(y)} \right) v_{\rho,j}(y) (G(x,y)-G(x, \xi_{k})) dy \right| \\
&\leqslant \sup_{y \in B(\lambda)(\xi_{\rho,k})} \vert (G(x,y)-G(x, \xi_{k}))\vert \rho^2 \int_{B_{\epsilon}(\xi_{\rho,k})} \left( e^{u_{\rho}(y)} +e^{-u_{\rho}(y)} \right) dy\\
&\hbox{using the uniform continuity of $G$ in $K \times B_{\epsilon'}(\xi_{\rho,k})$ and \eqref{y}}\\
&=o(1).
\end{align*}
Therefore 
\begin{equation} \label{y1}
\rho^2 \int_{B_{\bar{\epsilon}}(\xi_{\rho,k})} \left( e^{u_{\rho}(y)} +e^{-u_{\rho}(y)} \right) v_{\rho,j}(y) (G(x,y)-G(x, \xi_{k})) dy =o(1).
\end{equation}
Using \eqref{y} and \eqref{y1} in \eqref{y0} we get
$$\frac{v_{\rho,j}(x)}{\mu_{\rho,j}} = 8 \pi \sum_{k=1}^{m}C_j^{(k)}G(x,\xi_{\rho,k}) + o(1)$$
in $C^0(\bar{\Omega} \setminus \lbrace \xi_1, \ldots, \xi_m \rbrace)$.
Using the fact that
$$\frac{\partial v_{\rho,j}}{\partial x_i} = \rho^2 \int_{\Omega} \left(e^{u_{\rho}(y)} + e^{-u_{\rho}(y)} \right) v_{\rho, j} (y) \frac{\partial G}{\partial x_i}(x,y) dy,$$
and proceeding as above, we get the estimate in $C^1(\bar{\Omega} \setminus \lbrace \xi_1, \ldots, \xi_m \rbrace)$.
\endproof
\proof[Proof of Theorem \ref{main1}] 
Point $1$ of Proposition \ref{t3} proves $1$. Point $2$ of Proposition \ref{t3} proves that
$$\lim_{\rho \to 0} \tilde{v}^{(k)}_{\rho,j} = C^{(k)}_j,$$
while Proposition \ref{t1} proves $C^{(k)}_j \neq  0$ for some $k \in \lbrace 1, \ldots,m\rbrace$. Finally, Lemma \ref{t4} proves $3$.
\endproof
\section{Eigenvalues from $m+1$ to $4m$}

In this chapter, we will consider the eigenvalues and the eigenfunctions from $m+1$ to $4m$. We will prove that such eigenvalues go to one as $\rho$ approaches to zero. Being interested in the eigenvalues smaller than one, we will need to determine if they go to one from below or from above. For this purpose, we will study the asymptotic behaviour of the eigenfunctions under different conditions on the behaviour of the eigenfunctions themselves, rescaled around the blow-up points $\lbrace \xi_{1}, \ldots, \xi_{m} \rbrace$. Finally, we will prove that the eigenvalues from $3m+1$ to $4m$ go to one from above. This gives that the Morse Index is smaller or equal than $3m$.

We begin by studying the asymptotic behaviour of the eigenfunctions rescaled around the blow-up points, under the assumption that the relative eigenvalues are smaller than $1+o(1)$. This condition will be proved to hold for every eigenvalue from $m+1$ to $4m$.

We prove the following:

\begin{prop} \label{uno1}
For every $j>m$, if $\mu_{\rho,j} \leqslant 1+o(1)$ then $\mu_{\rho,j} \to 1$, and
\begin{equation} \label{auto}
\tilde{v}_{\rho,j}^{(k)}(x) \to \tilde{v}_{j}^{(k)}(x) = \frac{s_{1,j}^{(k)} x_1 + s_{2,j}^{(k)} x_2}{8+\vert x \vert^2} + t_j^{(k)} \frac{8-\vert x \vert^2}{8+ \vert x \vert^2},
\end{equation}

in $C^2_{\mathrm{loc}}(\R^2)$, and there exists $k\in \lbrace 1, \ldots, m \rbrace$ such that $(s_{1,j}^{(k)}, s_{2,j}^{(k)}, t_j^{(k)}) \neq (0,0,0)$.
\end{prop}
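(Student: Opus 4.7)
The approach is to mimic the derivation of the limiting equation in Proposition \ref{t1} and then to classify bounded solutions of that limiting equation on $\R^2$. Fix $j>m$ and extract a subsequence $\rho_n\to 0$ along which $\mu_{\rho_n,j}\to\mu_j\in[0,1]$; this is possible since positivity of the weight $e^{u_\rho}+e^{-u_\rho}$ forces $\mu_{\rho,j}>0$, while the hypothesis gives the upper bound. Repeating verbatim the rescaling computation of Proposition \ref{t1}, using the asymptotics of $e^{\pm u_\rho}$ recalled at the end of Section 2, the rescaled functions satisfy
\[
-\Delta\tilde v_{\rho_n,j}^{(k)}=\mu_{\rho_n,j}\Bigl[\tfrac{1}{(1+|x|^2/8)^2}+o(1)\Bigr]\tilde v_{\rho_n,j}^{(k)}
\]
on balls exhausting $\R^2$, with $\|\tilde v_{\rho_n,j}^{(k)}\|_{L^\infty}\le 1$. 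Elliptic regularity then produces, along a further subsequence, $C^2_{\mathrm{loc}}(\R^2)$ convergence to a bounded limit $\tilde v_j^{(k)}$ solving
\[
-\Delta\tilde v_j^{(k)}=\mu_j\,\tfrac{1}{(1+|x|^2/8)^2}\,\tilde v_j^{(k)}\quad\text{in }\R^2,
\]
and the same $L^\infty$-normalization argument used in the second part of Proposition \ref{t1} (via \cite[Proposition 2.11]{GF}) forces at least one of these limits to be non-trivial.

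I would then classify bounded solutions of the limit equation. Under the change of variables $y=x/\sqrt 8$, it becomes the linearized Liouville problem $-\Delta_y w=8\mu_j(1+|y|^2)^{-2}w$ on $\R^2$, whose bounded eigenvalues, obtained classically by stereographic projection to $S^2$ and decomposition into spherical harmonics, are exactly $\mu_j=\tfrac{k(k+1)}{2}$, $k\in\N$. The $\mu_j=0$ eigenspace is the constants and the $\mu_j=1$ eigenspace is three-dimensional, spanned by $\tfrac{1-|y|^2}{1+|y|^2}$, $\tfrac{y_1}{1+|y|^2}$, $\tfrac{y_2}{1+|y|^2}$. Under the hypothesis $\mu_j\le 1$, only the values $\mu_j\in\{0,1\}$ are admissible.

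To exclude $\mu_j=0$, I invoke the orthogonality argument from the end of the proof of Proposition \ref{t3}: if $\mu_j=0$ then $\tilde v_j^{(k)}$ is a constant $C_j^{(k)}$ at every $k$, and the vectors $(C_j^{(1)},\dots,C_j^{(m)})\in\R^m$ associated to distinct eigenvalues tending to zero are mutually orthogonal, so at most $m$ such eigenvalues can occur; since $j>m$, non-triviality is contradicted. Hence $\mu_j=1$ and $\tilde v_j^{(k)}$ lies in the three-dimensional kernel, which in the original $x$-coordinate is spanned by $\tfrac{x_1}{8+|x|^2}$, $\tfrac{x_2}{8+|x|^2}$ and $\tfrac{8-|x|^2}{8+|x|^2}$. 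This yields \eqref{auto} and the existence of some $k$ at which $(s_{1,j}^{(k)},s_{2,j}^{(k)},t_j^{(k)})\neq(0,0,0)$. Since every subsequence of $\rho\to 0$ admits a further subsequence with the same limit $\mu_j=1$, we conclude $\mu_{\rho,j}\to 1$ along the full family.

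The main obstacle is the rigorous classification of the bounded spectrum of the limit equation: this is classical Liouville-type material, but must be cited carefully (of the same type used in \cite{esp} and related works). A secondary but routine concern is that the coefficients $s_{1,j}^{(k)},s_{2,j}^{(k)},t_j^{(k)}$ produced by this subsequence argument a priori depend on the extracted subsequence; since the statement only asserts their existence at some $k$, this is harmless here, although it will matter when those coefficients are pinned down in the subsequent sections.
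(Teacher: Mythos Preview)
Your proposal is correct and follows essentially the same approach as the paper's own proof: invoke Proposition \ref{t1} for $C^2_{\mathrm{loc}}$ convergence to a bounded solution of the limiting equation, classify the bounded spectrum of that equation (the paper outsources this to \cite[Lemma 4.3]{elg} while you sketch the stereographic-projection argument), and use point 3 of Proposition \ref{t3} to rule out $\mu_j=0$ for $j>m$. Your extra care about subsequences and the remark on possible subsequence-dependence of the coefficients are sound observations that the paper leaves implicit.
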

\proof
In Proposition \ref{t1} we proved that $\mu_{\rho,j}\to \mu_{j}$, where $\mu_j$ is an eigenvalue of \eqref{eqlim}. The proof in \cite[Lemma 4.3]{elg}, suitably modified, shows that the first eigenvalues of \eqref{eqlim} are $0$ and $1$. By assumption $\mu_{j}\leqslant1$, and by Proposition
 \ref{t3} we have that $\mu_{j} \neq 0$ for every $j>m$, so that $\mu_j=1$. In \cite[Lemma 4.3]{elg} is proved that the eigenfunctions for the eigenvalue $1$ of \eqref{eqlim} are of the form \eqref{auto}. Finally, by Proposition \ref{t1} $\tilde{v}_{\rho, j}^{(k)}$ can not be zero for all $k$. 
\endproof

Let us now prove the following lemma.

\begin{lem} \label{lemma2}
For $j$ such that $\mu_{\rho,j} \to 1$, we have that $v_{\rho,j}(x) \to 0$ in $C^1_{\mathrm{loc}}( \bar{\Omega} \setminus \lbrace \xi_1, \ldots, \xi_m \rbrace)$.
\end{lem}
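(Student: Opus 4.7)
The plan is to mimic the argument of Lemma \ref{t4} via Green's representation, the crucial new ingredient being that, with $\tilde v_j^{(k)}$ in the form prescribed by Proposition \ref{uno1}, the ``mass'' of the eigenfunction concentrated at each $\xi_{\rho,k}$ vanishes in the limit. Concretely, write
\begin{equation*}
v_{\rho,j}(x)=\mu_{\rho,j}\rho^2\int_\Omega\bigl(e^{u_\rho(y)}+e^{-u_\rho(y)}\bigr)v_{\rho,j}(y)\,G(x,y)\,dy
\end{equation*}
and split the right-hand side exactly as in \eqref{y0}: the ``core'' part $\sum_k G(x,\xi_k)\,\mu_{\rho,j}\rho^2\int_{B_\epsilon(\xi_{\rho,k})}(e^{u_\rho}+e^{-u_\rho})v_{\rho,j}\,dy$, the ``Green-difference'' part $\sum_k \mu_{\rho,j}\rho^2\int_{B_\epsilon(\xi_{\rho,k})}(e^{u_\rho}+e^{-u_\rho})v_{\rho,j}(G(x,y)-G(x,\xi_k))\,dy$, and the contribution from $\Omega\setminus\bigcup_k B_\epsilon(\xi_{\rho,k})$ (which is $o(1)$ uniformly for $x$ in compact subsets of $\bar\Omega\setminus\{\xi_1,\dots,\xi_m\}$).

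The core part is the new step. Rescaling around $\xi_{\rho,k}$ as in \eqref{y}, using the expansions listed in Section 2 and the exponential smallness of $e^{-u_\rho}$ (or $e^{u_\rho}$) after rescaling, the integral $\rho^2\int_{B_\epsilon(\xi_{\rho,k})}(e^{u_\rho}+e^{-u_\rho})v_{\rho,j}\,dy$ equals, up to a multiplicative constant depending on $\tau_k$ and $\ell_k$,
\begin{equation*}
\int_{\R^2}\frac{1}{\bigl(1+|x|^2/8\bigr)^2}\,\tilde v_j^{(k)}(x)\,dx+o(1),
\end{equation*}
by Lebesgue's dominated convergence theorem (the dominating bound is provided by the standard $L^1$ majorant $(1+|x|^2/8)^{-2}$ together with $\|\tilde v_{\rho,j}^{(k)}\|_\infty\le 1$). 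By Proposition \ref{uno1}, $\tilde v_j^{(k)}(x)=\frac{s_{1,j}^{(k)}x_1+s_{2,j}^{(k)}x_2}{8+|x|^2}+t_j^{(k)}\frac{8-|x|^2}{8+|x|^2}$. The first summand is odd and thus integrates to zero against the radial weight; for the second, a direct computation via the substitution $u=|x|^2/8$ yields $\int_0^\infty\frac{1-u}{(1+u)^3}\,du=0$. Hence the limit integral vanishes, so the core part is $o(1)$ uniformly on compact subsets away from the $\xi_k$.

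The Green-difference part is then handled verbatim as in Lemma \ref{t4}: split the integration domain at radius $\rho^\gamma$ with $\gamma<1/2$, use the bound $G(x,\cdot)\in L^1$ together with $|G(x,y)-G(x,\xi_k)|\to 0$ uniformly in $x$ on compacts by the uniform continuity of $G$ off the diagonal, and the fact that $\rho^2\int_{B_\epsilon(\xi_{\rho,k})}(e^{u_\rho}+e^{-u_\rho})\,dy=O(1)$. The outer integral is obviously $o(1)$ since on $\Omega\setminus\bigcup B_\epsilon(\xi_{\rho,k})$ both $e^{\pm u_\rho}$ are uniformly bounded (by Theorem \ref{ang}). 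This gives $v_{\rho,j}\to 0$ in $C^0_{\mathrm{loc}}(\bar\Omega\setminus\{\xi_1,\dots,\xi_m\})$. For $C^1_{\mathrm{loc}}$ convergence, differentiate Green's formula, writing $\partial_{x_i}v_{\rho,j}=\mu_{\rho,j}\rho^2\int_\Omega(e^{u_\rho}+e^{-u_\rho})v_{\rho,j}\,\partial_{x_i}G(x,y)\,dy$, and repeat the identical splitting; $\partial_{x_i}G(x,\cdot)$ is still in $L^1(\Omega)$ with uniform continuity in $x$ away from the diagonal, so the same estimates go through.

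The only delicate step is the vanishing of the core integral; everything else is a transparent adaptation of Lemma \ref{t4}. The main conceptual obstacle is therefore recognizing that the particular form of the limit eigenfunctions in Proposition \ref{uno1} is precisely what makes the local mass vanish — both the dipole part (by odd symmetry) and the conformal mode $(8-|x|^2)/(8+|x|^2)$ (by a direct orthogonality identity) integrate to zero against the weight $(1+|x|^2/8)^{-2}$.
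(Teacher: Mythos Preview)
Your proposal is correct and follows essentially the same approach as the paper's own proof: Green's representation formula, the same splitting into core, Green-difference, and outer parts, the rescaling/Lebesgue argument showing the core integral $\int_{\R^2}(1+|x|^2/8)^{-2}\tilde v_j^{(k)}\,dx$ vanishes for the limit profiles of Proposition \ref{uno1}, and the verbatim repetition of the Lemma \ref{t4} estimates for the remaining pieces and for the $C^1$ upgrade. Your write-up is in fact slightly more explicit than the paper's in justifying why both the odd part and the conformal mode integrate to zero, but the argument is the same.
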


\proof
Since $\mu_j \to 1$, by Proposition \ref{uno1} we have that
$$\tilde{v}_{\rho,j}^{(k)}(x) \to \tilde{v}_{j}^{(k)}(x) = \frac{s_{1,j}^{(k)} x_1 + s_{2,j}^{(k)} x_2}{8+\vert x \vert^2} + t_j^{(k)} \frac{8-\vert x \vert^2}{8+ \vert x \vert^2}$$
in $C^2_{\mathrm{loc}}(\R^2)$. Let $K \subset \bar{\Omega} \setminus \lbrace \xi_1, \ldots, \xi_m \rbrace$ be compact. Then, by Green's representation formula, for each $x \in K$ we have that
\begin{align*}
\frac{v_{\rho,j}}{\mu_{\rho,j}}(x) &= \rho^2 \int_{\Omega} \left(e^{u_{\rho}(y)} + e^{-u_{\rho}(y)} \right) v_{\rho, j} (y) G(x,y) dy\\
&=\sum_{k=1}^m G(x,\xi_k) \rho^2 \int_{B_{\epsilon}(\xi_{\rho,k})} \left(e^{u_{\rho}(y)} + e^{-u_{\rho}(y)} \right) v_{\rho, j} (y) dy +\\
&+ \sum_{k=1}^m \rho^2 \int_{B_{\epsilon}(\xi_{\rho,k})} \left(e^{u_{\rho}(y)} + e^{-u_{\rho}(y)} \right) v_{\rho, j} (y) \left( G(x,y) - G(x,\xi_k) \right) dy + o(1)\\
&=o(1)
\end{align*}
in $C_{\mathrm{loc}}(\bar{\Omega} \setminus \lbrace \xi_1, \ldots, \xi_m \rbrace)$.

In fact, for the fist integral we have
\begin{align*}
&\rho^2 \int_{B_{\epsilon}(\xi_{\rho,k})} \left(e^{u_{\rho}(x)} + e^{-u_{\rho}(x)} \right) v_{\rho, j} (x) dy\\
&=\rho^2 \frac{\tau_k^2 \rho^2}{8} \frac{1}{\tau_k^4 \rho^4}\int_{B_{\frac{\sqrt{8}\epsilon}{\tau_k \rho}}(0)} \frac{e^{\ell_{\rho,k} \left(\frac{\tau_k \rho x}{\sqrt{8}} +\xi_{\rho,k}\right) + \varphi_{\rho}\left(\frac{\tau_k \rho x}{\sqrt{8}} +\xi_{\rho,k}\right)}}{\left(1+\frac{\vert x \vert^2}{8} \right)^2} \tilde{v}^{(k)}_{\rho, j} (x) dy+\\
&+\rho^2 \frac{\tau_k^2 \rho^2}{8}\tau_k^4 \rho^4\int_{B_{\frac{\sqrt{8}\epsilon}{\tau_k \rho}}(0)} e^{-\ell_{\rho,k} \left(\frac{\tau_k \rho x}{\sqrt{8}} +\xi_{\rho,k}\right) - \varphi_{\rho}\left(\frac{\tau_k \rho x}{\sqrt{8}} +\xi_{\rho,k}\right)}\left(1+\frac{\vert x \vert^2}{8} \right)^2 \tilde{v}^{(k)}_{\rho, j} (x) dy\\
&\hbox{using Lebesgue theorem}\\
&=\int_{\R^2} \frac{1}{\left(1+ \frac{\vert x \vert^2}{8} \right)^2} \left( \frac{s_{1,j}^{(k)} x_1 + s_{2,j}^{(k)} x_2}{8+\vert x \vert^2} + t_j^{(k)} \frac{8-\vert x \vert^2}{8+ \vert x \vert^2} \right) dx + o(1)\\
&=o(1).
\end{align*}

For the second integral, proceeding as in the proof of Lemma \ref{t4}, for every $\lambda <\min\lbrace \inf_{\rho,k} \mathrm{dist}(K, \xi_{\rho,k}), \epsilon \rbrace$ we have
\begin{align*}
&\rho^2 \left| \int_{B_{\epsilon}(\xi_{\rho,k})} \left(e^{u_{\rho}(y)} + e^{-u_{\rho}(y)} \right) v_{\rho, j} (y) \left( G(x,y) - G(x,\xi_k) \right) dy \right|\\
&\leqslant C\left(\frac{\rho^2}{\left(\rho^2 + \lambda^2 \right)^2} + \rho^2 \right) \int_{B_{\epsilon}(\xi_{\rho,k}) \setminus B_{\lambda}(\xi_{\rho,k})} \vert G(x,y)-G(x,\xi_{k}) \vert dy +\\
&+ \sup_{y \in B_{\lambda}(\xi_{\rho,k})} \vert G(x,y) - G(x,\xi_k) \vert \rho^2\int_{B_{\lambda}(\xi_{\rho,k})}  \left(e^{u_{\rho}(y)} + e^{-u_{\rho}(y)} \right) dy\\
&\hbox{using the uniform continuity of $G(x,\cdot)$ in $K$, and taking $\lambda=\rho^{\gamma}$ with $\gamma < \frac{1}{2}$}\\
&=o(1)
\end{align*}
in $C_{\mathrm{loc}}(\bar{\Omega} \setminus \lbrace \xi_1, \ldots, \xi_m \rbrace)$.
Using the fact that
$$\frac{\partial v_{\rho,j}}{\partial x_i}(x) = \rho^2 \int_{\Omega} \left(e^{u_{\rho}(y)} + e^{-u_{\rho}(y)} \right) v_{\rho, j} (y) \frac{\partial G}{\partial x_i}(x,y) dy,$$
and proceeding as above, we get the estimate in $C^1_{\mathrm{loc}}(\bar{\Omega} \setminus \lbrace \xi_1, \ldots, \xi_m \rbrace)$.
We remark that this also proves that for every $j$ we have
\begin{equation} \label{stima1}
v_{\rho,j}(x) = \mu_{\rho,j} \sum_{k=1}^m G(x,\xi_{k}) \rho^2 \int_{B_{\epsilon}(\xi_{\rho,k})} \left(e^{u_{\rho}(x)} + e^{-u_{\rho}(x)}\right) v_{\rho,j}(x) dx +o(1)
\end{equation}
in $C_{\mathrm{loc}}^1(\bar{\Omega} \setminus \lbrace \xi_1, \ldots, \xi_m \rbrace)$.
\endproof

Let us now study the behaviour of the eigenfunctions away from the blow-up points, under a condition that will be proved to select the eigenfunctions from $3m+1$ to $4m$.
\begin{lem} \label{due2}
If $\mu_{\rho,j} \to 1$ and $t_j=(t_j^{(1)}, \ldots, t_j^{(m)}) \neq (0, \ldots, 0)$ then
$$\log(\rho) v_{\rho,j}(x) \to 2 \pi \sum_{k=1}^m t_j^{(k)} G(x,\xi_{k})$$
in $C^1_{\mathrm{loc}}(\bar{\Omega} \setminus \lbrace \xi_{1}, \ldots, \xi_{m} \rbrace)$.
\end{lem}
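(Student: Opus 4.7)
The plan is to start from formula \eqref{stima1} in the proof of Lemma~\ref{lemma2}, namely
\[
v_{\rho,j}(x) = \mu_{\rho,j}\sum_{k=1}^{m}G(x,\xi_{k}) A_{\rho,k,j} + R_{\rho}(x),\qquad A_{\rho,k,j}:=\rho^{2}\!\!\int_{B_{\epsilon}(\xi_{\rho,k})}\!\!(e^{u_\rho}+e^{-u_\rho}) v_{\rho,j}\,dy,
\]
and to prove two things: (i) the remainder $R_{\rho}$ is $o(1/\log\rho)$ in $C^{1}_{\mathrm{loc}}(\bar{\Omega}\setminus\{\xi_{1},\dots,\xi_{m}\})$; (ii) $\log\rho\cdot A_{\rho,k,j}\to 2\pi t_{j}^{(k)}$ for every $k$. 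The lemma then follows by multiplying through by $\log\rho$ and letting $\rho\to 0$.

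For (i) I simply refine the estimates in the proof of Lemma~\ref{lemma2}. Choosing $\lambda=\rho^{\gamma}$ with $\gamma\in(0,1/2)$ fixed, the Lipschitz continuity of $G(x,\cdot)$ near $\xi_{k}$ bounds the inner contribution by $C\rho^{\gamma}\,\|\rho^{2}(e^{u_\rho}+e^{-u_\rho})\|_{L^{1}(\Omega)}=O(\rho^{\gamma})$, and the outer piece is $O(\rho^{2-4\gamma})$ exactly as there; both are $o(1/\log\rho)$, and the analogous bound applied to $\partial_{x_i}v_{\rho,j}$ transfers the conclusion to $C^{1}_{\mathrm{loc}}$.

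For (ii) I perform a matched asymptotic expansion at the point $y_{\rho}^{(k)}:=\xi_{\rho,k}+\frac{\tau_{k}\rho}{\sqrt{8}}(R,0)$, with $R$ fixed and large. On the inner side, Proposition~\ref{uno1} gives $v_{\rho,j}(y_{\rho}^{(k)})=\tilde v_{\rho,j}^{(k)}(R,0)\to\tilde v^{(k)}(R,0)$. On the outer side, I plug $y_{\rho}^{(k)}$ into Green's representation, split $G=-\frac{1}{2\pi}\log|\!\cdot\!-\!\cdot\!|+H$, treat the $l\ne k$ blocks and the bulk as in Lemma~\ref{lemma2} (they contribute $o(1)$ since $A_{\rho,l,j}\to 0$ and $H$ is continuous), and rescale $z=\xi_{\rho,k}+\tau_{k}\rho x'/\sqrt{8}$ in the remaining $l=k$ block so that the $\log\rho$ factors out of $\log|y_{\rho}^{(k)}-z|$:
\[
v_{\rho,j}(y_{\rho}^{(k)}) = -\frac{\mu_{\rho,j}}{2\pi}\bigl(A_{\rho,k,j}\log\rho + \tilde J_{\rho}(R)\bigr) + o(1),\quad \tilde J_{\rho}(R):=\int\!\log|x'-(R,0)|\,\frac{\tilde v_{\rho,j}^{(k)}(x')}{(1+|x'|^{2}/8)^{2}}\,dx'.
\]
Dominated convergence yields $\tilde J_{\rho}(R)\to \tilde J(R):=\int\log|x'-(R,0)|\,\tilde v^{(k)}(x')/(1+|x'|^{2}/8)^{2}\,dx'$, and since $\tilde v^{(k)}$ solves $-\Delta\tilde v^{(k)}=\tilde v^{(k)}/(1+|x'|^{2}/8)^{2}$, Green's second identity on $B_{L}$ with $L\to\infty$ gives $\tilde J(R)=-2\pi\tilde v^{(k)}(R,0)-2\pi t_{j}^{(k)}$: the fundamental solution of $-\Delta$ contributes the first term, while the surface integral at infinity produces the crucial $-2\pi t_{j}^{(k)}$ because $\tilde v^{(k)}\to -t_{j}^{(k)}$ at infinity (the other surface term is $O((\log L)/L)=o(1)$ since $\nabla\tilde v^{(k)}=O(|x'|^{-2})$). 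Inserting this into the outer expansion, the $\tilde v^{(k)}(R,0)$ contributions cancel and $\log\rho\cdot A_{\rho,k,j}\to 2\pi t_{j}^{(k)}$ follows.

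Combining (i) and (ii) gives the stated convergence in $C^{0}_{\mathrm{loc}}(\bar\Omega\setminus\{\xi_{1},\dots,\xi_{m}\})$, and the $C^{1}$ refinement is obtained by running the same argument starting from $\partial_{x_i}v_{\rho,j}(x)=\mu_{\rho,j}\rho^{2}\int(e^{u_\rho}+e^{-u_\rho})v_{\rho,j}(y)\partial_{x_i}G(x,y)\,dy$. The main obstacle is the surface term at infinity in step (ii): the fact that $\tilde v^{(k)}$ does not decay but converges to the nonzero constant $-t_{j}^{(k)}$ is precisely what turns the matched asymptotics into the sought coefficient $2\pi t_{j}^{(k)}$, and this contribution has to be tracked carefully together with the cancellation of the bounded-in-$R$ terms, as well as the passage to the limit inside $\tilde J_{\rho}(R)$ which uses the $C^{2}_{\mathrm{loc}}$ convergence from Proposition~\ref{uno1} combined with the uniform decay of the weight $1/(1+|x'|^{2}/8)^{2}$.
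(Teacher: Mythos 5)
Your proof is correct, but it takes a genuinely different route from the paper for the key estimate. Both proofs reduce to showing that $\log\rho\cdot A_{\rho,k,j}\to 2\pi t_j^{(k)}$, where $A_{\rho,k,j}:=\rho^2\int_{B_\epsilon(\xi_{\rho,k})}(e^{u_\rho}+e^{-u_\rho})v_{\rho,j}$, and then feed this into Green's representation. The paper obtains this by a bilinear integration-by-parts identity: it pairs the equation $-\Delta(u_\rho\psi_{\rho,k})=\rho^2(e^{u_\rho}-e^{-u_\rho})\psi_{\rho,k}+\cdots$ against $v_{\rho,j}$ and, symmetrically, the eigenvalue equation against $u_\rho\psi_{\rho,k}$, arriving at the identity $\rho^2\int(e^{u_\rho}-e^{-u_\rho})v_{\rho,j}+o(1)=\mu_{\rho,j}\rho^2\int(e^{u_\rho}+e^{-u_\rho})u_\rho v_{\rho,j}$. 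Peeling off the $-4\alpha_k\log(\tau_k\rho)$ from $u_\rho$ isolates $A_{\rho,k,j}\log\rho$, the left side is $o(1)$ because the projection of $\tilde v^{(k)}$ onto constants vanishes, and a direct computation of $\int\log(1+|x|^2/8)\,\tfrac{8-|x|^2}{8+|x|^2}\,(1+|x|^2/8)^{-2}dx=-4\pi$ produces $8\pi\alpha_k t_j^{(k)}$. Your proof instead uses matched asymptotics: you evaluate $v_{\rho,j}$ at an intermediate point $y_\rho^{(k)}$, compare the inner rescaled limit $\tilde v^{(k)}(R,0)$ with the outer Green's-formula expansion, and extract $2\pi t_j^{(k)}$ from the surface integral at infinity in Green's second identity applied to the limit profile (using $\tilde v^{(k)}\to -t_j^{(k)}$ at infinity and $\nabla\tilde v^{(k)}=O(|x|^{-2})$). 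Both arguments hinge on the same structural fact — the mode $\tfrac{8-|x|^2}{8+|x|^2}$ is non-decaying at infinity, with limit $-t_j^{(k)}$ — but you access it through the boundary term at infinity rather than through the explicit constant arising from the $\log(1+|x|^2/8)$ integral. Your approach has the advantage of making the mechanism (the nonzero asymptote of $\tilde v^{(k)}$) geometrically visible, while the paper's identity is more self-contained and avoids introducing the auxiliary radius $R$. Two small technical points worth noting in a cleaned-up write-up: (a) before passing to the limit in the matching identity you need to observe that $\log\rho\cdot A_{\rho,k,j}$ is bounded (it follows from the matching itself, since $\tilde J_\rho(R)$ and $v_{\rho,j}(y_\rho^{(k)})$ are bounded and $\mu_{\rho,j}\to 1$), and then argue by subsequences; (b) in step (i) the modulus-of-continuity bound on $G(x,\cdot)$ over $B_\lambda(\xi_{\rho,k})$ should be phrased with respect to $\xi_{\rho,k}$ (as the paper does, using uniform continuity rather than Lipschitz continuity) and only afterwards replace $G(x,\xi_{\rho,k})$ by $G(x,\xi_k)$ using $A_{\rho,k,j}=O(1/\log\rho)$.
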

\proof 
We have
\begin{equation} \label{eqq1}
\begin{split}
&\rho^2 \int_{B_{\epsilon} (\xi_{\rho,k})} \left( e^{u_{\rho}(x)}-e^{-u_{\rho}(x)} \right) v_{\rho,j}(x) dx+o(1)\\
&=\rho^2 \int_{B_{2\epsilon} (\xi_{\rho,k})} \left( e^{u_{\rho}(x)}-e^{-u_{\rho}(x)} \right) \psi_{\rho,k}(x) v_{\rho,j}(x) dx+\\
&+ \int_{B_{2\epsilon}(\xi_{\rho,k}) \setminus B_{\epsilon}(\xi_{\rho,k})} (-\Delta \psi_{\rho,k})(x) u_{\rho}(x)v_{\rho,j}(x)dx +\\
&+ 2 \int_{B_{2\epsilon}(\xi_{\rho,k}) \setminus B_{\epsilon}(\xi_{\rho,k})} \nabla \psi_{\rho,k}(x) \nabla u_{\rho}(x)v_{\rho,j}(x)dx\\ 
&=\int_{\Omega} -\Delta (u_{\rho}(x)\psi_{\rho,k}(x)) v_{\rho,j}(x) dx \\
&=\int_{\Omega} -\Delta v_{\rho,j}(x) u_{\rho}(x) \psi_{\rho,k}(x)dx\\
&=\mu_{\rho,j} \rho^2 \int_{B_{\epsilon}(\xi_{\rho,k})} \left( e^{u_{\rho}(x)} + e^{-u_{\rho}(x)} \right) v_{\rho,j} (x) u_{\rho}(x) dx,
\end{split}
\end{equation}
where we used the fact that $\psi$ and its derivatives are bounded in $\R^2$, $\Vert v_{\rho,j} \Vert_{C(\Omega)} \leqslant 1$, $\Vert u_{\rho} \Vert_{C^1(B_{2\epsilon}(\xi_{\rho,k}) \setminus B_{\epsilon}(\xi_{\rho,k}))} \leqslant C$, were $C$ does not depend on $\rho$, and that by Lemma \ref{lemma2} we have $v_{\rho,j} \to 0$ in $C_{\mathrm{loc}}^1(\bar{\Omega} \setminus \lbrace \xi_{1}, \ldots, \xi_{m} \rbrace)$. 

Let us start by studying the first integral. By Proposition \ref{uno1} we know that
$$\tilde{v}_{\rho,j}^{(k)}(x) \to \tilde{v}_{j}^{(k)}(x) = \frac{s_{1,j}^{(k)} x_1 + s_{2,j}^{(k)} x_2}{8+\vert x \vert^2} + t_j^{(k)} \frac{8-\vert x \vert^2}{8+ \vert x \vert^2}$$
in $C^2_{\mathrm{loc}}(\R^2)$. Hence
\begin{equation} \label{eqq2}
\begin{split}
&\rho^2 \int_{B_{\epsilon} (\xi_{\rho,k})} \left( e^{u_{\rho}(x)}-e^{-u_{\rho}(x)} \right) v_{\rho,j}(x) dx\\
&=\alpha_k \rho^2 \frac{\tau_k^2 \rho^2}{8} \frac{1}{\tau_k^4 \rho^4} \int_{B_{\frac{2\sqrt{8}\epsilon}{\tau_k \rho}}(0)} \frac{e^{\ell_{\rho,k}\left(\frac{\tau_k \rho x}{\sqrt{8}} +\xi_{\rho,k}\right) + \varphi_{\rho}\left(\frac{\tau_k \rho x}{\sqrt{8}} +\xi_{\rho,k}\right)}}{\left(1+\frac{\vert x \vert^2}{8} \right)^2}  \tilde{v}_{\rho,j}^{(k)}(x)dx +\\
&-\alpha_k \rho^2 \frac{\tau_k^2 \rho^2}{8} \tau_k^4 \rho^4 \int_{B_{\frac{2\sqrt{8}\epsilon}{\tau_k \rho}}(0)} e^{-\ell_{\rho,k}\left(\frac{\tau_k \rho x}{\sqrt{8}} +\xi_{\rho,k}\right) - \varphi_{\rho}\left(\frac{\tau_k \rho x}{\sqrt{8}} +\xi_{\rho,k}\right)}\left(1+\frac{\vert x \vert^2}{8} \right)^2  \tilde{v}_{\rho,j}^{(k)}(x)dx \\
&\hbox{using Lebesgue theorem}\\
&= \alpha_k \int_{\R^2} \frac{1}{\left(1+\frac{\vert x \vert^2}{8} \right)^2} \left(\frac{s_{1,j}^{(k)} x_1 + s_{2,j}^{(k)} x_2}{8+\vert x \vert^2} + t_j^{(k)} \frac{8-\vert x \vert^2}{8+ \vert x \vert^2}\right) dx+o(1)\\
&=o(1).
\end{split}
\end{equation}

Let us now study the second integral.
\begin{equation} \label{eqq3}
\begin{split}
&\mu_{\rho,j} \rho^2\int_{B_{\epsilon}(\xi_{\rho,k})} \left( e^{u_{\rho}(x)} + e^{-u_{\rho}(x)} \right) v_{\rho,j} (x) u_{\rho}(x) dx\\
&=-4 \alpha_k \log(\tau_k \rho) \mu_{\rho,j} \rho^2 \int_{B_{\epsilon}(\xi_{\rho,k})} \left( e^{u_{\rho}(x)} + e^{-u_{\rho}(x)} \right) v_{\rho,j} (x) dx+\\
&+ \mu_{\rho,j} \rho^2 \frac{\tau_k^2 \rho^2}{8} \frac{1}{\tau_k^4 \rho^4} \int_{B_{\frac{\sqrt{8}\epsilon}{\tau_k \rho}}(0)} \frac{e^{\ell_{\rho,k} \left(\frac{\tau_k \rho x}{\sqrt{8}} + \xi_{\rho,k} \right)+ \varphi_{\rho} \left(\frac{\tau_k \rho x}{\sqrt{8}} + \xi_{\rho,k} \right)}}{\left(1+\frac{\vert x \vert^2}{8} \right)^2} \tilde{v}_{\rho,j}^{(k)}(x) \\
&\left(- 2 \alpha_k \log\left( 1 + \frac{\vert x \vert^2}{8} \right)+ \ell_{\rho,k}\left(\frac{\tau_k \rho x}{\sqrt{8}} + \xi_{\rho,k} \right) + \varphi_{\rho}\left(\frac{\tau_k \rho x}{\sqrt{8}} + \xi_{\rho,k} \right) \right) dx\\
&+ \mu_{\rho,j} \rho^2 \frac{\tau_k^2 \rho^2}{8} \tau_k^4 \rho^4 \int_{B_{\frac{\sqrt{8}\epsilon}{\tau_k \rho}}(0)} e^{-\ell_{\rho,k} \left(\frac{\tau_k \rho x}{\sqrt{8}} + \xi_{\rho,k} \right)- \varphi_{\rho} \left(\frac{\tau_k \rho x}{\sqrt{8}} + \xi_{\rho,k} \right)}\left(1+\frac{\vert x \vert^2}{8} \right)^2 \tilde{v}_{\rho,j}^{(k)}(x)\\
&\left(- 2 \alpha_k \log\left( 1 + \frac{\vert x \vert^2}{8} \right)+ \ell_{\rho,k}\left(\frac{\tau_k \rho x}{\sqrt{8}} + \xi_{\rho,k} \right) + \varphi_{\rho}\left(\frac{\tau_k \rho x}{\sqrt{8}} + \xi_{\rho,k} \right) \right) dx\\
&\hbox{using Lebesgue theorem}\\
&=-4 \alpha_k \log(\rho) (1+o(1)) \rho^2 \int_{B_{\epsilon}(\xi_{\rho,k})} \left( e^{u_{\rho}(x)} + e^{-u_{\rho}(x)} \right) v_{\rho,j} (x) dx+\\
&+ \int_{\R^2} \frac{1}{\left(1+\frac{\vert x \vert^2}{8} \right)^2} \left(\frac{s_{1,j}^{(k)} x_1 + s_{2,j}^{(k)} x_2}{8+\vert x \vert^2} + t_j^{(k)} \frac{8-\vert x \vert^2}{8+ \vert x \vert^2}\right)\\
&\left(- 2 \alpha_k \log\left( 1 + \frac{\vert x \vert^2}{8} \right)+ \ell_{k} \right) dx+o(1)\\
&=-4 \alpha_k \log(\rho) (1+o(1)) \rho^2 \int_{B_{\epsilon}(\xi_{\rho,k})} \left( e^{u_{\rho}(x)} + e^{-u_{\rho}(x)} \right) v_{\rho,j} (x) dx+ 8\pi \alpha_k \tau_j^{(k)} +o(1).
\end{split}
\end{equation}

Using \eqref{eqq2} and \eqref{eqq3} in \eqref{eqq1} we get
\begin{equation} \label{stima}
\rho^2 \int_{B_{\epsilon}(\xi_{\rho,k})} \left( e^{u_{\rho}(x)} + e^{-u_{\rho}(x)} \right) v_{\rho,j} (x) dx = \frac{2\pi t_j^{(k)} + o(1)}{\log(\rho) (1+o(1))}.
\end{equation}

Let us now use Green's representation formula to estimate $v_{\rho,j}$. Let $K \subset \bar{\Omega} \setminus \lbrace \xi_1, \ldots, \xi_m \rbrace$ be compact. Then, for each $x \in K$, we have
\begin{align*}
&\log(\rho) v_{\rho,j}(x) \\
&= \log(\rho)\mu_{\rho,j} \rho^2 \int_{\Omega} \left( e^{u_{\rho}(y)} + e^{-u_{\rho}(y)} \right) v_{\rho,j}(y) G(x,y) dy\\
&=\log(\rho) \mu_{\rho,j} \sum_{k=1}^m G(x,\xi_k) \rho^2 \int_{B_{\epsilon}(\xi_{\rho,k})} \left( e^{u_{\rho}(y)} + e^{-u_{\rho}(y)} \right) v_{\rho,j}(y) dy +\\
&+\log(\rho) \mu_{\rho,j} \sum_{k=1}^m \rho^2 \int_{B_{\epsilon}(\xi_{\rho,k})} \left( e^{u_{\rho}(y)} + e^{-u_{\rho}(y)} \right) v_{\rho,j}(y) \left( G(x,y)-G(x,\xi_k) \right) dy+ o(1)\\
&\hbox{using \eqref{stima}}\\
&=2 \pi \sum_{k=1}^m t_j^{(k)} G(x,\xi_k)  + o(1),
\end{align*}
where we used the fact that, proceeding as in the proof of Lemma \ref{t4} we can prove that
$$\left| \log(\rho) \rho^2 \int_{B_{\epsilon}(\xi_{\rho,k})} \left( e^{u_{\rho}(y)} + e^{-u_{\rho}(y)} \right) v_{\rho,j}(y) \left( G(x,y)-G(x,\xi_k) \right) dy \right|=o(1).$$
Using the fact that
$$\frac{\partial v_{\rho,j}}{\partial x_i} = \rho^2 \int_{\Omega} \left(e^{u_{\rho}(y)} + e^{-u_{\rho}(y)} \right) v_{\rho, j} (y) \frac{\partial G}{\partial x_i}(x,y) dy,$$
and proceeding as above, we get the estimate in $C^1(\bar{\Omega} \setminus \lbrace \xi_1, \ldots, \xi_m \rbrace)$.
\endproof

We have:

\begin{lem}[{\cite[Proposition 5.5]{OH}}] \label{lemma3}
For each $\xi,\in \R^2$, $R>0$ e $f,g \in C^2(\overline{B_R(\xi)})$ we have
\begin{equation} \label{eqlem}
\begin{split}
&\int_{B_R(\xi)} \left\lbrace \left[(x-\xi) \cdot \nabla f (x)\right] \Delta g(x) + \left[ (x-\xi) \cdot \nabla g (x)\right] \Delta f(x) \right\rbrace  dx\\
&= R\int_{\partial B_R(\xi)} \left( 2\frac{\partial f}{\partial \nu}(x)\frac{\partial g}{\partial \nu}(x)-\nabla f(x) \cdot \nabla g(x) \right) d \sigma(x).
\end{split}
\end{equation}
\end{lem}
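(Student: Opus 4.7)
This is a classical Pohozaev-type integral identity, and the natural proof proceeds by Green's first identity applied to the two bulk terms, followed by a symmetrization that collapses the cross terms into a total gradient, and finally the divergence theorem on the ball.

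The plan is as follows. Set $\phi(x) := (x - \xi)\cdot \nabla f(x)$ and $\psi(x) := (x-\xi) \cdot \nabla g(x)$. Green's first identity gives
$$\int_{B_R(\xi)} \phi \, \Delta g \, dx = -\int_{B_R(\xi)} \nabla \phi \cdot \nabla g \, dx + \int_{\partial B_R(\xi)} \phi \, \frac{\partial g}{\partial \nu} \, d\sigma,$$
and the symmetric identity with the roles of $f$ and $g$ exchanged. On $\partial B_R(\xi)$ the outward unit normal is $\nu = (x-\xi)/R$, so $\phi = R \, \partial_\nu f$ and $\psi = R \, \partial_\nu g$; the two boundary terms combine to produce exactly the contribution $2R \int_{\partial B_R(\xi)} \partial_\nu f \, \partial_\nu g \, d\sigma$ appearing on the right-hand side of \eqref{eqlem}.

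Next I would handle the two bulk pieces in parallel. The product rule gives $\nabla \phi = \nabla f + \sum_i (x_i-\xi_i)\nabla \partial_i f$, and analogously for $\nabla \psi$. The key symmetrization is the identity
$$\nabla \phi \cdot \nabla g + \nabla \psi \cdot \nabla f = 2 \, \nabla f \cdot \nabla g + (x-\xi) \cdot \nabla \bigl(\nabla f \cdot \nabla g \bigr),$$
which follows because the mixed Hessian terms recombine as $\sum_{i,k}(x_i-\xi_i)\bigl[\partial_k \partial_i f \, \partial_k g + \partial_k \partial_i g \, \partial_k f \bigr] = (x-\xi)\cdot \nabla(\nabla f \cdot \nabla g)$. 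Writing $h := \nabla f \cdot \nabla g$, the divergence theorem applied to the radial vector field $(x-\xi)\, h$ on $B_R(\xi) \subset \R^2$ then yields
$$\int_{B_R(\xi)} (x-\xi)\cdot \nabla h \, dx = R \int_{\partial B_R(\xi)} h \, d\sigma - 2\int_{B_R(\xi)} h \, dx,$$
where the factor $2$ comes from $\mathrm{div}(x-\xi) = 2$ in the plane.

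Assembling the three identities, the bulk terms $\pm 2\int_{B_R(\xi)} h$ cancel, and the boundary term $-R \int_{\partial B_R(\xi)} \nabla f \cdot \nabla g$ combines with $2R \int_{\partial B_R(\xi)} \partial_\nu f \, \partial_\nu g$ to give precisely the right-hand side of \eqref{eqlem}. There is no serious obstacle here; the only care needed is in the sign and index bookkeeping of the product-rule expansion, and the observation that the symmetric combination $f \leftrightarrow g$ is essential for the cross Hessian terms to collapse into the divergence-form expression $(x-\xi)\cdot \nabla(\nabla f \cdot \nabla g)$.
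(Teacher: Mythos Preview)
Your argument is correct and is precisely the standard Pohozaev-type computation. Note, however, that the paper does not give its own proof of this lemma: it is quoted directly from \cite[Proposition~5.5]{OH} and stated without proof, so there is nothing to compare against. Your write-up stands on its own as a complete and clean derivation.
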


In the following, we will denote by $o_R(f(R))$ a function such that\\ $\lim_{R \to 0} \frac{o_R(f(R))}{f(R)}=0$. We prove the following lemma.

\begin{lem} \label{lll}
Let $R>0$ be such that $B_R(\xi_k) \cap B_R(\xi_j)=\emptyset$ e $B_R(\xi_k) \cap B_R(\xi_i)=\emptyset$. If $k \neq j$ e $k \neq i$ then we have
$$\int_{\partial B_R(\xi_k)} \nabla G(x,\xi_i) \cdot \nabla G(x, \xi_j) d \sigma(x)= \int_{\partial B_R(\xi_k)} \frac{\partial G}{\partial \nu} (x,\xi_i) \frac{\partial G}{\partial \nu} (x, \xi_j) d \sigma(x)=o_R(1).$$
If $k = j \neq i$ or $k =i \neq j$ then we have
$$R\int_{\partial B_R(\xi_k)} \nabla G(x,\xi_i) \cdot \nabla G(x, \xi_j) d \sigma(x)=R\int_{\partial B_R(\xi_k)} \frac{\partial G}{\partial \nu} (x,\xi_i) \frac{\partial G}{\partial \nu} (x, \xi_j) d \sigma(x)= o_R(1).$$
If $k = j = i$ then we have
$$R\int_{\partial B_R(\xi_k)} \nabla G(x,\xi_i) \cdot \nabla G(x, \xi_j) d \sigma(x)=R\int_{\partial B_R(\xi_k)} \frac{\partial G}{\partial \nu} (x,\xi_i) \frac{\partial G}{\partial \nu}  (x, \xi_j) d \sigma(x)= \frac{1}{2\pi} + o_R(1).$$
\end{lem}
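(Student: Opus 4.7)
The plan is to carry out a direct asymptotic expansion of $\nabla G$ on the circle $\partial B_R(\xi_k)$. Writing $G(x,\xi) = -\frac{1}{2\pi}\log|x-\xi| - H(x,\xi)$ with $H$ the smooth regular part,
$$\nabla_x G(x,\xi) = -\frac{1}{2\pi}\frac{x-\xi}{|x-\xi|^2} - \nabla_x H(x,\xi).$$
I parameterize $x = \xi_k + R\vec{e}_r$ with $\vec{e}_r = (\cos\theta,\sin\theta)$, so that $\nu = \vec{e}_r$ and $d\sigma = R\,d\theta$. Each of the three cases is then a matter of distinguishing the regular from the singular factors in the integrand and carrying out the angular integration.

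In Case 1 both $\nabla G(x,\xi_i)$ and $\nabla G(x,\xi_j)$ are uniformly bounded on $\partial B_R(\xi_k)$ (their poles lie away from $\xi_k$), so the integrand is $O(1)$ and the integral over a circle of length $2\pi R$ is $O(R) = o_R(1)$. In Case 2, say $k=j\neq i$, I expand $\nabla G(x,\xi_i) = A + O(R)$ with $A = \nabla G(\xi_k,\xi_i)$ a constant vector and $\nabla G(x,\xi_k) = -\frac{1}{2\pi R}\vec{e}_r + O(1)$. The singular cross term, integrated against $R\,d\theta$, is proportional to $\int_0^{2\pi}\vec{e}_r\,d\theta = 0$ and leaves only an $O(R)$ remainder; the regular pieces contribute $O(R)$ directly. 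Hence the plain integral is $O(R) = o_R(1)$, and the extra $R$ prefactor only reinforces the conclusion. The $\partial G/\partial\nu$ form is handled identically via inner products with $\vec{e}_r$.

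In Case 3 ($i=j=k$), expanding $\nabla G(x,\xi_k) = -\frac{1}{2\pi R}\vec{e}_r + B + O(R)$ with $B = -\nabla_x H(\xi_k,\xi_k)$ and squaring yields
$$|\nabla G(x,\xi_k)|^2 = \frac{1}{4\pi^2 R^2} - \frac{1}{\pi R}\,B\cdot\vec{e}_r + O(1).$$
The leading constant integrates to $\frac{1}{4\pi^2 R^2}\cdot 2\pi R = \frac{1}{2\pi R}$, the $1/R$ cross term vanishes up to $O(R)$ by the same angular cancellation, and the $O(1)$ remainder contributes $O(R)$. Multiplying by $R$ gives exactly $\frac{1}{2\pi} + o_R(1)$. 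Since $\partial G/\partial\nu = -\frac{1}{2\pi R} + B\cdot\vec{e}_r + O(R)$ shares the same leading singular coefficient, $(\partial G/\partial\nu)^2$ differs from $|\nabla G|^2$ by an $O(1)$ quantity and produces the identical limit. The main obstacle is actually Case 2: the naive pointwise bound $|\nabla G(x,\xi_i)\cdot\nabla G(x,\xi_k)| = O(1/R)$ only gives an $O(1)$ integral, and the improvement to $O(R)$ relies on the cancellation $\int_0^{2\pi}\vec{e}_r\,d\theta = 0$, which is the Fourier manifestation of the harmonicity of $-\frac{1}{2\pi}\log|x-\xi_k|$ away from its pole. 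As a sanity check on the constant $\frac{1}{2\pi}$ in Case 3, one can alternatively apply Lemma \ref{lemma3} on the annulus $B_R(\xi_k)\setminus B_\varrho(\xi_k)$ and let $\varrho\to 0$, reading off the same value as the residue at the singularity.
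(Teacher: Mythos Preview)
Your proof is correct and follows essentially the same route as the paper: decompose $\nabla G(x,\xi)$ into the explicit singular part $-\frac{1}{2\pi}\frac{x-\xi}{|x-\xi|^2}$ and the smooth regular part, pass to polar coordinates on $\partial B_R(\xi_k)$, and use the angular cancellation $\int_0^{2\pi}\vec{e}_r\,d\theta=0$ to kill the cross terms. The paper organizes the computation slightly differently---it expands both factors $\nabla G(x,\xi_k)$ and $\nabla G(x,\xi_j)$ simultaneously and treats the cases $j=k$ and $j\neq k$ inside the resulting log--log integral---whereas you Taylor-expand the regular factor about $\xi_k$ case by case; but the underlying mechanism and the key cancellation are identical.
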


\proof
The case $k \neq i$ and $k \neq j$ is trivial. Let us study the case $k=i$.
$$\nabla G(x,\xi_k) = -\frac{1}{2\pi} \frac{(x-\xi_k)}{\vert x-\xi_k \vert^2} + \nabla H(x,\xi_k),$$
Then
\begin{align*}
R\int_{\partial B_R(\xi_k)} \nabla G(x,\xi_k) \cdot \nabla G(x, \xi_j) d \sigma(x)&=-\frac{R}{2\pi} \int_{\partial B_R(\xi_k)} \frac{(x-\xi_k)}{\vert x-\xi_k \vert^2} \cdot \nabla H(x,\xi_j) d\sigma(x) +\\
&+ \frac{R}{4 \pi^2} \int_{\partial B_R(\xi_k)} \frac{(x-\xi_k)\cdot(x-\xi_j)}{\vert x-\xi_k \vert^2 \vert x-\xi_j \vert^2} d\sigma(x)+o_R(1).
\end{align*}
Where we used the fact that $\vert \nabla H(x,\xi_k) \vert $ e $\left| \frac{1}{2\pi} \frac{(x-\xi_j)}{\vert x-\xi_j \vert^2} + \nabla H(x,\xi_j) \right|$ are bounded in $B_R(\xi_k)$ and hence are bounded in $\partial B_R(\xi_k)$, uniformly in $R$.
Let us study the two integrals separately.
\begin{align*}
\frac{R}{2\pi} \int_{\partial B_R(\xi_k)} \frac{(x-\xi_k)}{\vert x-\xi_k \vert^2} \cdot \nabla H(x,\xi_j) d\sigma(x)=\frac{R}{2\pi} \int_0^{2\pi} (\cos (\theta), \sin(\theta))\cdot \nabla H(x,\xi_j) d\theta=o_R(1).
\end{align*}
Where we used the fact that $\left| \nabla H(x,\xi_j) \right|$ is bounded in $\partial B_R(\xi_k)$, uniformly in $R$. For the second integral we have.
\begin{align*}
&\frac{R}{4 \pi^2} \int_{\partial B_R(\xi_k)} \frac{(x-\xi_k)\cdot (x-\xi_j)}{\vert x-\xi_k \vert^2 \vert x-\xi_j \vert^2} d\sigma(x)\\
&=\frac{R^3}{4 \pi^2 R^2} \int_0^{2\pi} \frac{(\cos(\theta), \sin(\theta))\cdot (R(\cos(\theta), \sin(\theta)+ (\xi_k-\xi_j))}{\vert R(\cos(\theta), \sin(\theta))+ (\xi_k-\xi_j) \vert^2} d\theta.\\
\end{align*}
If $j \neq k$ then we have
$$\frac{R^3}{4 \pi^2 R^2} \int_0^{2\pi} \frac{(\cos(\theta), \sin(\theta))\cdot (R(\cos(\theta), \sin(\theta)+ (\xi_k-\xi_j))}{\vert R(\cos(\theta), \sin(\theta)+ (\xi_k-\xi_j) \vert^2} d\theta=o_R(1),$$
where we used the fact that the function inside the integral is uniformly bounded in $R$.
If $j=k$ then we have
$$\frac{R^3}{4 \pi^2 R^2} \int_0^{2\pi} \frac{(\cos(\theta), \sin(\theta))\cdot (R(\cos(\theta), \sin(\theta)+ (\xi_k-\xi_j))}{\vert R(\cos(\theta), \sin(\theta)+ (\xi_k-\xi_j) \vert^2} d\theta = \frac{1}{2\pi}.$$
Analogous computations give the other identities.
\endproof

The next theorem shows that the hypotheses of Lemma \ref{due2} do not hold if the eigenvalues go to one too fast. Thanks to this theorem we will prove that the estimate provided by Lemma \ref{due2} does not holds for the eigenfunctions from $m+1$ to $3m$, but holds only for the eigenfunctions from $3m+1$ to $4m$ .

\begin{teo}  \label{tre3}
For every $j>m$ such that $\mu_{\rho,j} <1+C\rho^2$, in \eqref{auto} we have that $t_j=\left(t_j^{(1)}, \ldots, t_j^{(m)}\right) = (0, \ldots, 0)$, that is
$$\tilde{v}_{\rho,j}^{(k)} \to \frac{s_{1,j}^{(k)} x_1 + s_{2,j}^{(k)} x_2}{8+\vert x \vert^2}$$
in $C^2_{\mathrm{loc}}(\R^2)$ for some $s=(s_{1,j}^{(k)}, s_{2,j}^{(k)})) \neq \textbf{0}$. Furthermore, if $t_j \neq (0, \ldots, 0)$ then $\mu_{\rho,j}>1$.
\end{teo}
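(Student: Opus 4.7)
The plan is to derive a sharp Pohozaev-type identity relating $1-\mu_{\rho,j}$ to boundary data on small balls around the blow-up points, and then to show that if some $t_j^{(k)}\neq 0$ then in fact $1-\mu_{\rho,j}=\frac{3}{2\log\rho}(1+o(1))$. Since $\log\rho<0$ this immediately gives $\mu_{\rho,j}>1$ (the second assertion), and since $1/|\log\rho|\gg \rho^2$, it contradicts $\mu_{\rho,j}<1+C\rho^2$ (the first assertion, by contradiction). Both claims thus follow from the same asymptotic identity.

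To produce that identity, I apply Lemma \ref{lemma3} with $f=v_{\rho,j}$ and $g=u_\rho$ on $B_R(\xi_k)$ for $R$ small enough to isolate $\xi_k$; substitute $\Delta u_\rho=-\rho^2(e^{u_\rho}-e^{-u_\rho})$ and $\Delta v_{\rho,j}=-\mu_{\rho,j}\rho^2(e^{u_\rho}+e^{-u_\rho})v_{\rho,j}$; and integrate by parts the term involving $(x-\xi_k)\cdot\nabla v_{\rho,j}$ using $\nabla(e^{u_\rho}-e^{-u_\rho})=(e^{u_\rho}+e^{-u_\rho})\nabla u_\rho$. This yields
\begin{align*}
(1-\mu_{\rho,j})\rho^2\int_{B_R(\xi_k)}(x-\xi_k)\cdot\nabla u_\rho(e^{u_\rho}+e^{-u_\rho})v_{\rho,j}\,dx
&= R\int_{\partial B_R(\xi_k)}(2\partial_\nu u_\rho\,\partial_\nu v_{\rho,j}-\nabla u_\rho\cdot\nabla v_{\rho,j})\,d\sigma\\
&\quad +R\rho^2\int_{\partial B_R(\xi_k)}v_{\rho,j}(e^{u_\rho}-e^{-u_\rho})\,d\sigma\\
&\quad -2\rho^2\int_{B_R(\xi_k)}v_{\rho,j}(e^{u_\rho}-e^{-u_\rho})\,dx.
\end{align*}
Rescaling $x=\xi_{\rho,k}+\tau_k\rho y/\sqrt 8$ and using the asymptotics from the preliminaries together with Proposition \ref{uno1}, the left-hand side integral converges to $\tfrac{16\pi}{3}\alpha_k t_j^{(k)}$: the linear part of $\tilde v_j^{(k)}$ integrates to zero by odd symmetry, while the dilation part $t_j^{(k)}(8-|y|^2)/(8+|y|^2)$ contributes after a direct change of variables. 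For the boundary integral I invoke Lemma \ref{due2} (valid under the assumption $t_j\neq 0$) to get $v_{\rho,j}\log\rho\to 2\pi\sum_\ell t_j^{(\ell)}G(\cdot,\xi_\ell)$ in $C^1_{\mathrm{loc}}$ away from the blow-up points, combined with $u_\rho\to 8\pi\sum_i\alpha_i G(\cdot,\xi_i)$; expanding the product as a double sum over $i,\ell$ and applying Lemma \ref{lll}, only the diagonal $i=\ell=k$ survives at leading order, producing $\tfrac{8\pi\alpha_k t_j^{(k)}}{\log\rho}(1+o_R(1))$.

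The two remaining terms must be shown to be $o(1/\log\rho)$, not merely $o(1)$. The boundary remainder is trivially $O(\rho^2/|\log\rho|)$. For $\rho^2\int v_{\rho,j}(e^{u_\rho}-e^{-u_\rho})\,dx$, the leading rescaled integral vanishes (the explicit integrals $\int(8\pm|y|^2)/(8+|y|^2)^3\,dy$ computed in the proof of Lemma \ref{due2} are both zero), and its sharper bound comes from chaining \eqref{eqq1}, \eqref{eqq3} and \eqref{stima}, in which the $O(1)$ pieces $-8\pi\alpha_k t_j^{(k)}$ and $+8\pi\alpha_k t_j^{(k)}$ cancel exactly. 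Putting everything together and taking the iterated limit (first $\rho\to 0$ for fixed small $R$, then $R\to 0$), whenever some $t_j^{(k)}\neq 0$ one gets $1-\mu_{\rho,j}=\frac{3}{2\log\rho}(1+o(1))$, from which both assertions follow as explained. The main obstacle is precisely keeping every remainder at the precision $o(1/\log\rho)$: naive bounds only give $o(1)$, and extracting the sharper rate requires the full auxiliary apparatus of Section~4 and a careful two-scale limit so that the $o_R(1)$ error from Lemma \ref{lll} does not interact with the rescaling errors in $\rho$.
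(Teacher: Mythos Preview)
Your proposal is correct and follows essentially the same route as the paper: both apply the Pohozaev-type identity of Lemma~\ref{lemma3} with $f=u_\rho$, $g=v_{\rho,j}$ on $B_R(\xi_k)$, rewrite the volume terms via the substitution $(x-\xi_k)\cdot\nabla v_{\rho,j}\,(e^{u_\rho}-e^{-u_\rho}) = (x-\xi_k)\cdot\nabla[(e^{u_\rho}-e^{-u_\rho})v_{\rho,j}] - (x-\xi_k)\cdot\nabla u_\rho\,(e^{u_\rho}+e^{-u_\rho})v_{\rho,j}$, compute the boundary term through Lemma~\ref{due2} and Lemma~\ref{lll}, and then take the iterated limit $\rho\to 0$ followed by $R\to 0$ to extract $(1-\mu_{\rho,j})=\tfrac{3}{2\log\rho}(1+o(1))$. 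Your discussion of why the residual volume integral is $o(1/\log\rho)$ (rather than merely $o(1)$) is in fact more explicit than what the paper writes, where this rate is simply asserted after the Lebesgue passage.
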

\proof
By Proposition \ref{uno1} we have that
$$\tilde{v}_{\rho,j}^{(k)} \to \frac{s_{1,j}^{(k)} x_1 + s_{2,j}^{(k)} x_2}{8+\vert x \vert^2} + t_j^{(k)} \frac{8-\vert x \vert^2}{8+ \vert x \vert^2}$$
in $C^2_{\mathrm{loc}}(\R^2)$. If $t_j=\textbf{0}$ then $s \neq \textbf{0}$, otherwise we would have that $v^{(k)}_{\rho,j} \to 0$. Let us assume by contradiction that $t_j \neq \textbf{0}$,that is $t_j^{(k)} \neq 0$ for some $k=1, \ldots, m$.
Using \eqref{eqlem} with $f=u_{\rho}$ and $g= v_{\rho,j}$ we get
\begin{align*}
& R\int_{\partial B_R(\xi_k)} \left( 2\frac{\partial u_\rho}{\partial \nu}(x)\frac{\partial v_{\rho,j}}{\partial \nu}(x)-\nabla u_{\rho}(x) \cdot \nabla v_{\rho,j}(x)\right) d \sigma(x)\\
&=\int_{B_R(\xi_k)} \left\lbrace \left[(x-\xi_k) \cdot \nabla u_\rho (x)\right] \Delta v_{\rho,j}(x) + \left[ (x-\xi_k) \cdot \nabla v_{\rho,j} (x)\right] \Delta u_{\rho}(x) \right\rbrace dx.
\end{align*}
For the left hand side, by Lemma \ref{due2} we have
\begin{align*}
&R\int_{\partial B_R(\xi_k)} \left( 2\frac{\partial u_\rho}{\partial \nu}(x)\frac{\partial v_{\rho,j}}{\partial \nu}(x)-\nabla u_{\rho}(x) \cdot \nabla v_{\rho,j}(x)\right) d \sigma(x)\\
&=\frac{32 \pi^2 R}{\log(\rho)}\int_{\partial B_R(\xi_k)} \frac{\partial }{\partial \nu} \left( \sum_{i=1}^m \alpha_i G(x,\xi_i) + o(1) \right) \frac{\partial}{\partial \nu} \left(  \sum_{n=1}^m t_j^{(n)}G(x,\xi_n)+o(1) \right) d \sigma(x)+\\
&- \frac{16 \pi^2 R}{\log(\rho)}\int_{\partial B_R(\xi_k)} \nabla \left( \sum_{i=1}^m \alpha_i G(x,\xi_i)+o(1) \right) \cdot \nabla \left( \sum_{n=1}^m t_j^{(n)}G(x,\xi_n)+o(1) \right) d \sigma(x)\\
&\hbox{applying Lemma \ref{lll}}\\
&=\frac{8 \pi}{\log(\rho)} \alpha_k t_j^{(k)} + o_R(1).
\end{align*}
Let us now study the right hand side.
\begin{align*}
&\int_{B_R(\xi_k)} \left\lbrace \left[(x-\xi_k) \cdot \nabla u_\rho (x)\right] \Delta v_{\rho,j}(x) + \left[ (x-\xi_k) \cdot \nabla v_{\rho,j} (x)\right] \Delta u_{\rho}(x)\right\rbrace dx\\
&=-\rho^2 \int_{B_R(\xi_k)} (x-\xi_k) \cdot \nabla \left( \left( e^{u_{\rho}(x)} - e^{-u_{\rho}(x)} \right) v_{\rho,j}(x) \right) dx +\\
&-(\mu_{\rho,j}-1) \rho^2 \int_{B_R(\xi_k)} \left[(x-\xi_k) \cdot \nabla u_\rho (x)\right] \left( e^{u_{\rho}(x)} + e^{-u_{\rho}(x)} \right) v_{\rho,j}(x) dx \\
&=-\rho^2 \int_{\partial B_R(\xi_k)} (x-\xi_k) \cdot \nu  \left( e^{u_{\rho}(x)} - e^{-u_{\rho}(x)} \right) v_{\rho,j}(x) d\sigma(x) +\\
&+ 2 \rho^2 \int_{B_R(\xi_k)} \left( e^{u_{\rho}(x)} - e^{-u_{\rho}(x)} \right) v_{\rho,j}(x) dx +\\
&+(1-\mu_{\rho,j}) \rho^2 \int_{B_R(\xi_k)} \left[(x-\xi_k) \cdot \nabla u_\rho (x)\right] \left( e^{u_{\rho}(x)} + e^{-u_{\rho}(x)} \right) v_{\rho,j}(x) dx.
\end{align*}
Let us study these three integrals separately.
For the first integral we have
\begin{align*}
&-\rho^2 \int_{\partial B_R(\xi_k)} (x-\xi_k) \cdot \nu \left( e^{u_{\rho}(x)} - e^{-u_{\rho}(x)} \right) v_{\rho,j}(x) d\sigma(x)\\
&=- R \rho^2 \int_{\partial B_R(\xi_k)} \left( e^{u_{\rho}(x)} - e^{-u_{\rho}(x)} \right) v_{\rho,j}(x) d\sigma(x)\\
&=o_R(1),
\end{align*}
since
\begin{align*}
&\rho^2 \left| \int_{\partial B_R(\xi_k)} \left( e^{u_{\rho}(x)} - e^{-u_{\rho}(x)} \right) v_{\rho,j}(x) d\sigma(x) \right|\\
&\leqslant \rho^2 \frac{\tau_k^2 \rho^2}{8} \frac{1}{\tau_k^4 \rho^4} \int_{\frac{\sqrt{8} R}{\tau_k \rho}(0)} \frac{e^{\ell_{\rho,k} \left(\frac{\tau_k \rho x}{\sqrt{8}} +\xi_{\rho,k} \right) + \varphi_{\rho}\left(\frac{\tau_k \rho x}{\sqrt{8}} +\xi_{\rho,k} \right)}}{\left(1+\frac{\vert x \vert^2}{8} \right)^2} dx + \\
&+\rho^2 \frac{\tau_k^2 \rho^2}{8} \tau_k^4 \rho^4 \int_{\frac{\sqrt{8} R}{\tau_k \rho}(0)} e^{-\ell_{\rho,k} \left(\frac{\tau_k \rho x}{\sqrt{8}} +\xi_{\rho,k} \right) - \varphi_{\rho}\left(\frac{\tau_k \rho x}{\sqrt{8}} +\xi_{\rho,k} \right)}\left(1+\frac{\vert x \vert^2}{8} \right)^2 dx\\
&\hbox{using Lebesgue theorem}\\
&=8\pi+o(1).
\end{align*}

For the second integral we have
\begin{align*}
& 2 \rho^2 \int_{B_R(\xi_k)} \left( e^{u_{\rho}(x)} - e^{-u_{\rho}(x)} \right) v_{\rho,j}(x) dx\\
&= \alpha_k 2 \rho^2 \frac{\tau_k^2 \rho^2}{8} \frac{1}{\tau_k^4 \rho^4} \int_{B_\frac{\sqrt{8}R}{\tau_k \rho}(0)} \frac{e^{\ell_{\rho,k} \left(\frac{\tau_k \rho x}{\sqrt{8}}+ \xi_{\rho,k} \right)+ \varphi_{\rho}\left(\frac{\tau_k \rho x}{\sqrt{8}}+ \xi_{\rho,k} \right)}}{\left( 1+\frac{\vert x \vert^2}{8}\right)^2}\tilde{v}^{(k)}_{\rho,j}(x) dx + \\
&+ \alpha_k 2 \rho^2 \frac{\tau_k^2 \rho^2}{8} \tau_k^4 \rho^4 \int_{B_\frac{\sqrt{8}R}{\tau_k \rho}(0)} e^{-\ell_{\rho,k} \left(\frac{\tau_k \rho x}{\sqrt{8}}+ \xi_{\rho,k} \right)- \varphi_{\rho}\left(\frac{\tau_k \rho x}{\sqrt{8}}+ \xi_{\rho,k} \right)}\left( 1+\frac{\vert x \vert^2}{8}\right)^2 \tilde{v}^{(k)}_{\rho,j}(x) dx\\
&\hbox{using Lebesgue theorem}\\
&= 2 \alpha_k \int_{\R^2} \frac{1}{\left( 1+\frac{\vert x \vert^2}{8}\right)^2} \left( \frac{s_{1,j}^{(k)} x_1 + s_{2,j}^{(k)} x_2}{8+\vert x \vert^2} + t_j^{(k)} \frac{8-\vert x \vert^2}{8+ \vert x \vert^2} \right)dx +o\left( \frac{1}{\log(\rho)} \right)\\
&=o\left( \frac{1}{\log(\rho)} \right).
\end{align*}

For the third integral we have
\begin{align*}
&\rho^2 \int_{B_R(\xi_k)} \left[(x-\xi_k) \cdot \nabla u_\rho (x)\right] \left(e^{u_{\rho}}(x) + e^{-u_{\rho}}(x) \right) v_{\rho,j}(x) dx\\
&= \frac{1+o(1)}{8 \tau_k^2} \int_{B_{\frac{\sqrt{8}R}{\tau_k \rho}}(\xi_k-\xi_{\rho,k})} \frac{e^{\ell_{\rho,k} \left( \frac{\tau_k \rho x}{\sqrt{8}} + \xi_{\rho,k} \right) + \varphi_{\rho} \left( \frac{\tau_k \rho x}{\sqrt{8}} + \xi_{\rho,k} \right) }}{\left( 1 + \frac{\vert x \vert^2}{8} \right)^2} \tilde{v}_{\rho,j}^{(k)}(x)\\
& \left(- \frac{\alpha_k}{2} \frac{\vert x \vert^2 }{1 + \frac{\vert x \vert^2}{8}} + \frac{\tau_k \rho}{\sqrt{8}} x \cdot \nabla \ell_{\rho,k}\left( \frac{\tau_k \rho x}{\sqrt{8}} + \xi_{\rho,k} \right) +\frac{\tau_k \rho}{\sqrt{8}} x \cdot \nabla \varphi_{\rho}\left( \frac{\tau_k \rho x}{\sqrt{8}} + \xi_{\rho,k} \right)  \right) dx + \\
&+ \frac{\tau_k^6 \rho^8}{8} (1+o(1)) \int_{B_{\frac{\sqrt{8}R}{\tau_k \rho}}(\xi_k-\xi_{\rho,k})} e^{-\ell_{\rho,k} \left( \frac{\tau_k \rho x}{\sqrt{8}} + \xi_{\rho,k} \right) - \varphi_{\rho} \left( \frac{\tau_k \rho x}{\sqrt{8}} + \xi_{\rho,k} \right) }\left( 1 + \frac{\vert x \vert^2}{8} \right)^2 \tilde{v}_{\rho,j}^{(k)}(x)\\
& \left(- \frac{\alpha_k}{2} \frac{\vert x \vert^2 }{1 + \frac{\vert x \vert^2}{8}} + \frac{\tau_k \rho}{\sqrt{8}} x \cdot \nabla \ell_{\rho,k}\left( \frac{\tau_k \rho x}{\sqrt{8}} + \xi_{\rho,k} \right) +\frac{\tau_k \rho}{\sqrt{8}} x \cdot \nabla \varphi_{\rho}\left( \frac{\tau_k \rho x}{\sqrt{8}} + \xi_{\rho,k} \right)  \right) dx\\
&\hbox{using Lebesgue theorem}\\
&=- \frac{\alpha_k}{2} \int_{\R^2} \frac{\vert x \vert^2}{\left(1+\frac{\vert x \vert^2}{8} \right)^3} \left( \frac{s_{1,j}^{(k)} x_1 + s_{2,j}^{(k)} x_2}{8+\vert x \vert^2} + t_j^{(k)} \frac{8-\vert x \vert^2}{8+ \vert x \vert^2} \right) dx + o\left( 1 \right)\\
&=\frac{16}{3} \pi \alpha_k t_j^{(k)} +o \left( 1 \right),
\end{align*}
where we used the fact that
\begin{align*}
& \rho^8 \Biggl| \int_{B_{\frac{\sqrt{8}R}{\tau_k \rho}}(\xi_k-\xi_{\rho,k})} e^{-\ell_{\rho,k} \left( \frac{\tau_k \rho x}{\sqrt{8}} + \xi_{\rho,k} \right) - \varphi_{\rho} \left( \frac{\tau_k \rho x}{\sqrt{8}} + \xi_{\rho,k} \right) }\left( 1 + \frac{\vert x \vert^2}{8} \right)^2 \tilde{v}_{\rho,j}^{(k)}(x) \frac{\tau_k \rho}{\sqrt{8}} x \cdot\\
&\cdot \nabla \varphi_{\rho}\left( \frac{\tau_k \rho x}{\sqrt{8}} + \xi_{\rho,k} \right) dx \Biggr|\\
&\leqslant C \rho^9 \int_{B_{\frac{\sqrt{8}R}{\tau_k \rho}}(\xi_k-\xi_{\rho,k})} \left( 1 + \frac{\vert x \vert^2}{8} \right)^2 \vert x \vert \left| \nabla \varphi_{\rho}\left( \frac{\tau_k \rho x}{\sqrt{8}} + \xi_{\rho,k} \right) \right| dx \\
&\hbox{using Cauchy-Schwarz}\\
&=C\rho^9 \left( \int_{B_{\frac{\sqrt{8}R}{\tau_k \rho}}(\xi_k-\xi_{\rho,k})} \left( \left( 1 + \frac{\vert x \vert^2}{8} \right)^2 \vert x \vert \right)^{2} dx \right)^{\frac{1}{2}}\\
& \left( \int_{B_{\frac{\sqrt{8}R}{\tau_k \rho}}(\xi_k-\xi_{\rho,k})} \left| \nabla \varphi_{\rho}\left( \frac{\tau_k \rho x}{\sqrt{8}} + \xi_{\rho,k} \right) \right|^{2} dx \right)^{\frac{1}{2}} \\
&\leqslant C \rho^2 \rho^{\beta} \vert \log(\rho)\vert\\
&=o\left( 1 \right),
\end{align*}
and that
\begin{align*}
&\frac{1}{8 \tau_k^2} \Biggl| \int_{B_{\frac{\sqrt{8}R}{\tau_k \rho}}(\xi_k-\xi_{\rho,k})} \frac{e^{\ell_{\rho,k} \left( \frac{\tau_k \rho x}{\sqrt{8}} + \xi_{\rho,k} \right) + \varphi_{\rho} \left( \frac{\tau_k \rho x}{\sqrt{8}} + \xi_{\rho,k} \right) }}{\left( 1 + \frac{\vert x \vert^2}{8} \right)^2} \tilde{v}_{\rho,j}^{(k)}(x)\frac{\tau_k \rho}{\sqrt{8}} x \cdot\\
&\cdot \nabla \varphi_{\rho}\left( \frac{\tau_k \rho x}{\sqrt{8}} + \xi_{\rho,k} \right) dx \Biggr| \\
&\leqslant C \rho \int_{B_{\frac{\sqrt{8}R}{\tau_k \rho}}(\xi_k-\xi_{\rho,k})} \frac{\vert x \vert}{\left( 1 + \frac{\vert x \vert^2}{8} \right)^2} \left| \nabla \varphi_{\rho}\left( \frac{\tau_k \rho x}{\sqrt{8}} + \xi_{\rho,k} \right) \right| dx \\
&\hbox{using Cauchy-Schwarz}\\
&\leqslant C \rho \left( \int_{\R^2} \left( \frac{\vert x \vert}{\left( 1 + \frac{\vert x \vert^2}{8} \right)^2} \right)^{2} dx \right)^{\frac{1}{2}} \left( \int_{B_{\frac{\sqrt{8}R}{\tau_k \rho}}(\xi_k-\xi_{\rho,k})} \left| \nabla \varphi_{\rho}\left( \frac{\tau_k \rho x}{\sqrt{8}} + \xi_{\rho,k} \right) \right|^{2} dx \right)^{\frac{1}{2}} \\
&\leqslant C \rho^{\beta} \vert \log(\rho)\vert\\
&=o\left( 1 \right).
\end{align*}
We then have 
$$(1-\mu_{\rho,j}) \left(1+o(1) \right) \frac{16}{3} \pi \alpha_k t_j^{(k)}+o\left( \frac{1}{\log(\rho)} \right)+o_R(1)=8 \pi \frac{1}{\log(\rho)} \alpha_k t_j^{(k)} + o_R(1).$$
Since this relation holds true for every $R$, we have
\begin{equation} \label{due}
(1-\mu_{\rho,j})=\left( \frac{3}{2} \frac{1}{\log(\rho)} + o\left( \frac{1}{\log(\rho)} \right) \right) (1+o(1))< \frac{2}{\log(\rho)}.
\end{equation}
We supposed by assumption that 
$$(1-\mu_{\rho,j}) \geqslant -C\rho^2,$$
so that $$-C \rho^2 \log(\rho) >2,$$
which gives a contradiction as $C \rho^2 \log(\rho)=o(1)$.
Furthermore, by \eqref{due} we have that if $t_j^{(k)} \neq \textbf{0}$ for some $k=1, \ldots, m$, then we have
$$\mu_{\rho,j}=1-\frac{3}{2}\frac{1}{\log(\rho)}+o(1)>1,$$
for $\rho$ sufficiently small.
Finally, by Proposition \ref{t1} $\tilde{v}_{\rho, j}^{(k)}$ can not be zero for all $k$.\endproof

We prove the following lemma.

\begin{lem} \label{lemma}
For any domain $\Omega' \subset \Omega$, and any eigenfunction $v_{\rho,n}$, the following integral identity holds.
\begin{equation} \label{formula}
\begin{split}
&(1-\mu_{\rho,n}) \rho^2 \int_{\Omega'} \left( e^{u_{\rho}(x)} + e^{-u_{\rho}(x)} \right) \frac{\partial u_{\rho}}{ \partial x_j}(x) v_{\rho,n}(x) dx\\ 
&= \int_{\partial \Omega'} \frac{\partial v_{\rho,n}}{\partial \nu }(x) \frac{\partial u_{\rho}}{ \partial x_j}(x) -v_{\rho,n}(x)\frac{\partial^2 u_{\rho}}{ \partial \nu \partial x_j}(x) d\sigma_x,
\end{split}
\end{equation}
where we denoted by $\nu$ the external normal vector to $\partial \Omega'$.
\end{lem}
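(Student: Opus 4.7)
The identity is a classical Green's-type (or second-integration-by-parts) relation between two solutions of closely related elliptic equations, so my plan is to derive it in three short steps.

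First I would differentiate the equation satisfied by $u_\rho$, namely $-\Delta u_\rho = \rho^2(e^{u_\rho}-e^{-u_\rho})$, with respect to $x_j$. Since the nonlinearity is smooth in $u$, differentiation commutes with $\Delta$ on the solution, yielding
\begin{equation*}
-\Delta\!\left(\frac{\partial u_\rho}{\partial x_j}\right) = \rho^2\bigl(e^{u_\rho}+e^{-u_\rho}\bigr)\frac{\partial u_\rho}{\partial x_j}\quad\text{in }\Omega.
\end{equation*}
This is exactly the linearized operator, but now acting on $\partial_{x_j}u_\rho$ with eigenvalue parameter equal to $1$, rather than $\mu_{\rho,n}$ as for $v_{\rho,n}$.

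Next I would apply Green's second identity on $\Omega'$ to the pair $w=\partial u_\rho/\partial x_j$ and $\phi=v_{\rho,n}$:
\begin{equation*}
\int_{\Omega'}\bigl(\phi\,\Delta w - w\,\Delta\phi\bigr)\,dx = \int_{\partial\Omega'}\!\left(\phi\,\frac{\partial w}{\partial\nu} - w\,\frac{\partial\phi}{\partial\nu}\right)d\sigma.
\end{equation*}
Substituting $\Delta w=-\rho^2(e^{u_\rho}+e^{-u_\rho})\partial_{x_j}u_\rho$ from the differentiated equation and $\Delta\phi=-\mu_{\rho,n}\rho^2(e^{u_\rho}+e^{-u_\rho})v_{\rho,n}$ from \eqref{autovvv} produces the left-hand side of \eqref{formula} as
\begin{equation*}
(\mu_{\rho,n}-1)\rho^2\!\int_{\Omega'}\!(e^{u_\rho}+e^{-u_\rho})\,\partial_{x_j}u_\rho\,v_{\rho,n}\,dx,
\end{equation*}
up to an overall sign, which I would absorb by noting $\partial w/\partial\nu=\partial^2 u_\rho/\partial\nu\partial x_j$ and $\partial\phi/\partial\nu=\partial v_{\rho,n}/\partial\nu$; comparing with the stated right-hand side fixes the sign convention.

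I do not foresee any genuine obstacle: the whole identity is linear-algebraic in character and relies only on the smoothness of $u_\rho$ and $v_{\rho,n}$ (guaranteed by elliptic regularity inside $\Omega$) and on $\Omega'$ being a nice subdomain so that the divergence theorem applies. The only point worth mild care is justifying that $\partial_{x_j}u_\rho\in C^2(\overline{\Omega'})$, which follows because both $u_\rho$ and the right-hand side $\rho^2(e^{u_\rho}-e^{-u_\rho})$ are smooth in $\Omega$ by standard bootstrapping, so that interior $C^3$ regularity of $u_\rho$ is available on any $\Omega'\Subset\Omega$, and analogously up to the boundary when $\Omega'$ touches $\partial\Omega$, where $u_\rho=v_{\rho,n}=0$ simplifies the boundary terms if needed.
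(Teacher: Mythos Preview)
Your proposal is correct and follows essentially the same approach as the paper: differentiate the equation for $u_\rho$ to see that $\partial_{x_j}u_\rho$ solves the linearized equation with parameter $1$, then combine this with the eigenvalue equation for $v_{\rho,n}$ via integration by parts. The only cosmetic difference is that you invoke Green's second identity directly, whereas the paper multiplies each equation by the other function, integrates by parts once, and subtracts; these are of course the same computation.
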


\proof
Differentiating \eqref{ese} with respect to $x_j$, for $j=1,2$, we get
$$-\Delta \frac{\partial u_{\rho}}{ \partial x_j}(x) = \rho^2 \left( e^{u_{\rho}(x)} + e^{-u_{\rho}(x)} \right) \frac{\partial u_{\rho}}{ \partial x_j}(x), \; \forall x \text{ in } \Omega.$$
Multiplying this expression by $v_{\rho,n}$, and integrating both sides of the equation we get
\begin{align*}
&\int_{\Omega'} \nabla \left( \frac{\partial u_{\rho}}{ \partial x_j} \right)(x) \nabla v_{\rho,n}(x) dx - \int_{\partial \Omega'} v_{\rho,n}(x)\frac{\partial^2 u_{\rho}}{ \partial \nu \partial x_j}(x) d \sigma_x\\
&= \rho^2 \int_{\Omega'} \left( e^{u_{\rho}(x)} + e^{-u_{\rho}(x)} \right) \frac{\partial u_{\rho}}{ \partial x_j}(x) v_{\rho,n}(x)dx.
\end{align*}
On the other hand, multiplying \eqref{autovvv} by $\frac{\partial u_{\rho}}{ \partial x_j}$ we get
\begin{align*}
&\int_{\Omega'} \nabla v_{\rho,n} \nabla \left( \frac{\partial u_{\rho}}{ \partial x_j}\right)(x) dx - \int_{\partial \Omega'} \frac{\partial v_{\rho,n}}{ \partial \nu}(x)\frac{\partial u_{\rho}}{ \partial x_j}(x) d \sigma_x\\
&= \mu_{\rho,n} \rho^2 \int_{\Omega'} \left( e^{u_{\rho}(x)} + e^{-u_{\rho}(x)} \right) \frac{\partial u_{\rho}}{ \partial x_j}(x) v_{\rho,n}(x) dx.
\end{align*}
Taking the difference of these two expressions we get the conclusion.
\endproof

We can now prove that the hypothesis of Theorem \ref{tre3} holds for the eigenvalues from $m+1$ to $3m$, and therefore the estimate provided by Lemma \ref{due2} does not hold for the corresponding eigenfunctions. The next two theorems show that the eigenvalues from $m+1$ to $3m$ go to one, give an estimate for the rate of convergence, and prove that the eigenvalues after $3m$ are bigger than one, thus providing an estimate from above for the Morse index.
 
\begin{lem} \label{t6}
We have that $$\mu_{\rho,m+1} <1+C\rho^2$$ and $$\mu_{\rho,m+1} \to 1.$$ 
\end{lem}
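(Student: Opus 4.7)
The plan is to produce a test function whose Rayleigh quotient is at most $1+C\rho^2$, and then invoke the framework already developed to upgrade this to $\mu_{\rho,m+1}\to 1$.

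For the first step I would use the well-known fact that the spatial derivatives of the bubble $U_{\tau,\xi}$ solve the linearized equation at eigenvalue $1$: differentiating $-\Delta U_k=\rho^2 e^{U_k}$ with respect to $\xi$ (equivalently, with respect to $x_i$) gives $-\Delta\bigl(\partial_{x_i}U_k\bigr)=\rho^2 e^{U_k}\partial_{x_i}U_k$. Correspondingly I take
\[
\Psi_\rho(x)=\partial_{x_1}PU_1(x)\,\psi_{\rho,1}(x)+\sum_{j=1}^{m}a_{\rho,j}v_{\rho,j}(x),\qquad a_{\rho,j}=-\frac{\bigl(\partial_{x_1}PU_1\,\psi_{\rho,1},\,v_{\rho,j}\bigr)_{H^1_0}}{\|v_{\rho,j}\|_{H^1_0}^2},
\]
so that $\Psi_\rho\perp v_{\rho,j}$ in $H^1_0(\Omega)$ for $j=1,\dots,m$, which is exactly what the Rayleigh characterization of $\mu_{\rho,m+1}$ requires.

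Next I would estimate the correction coefficients $a_{\rho,j}$. Rescaling $\partial_{x_1}PU_1\psi_{\rho,1}$ around $\xi_{\rho,k}$ produces a function of the form $\sqrt{2}\,y_1/\bigl(\tau_1\rho(1+|y|^2/8)\bigr)$ times a bounded factor, which is \emph{odd} in $y_1$; since by Theorem \ref{main1} each rescaled eigenfunction $\tilde v^{(k)}_{\rho,j}$ converges to the constant $C_j^{(k)}$, the dominant contribution in the inner product vanishes by symmetry. A careful computation, in the style of (\ref{io1}), should yield $a_{\rho,j}=o(1/\rho)$ (or better), so the orthogonalization is essentially harmless.

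The core of the proof is the Rayleigh quotient computation. For the numerator I use the identity
\[
\int_\Omega|\nabla(\partial_{x_1}U_1\,\psi_{\rho,1})|^2\,dx=\rho^2\!\int_\Omega e^{U_1}(\partial_{x_1}U_1)^2\psi_{\rho,1}^2\,dx+\text{boundary/cutoff terms},
\]
obtained by multiplying $-\Delta\partial_{x_1}U_1=\rho^2 e^{U_1}\partial_{x_1}U_1$ by $(\partial_{x_1}U_1)\psi_{\rho,1}^2$ and integrating by parts; the cutoff terms live on $B_{2\epsilon}(\xi_{\rho,1})\setminus B_\epsilon(\xi_{\rho,1})$, where $\partial_{x_1}U_1$ and its gradient are of order $\rho^2$, producing an $O(\rho^2)$ contribution. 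Replacing $U_1$ by $PU_1$ introduces the harmonic correction $8\pi\partial_{x_1}H(x,\xi_{\rho,1})+O(\rho^2)$ from Theorem \ref{unoo}, again contributing at worst $O(\rho^2)$.

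For the denominator I use $e^{u_\rho}+e^{-u_\rho}=e^{U_1+\ell_{\rho,1}+\varphi_\rho}+e^{-U_1-\ell_{\rho,1}-\varphi_\rho}$ (with sign conventions adjusted by $\alpha_1$), perform the scaling $x=\xi_{\rho,1}+\tau_1\rho y/\sqrt{8}$, and use the Lebesgue theorem exactly as in the preliminaries section. After cancellation of the common factor coming from the rescaled $(\partial_{x_1}U_1)^2$, the ratio numerator/denominator equals $1+O(\rho^2)$. The contribution from the cross term $a_{\rho,j}v_{\rho,j}$ is negligible (of order $a_{\rho,j}^2\mu_{\rho,j}=o(\rho^2)$) thanks to the estimate on $a_{\rho,j}$ and the fact that $\mu_{\rho,j}\to 0$ for $j\le m$.

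Putting these pieces together yields $\mu_{\rho,m+1}\le 1+C\rho^2$. The convergence $\mu_{\rho,m+1}\to 1$ is then immediate: by Proposition \ref{t3} we have $\mu_{\rho,m+1}\not\to 0$, and by Proposition \ref{uno1}, any $j>m$ with $\mu_{\rho,j}\le 1+o(1)$ satisfies $\mu_{\rho,j}\to 1$, so the upper bound $1+C\rho^2=1+o(1)$ closes the argument.

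The main obstacle will be the bookkeeping of the $O(\rho^2)$ error terms in the Rayleigh quotient: both the cutoff error from $\psi_{\rho,1}$ and the discrepancy between $U_1$ and $PU_1$ must be controlled at the sharp $\rho^2$ level (mere $o(1)$ would only give $\mu_{\rho,m+1}\to 1$ without the quantitative bound needed to invoke Theorem \ref{tre3}). Controlling the $e^{-u_\rho}$ branch uniformly in the blow-up region, and checking that $\varphi_\rho$ contributes only negligibly (via the $H^1$ bound $\|\varphi_\rho\|_{H^1_0}\le R\rho^{(2-p)/p}|\log\rho|$ from Theorem \ref{unoo} and a Cauchy--Schwarz argument analogous to the one already used in the proof of Theorem \ref{tre3}), are the delicate points.
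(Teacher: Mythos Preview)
Your overall strategy is right, but the choice of test function creates a gap that you cannot close with the available estimates. The paper uses $\partial_{x_1}u_\rho\,\psi_{\rho,1}$ rather than $\partial_{x_1}PU_1\,\psi_{\rho,1}$, and this difference is decisive. Differentiating the \emph{full} equation gives the exact identity
\[
-\Delta\Bigl(\frac{\partial u_\rho}{\partial x_1}\Bigr)=\rho^2\bigl(e^{u_\rho}+e^{-u_\rho}\bigr)\frac{\partial u_\rho}{\partial x_1},
\]
so that after the standard integration by parts the numerator becomes \emph{literally} the denominator plus the cutoff term $\int(\partial_{x_1}u_\rho)^2|\nabla\psi_{\rho,1}|^2$, which is $O(1)$ on the annulus (not $O(\rho^2)$ as you wrote; note $\partial_{x_1}U_1\sim -4(x_1-\xi_{1,1})/|x-\xi_1|^2=O(1)$ there). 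Since the denominator is $\ge C/\rho^2$, this immediately yields the ratio $\le 1+C\rho^2$. With your test function the numerator carries the weight $\rho^2 e^{U_1}$ while the denominator carries $\rho^2(e^{u_\rho}+e^{-u_\rho})$; their quotient is $1+O(\varphi_\rho)+O(\ell_{\rho,1}-\ell_1)$ after rescaling, and since the paper only provides $\varphi_\rho\to 0$ in $C^{0,\alpha}$ (no rate), the resulting discrepancy in the Rayleigh quotient is $o(1)/\rho^2$, giving only $\mu_{\rho,m+1}\le 1+o(1)$. The $H^1$ bound on $\varphi_\rho$ does not help here because $\varphi_\rho$ sits inside the exponential and is multiplied by a weight of size $1/\rho^2$ in $L^1$, so a Cauchy--Schwarz argument against $\nabla\varphi_\rho$ is not available.

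There is a second problem with the orthogonalization. Your symmetry argument gives at best $a_{\rho,j}=o(1/\rho)$, and you then assert that $a_{\rho,j}^2\mu_{\rho,j}=o(\rho^2)$; but $\mu_{\rho,j}\sim 1/|\log\rho|$, so in fact $a_{\rho,j}^2\mu_{\rho,j}=o\bigl(1/(\rho^2|\log\rho|)\bigr)$, which blows up and again kills the $C\rho^2$ bound. The paper obtains the much stronger $a_{\rho,j}=o(1)$ by a different mechanism: it applies the Pohozaev-type identity of Lemma~\ref{lemma} on $B_\epsilon(\xi_{\rho,k})$ together with the outer expansion of Lemma~\ref{t4} to show that $\frac{1}{\mu_{\rho,j}}\bigl(\partial_{x_1}u_\rho\,\psi_{\rho,k},v_{\rho,j}\bigr)_{H^1_0}=o(1)$, and then divides by $\|v_{\rho,j}\|_{H^1_0}^2\sim\mu_{\rho,j}$. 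The oddness observation alone does not deliver this rate.
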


\proof 
Set 
$$\Psi_{\rho}(x)=\frac{\partial u_{\rho}}{\partial x_1}(x) \psi_{\rho,k}(x) + \sum_{j=1}^{m} a_{\rho,j} v_{\rho, j}(x),$$ 
where
\begin{equation} \label{ajk}
a_{\rho,j} = - \frac{\left( \frac{\partial u_\rho}{\partial x_1} \psi_{\rho,k}, v_{\rho,j} \right)_{H_0^1(\Omega)}}{(v_{\rho,j},v_{\rho,j})_{H_0^1(\Omega)}},
\end{equation}
with $k=1$.
With this choice for the $a_{\rho,j}$'s we immediately get that $\Psi_{\rho}$ is orthogonal to $v_{\rho,j}$ for $j=1, \ldots, m$.
By Proposition \ref{t3} we have that $\mu_{\rho,k} = o(1)$ for any $k=1,\ldots, m$. Let us prove that $a_{\rho,j} =o(1)$.
Let us start by studying the numerator
\begin{equation} \label{N}
\begin{split}
&\frac{1}{\mu_{\rho,j}}\left( \frac{\partial u_\rho}{\partial x_1} \psi_{\rho,k}, v_{\rho,j} \right)_{H_0^1(\Omega)}\\
&=  \rho^2 \int_{B_{2\epsilon}(\xi_{\rho,k}) \setminus B_{\epsilon}(\xi_{\rho,k})} \left(e^{u_{\rho}(x)}+e^{-u_{\rho(x)}} \right) v_{\rho,j}(x) \frac{\partial u_\rho}{\partial x_1}(x) \psi_{\rho,k}(x) dx+\\
&+  \rho^2 \int_{B_{\epsilon}(\xi_{\rho,k})} \left(e^{u_{\rho}(x)}+e^{-u_{\rho(x)}} \right) v_{\rho,j}(x) \frac{\partial u_\rho}{\partial x_1}(x) dx.
\end{split}
\end{equation}
Let us study these integral separately. For the first integral we have
\begin{equation} \label{i1}
\rho^2 \int_{B_{2\epsilon}(\xi_{\rho,k}) \setminus B_{\epsilon}(\xi_{\rho,k})} \left(e^{u_{\rho}(x)}+e^{-u_{\rho(x)}} \right) v_{\rho,j}(x) \frac{\partial u_\rho}{\partial x_1}(x) \psi_{\rho,k}(x) dx =o(1).
\end{equation}

By Lemma \ref{lemma} and Lemma \ref{t4} we have that
\begin{equation} \label{i2}
\begin{split}
&\rho^2 \int_{B_{\epsilon}(\xi_{\rho,k})} \left(e^{u_{\rho}(x)}+e^{-u_{\rho(x)}} \right) v_{\rho,j}(x) \frac{\partial u_\rho}{\partial x_1}(x) dx \\
&=\frac{\mu_{\rho,j}}{1-\mu_{\rho,j}} \Bigg( \int_{\partial B_{\epsilon}(\xi_{\rho,k}))} \Bigg[ \frac{\partial}{\partial \nu }\left(8 \pi \sum_{k=1}^{m}C_j^{(k)}G(x,\xi_{k}) \right) \frac{\partial }{ \partial x_1}\left(8\pi \sum_{k=1}^{m} \alpha_k G(x,\xi_{k}) \right)+\\
& - 8 \pi \left( \sum_{k=1}^{m}C_j^{(k)}G(x,\xi_{k}) \right) \frac{\partial^2 }{ \partial \nu \partial x_1}\left(8\pi \sum_{k=1}^{m} \alpha_k G(x,\xi_{k}) \right) \Bigg] d\sigma_x + o(1) \Bigg)\\
&=o(1).
\end{split}
\end{equation}
Using \eqref{i1} and \eqref{i2} in \eqref{N} we get
\begin{equation} \label{NN}
\frac{1}{\mu_{\rho,j}}\left( \frac{\partial u_\rho}{\partial x_1} \psi_{\rho,k}, v_{\rho,j} \right)_{H_0^1(\Omega)} = o(1).
\end{equation}
By \eqref{limitato} we have that
\begin{equation} \label{D}
(v_{\rho,j},v_{\rho,j})_{H_0^1(\Omega)}= 8 \pi \mu_{\rho,j} \left(\sum_{k=1}^m \left(C_j^{(k)} \right)^2 +o(1) \right).
\end{equation}
Using \eqref{NN} and \eqref{D} in \eqref{ajk} we get that $a_{\rho,j} = o(1)$.
By the formula for the Rayleigh quotient we have that
\begin{equation} \label{RR2}
\begin{split}
\mu_{\rho,m} &= \inf_{\substack{v \in H_0^1(\Omega), v \neq 0,\\ v \perp v_{\rho,1}, \ldots, v \perp v_{\rho,m}}} \frac{\int_{\Omega} \vert \nabla v \vert^2 dx}{\rho^2 \int_{\Omega} \left(e^{u_{\rho}(x)} +e^{-u_{\rho}(x)} \right) v^2(x) dx}\\
&\leqslant \frac{\int_{\Omega} \vert \nabla \Psi_{\rho}(x) \vert^2 dx}{\rho^2 \int_{\Omega} \left(e^{u_{\rho}(x)} +e^{-u_{\rho}(x)} \right) \Psi^2_{\rho}(x)dx }\\
&=\frac{\int_{\Omega} \vert \nabla \left( \frac{\partial u_{\rho}}{\partial x_1}(x) \psi_{\rho,k}(x) + \sum_{j=1}^{m} a_{\rho,j} v_{\rho, j}(x) \right) \vert^2 dx}{\rho^2 \int_{\Omega} \left(e^{u_{\rho}(x)} +e^{-u_{\rho}(x)} \right) \left( \frac{\partial u_{\rho}}{\partial x_1}(x) \psi_{\rho,k}(x) + \sum_{j=1}^{m} a_{\rho,j} v_{\rho, j}(x) \right)^2 dx }.\\
\end{split}
\end{equation}
Let us start by studying the numerator.
\begin{align*}
&\int_{\Omega} \left| \nabla \left( \frac{\partial u_{\rho}}{\partial x_1}(x) \psi_{\rho,k}(x) + \sum_{j=1}^{m} a_{\rho,j} v_{\rho, j}(x) \right) \right|^2 dx\\
&=\int_{\Omega} \left| \nabla \left( \frac{\partial u_{\rho}}{\partial x_1}(x) \psi_{\rho,k}(x) \right) \right|^2 dx + \sum_{j=1}^m  a_{\rho,j}^2 \int_{\Omega} \left| \nabla v_{\rho, j}(x)  \right|^2 dx + 2 \sum_{j=1}^m a_{\rho,j} \left( \frac{\partial u_{\rho}}{\partial x_1} \psi_{\rho,k}, v_{\rho, j}\right)_{H_0^1(\Omega)} \\
&=\int_{\Omega} \left| \nabla \left( \frac{\partial u_{\rho}}{\partial x_1}(x) \psi_{\rho,k}(x) \right) \right|^2 dx + o(1)
\end{align*}
In fact, by \eqref{limitato}, and since $a_{\rho,k} = o(1)$, we have that 
\begin{equation} \label{risu1}
a_{\rho,k}^2 \int_{\Omega} \left| \nabla v_{\rho, k}(x)  \right|^2 dx = 8 \pi a_{\rho,k}^2 \mu_{j} \left( \sum_{k=1}^m \left(C_j^{(k)} \right)^2 + o(1) \right) =o(1),
\end{equation}
and by \eqref{NN}, and since $a_{\rho,k} = o(1)$, we have that
\begin{equation} \label{risu2}
a_{\rho,j} \left( \frac{\partial u_{\rho}}{\partial x_1} \psi_{\rho,k}, v_{\rho, j}\right)_{H_0^1(\Omega)} = a_{\rho,j} \mu_{\rho,j} o(1)=o(1).
\end{equation}
We remark that
\begin{align*}
-\Delta \frac{\partial u_{\rho}}{\partial x_1}(x) = \frac{\partial}{\partial x_1} \left(\rho^2 \left(e^{u_{\rho}(x)} -e^{-u_{\rho}(x)} \right) \right)= \rho^2 \left(e^{u_{\rho}(x)} + e^{-u_{\rho}(x)} \right) \frac{\partial u_{\rho}}{\partial x_1}(x), \quad \forall x \in \Omega.
\end{align*}
Let us go back to the numerator.
\begin{align*}
&\int_{\Omega} \left| \nabla \left( \frac{\partial u_{\rho}}{\partial x_1}(x) \psi_{\rho,k}(x) \right) \right|^2 dx\\
&=-2 \int_{\Omega} \frac{\partial u_{\rho}}{\partial x_1}(x) \psi_{\rho,k}(x) \nabla \frac{\partial u_{\rho}}{\partial x_1}(x) \nabla \psi_{\rho,k}(x) dx - \int_{\Omega} \Delta \left(\frac{\partial u_{\rho}}{\partial x_1} \right)(x) \frac{\partial u_{\rho}}{\partial x_1}(x) \psi_{\rho,k}^2(x) dx+\\
&+ \int_{\Omega} \frac{\partial u_{\rho}}{\partial x_1}^2(x) \left| \nabla \psi_{\rho,k}(x) \right|^2 dx + 2 \int_{\Omega} \frac{\partial u_{\rho}}{\partial x_1}(x) \psi_{\rho,k}(x) \nabla \frac{\partial u_{\rho}}{\partial x_1}(x) \nabla \psi_{\rho,k}(x) dx\\
&=\rho^2 \int_{\Omega} \left(e^{u_{\rho}(x)} + e^{-u_{\rho}(x)} \right) \frac{\partial u_{\rho}}{\partial x_1}^2(x) \psi_{\rho,k}^2(x) dx +\int_{\Omega} \frac{\partial u_{\rho}}{\partial x_1}^2(x) \left| \nabla \psi_{\rho,k}(x) \right|^2 dx,
\end{align*}
hence
\begin{equation} \label{N1}
\begin{split}
&\int_{\Omega} \left| \nabla \left( \frac{\partial u_{\rho}}{\partial x_1}(x) \psi_{\rho,k}(x) + \sum_{k=1}^{m} a_{\rho,k} v_{\rho, k}(x) \right) \right|^2 dx\\
&= \rho^2 \int_{\Omega} \left(e^{u_{\rho}(x)} + e^{-u_{\rho}(x)} \right) \frac{\partial u_{\rho}}{\partial x_1}^2(x) \psi_{\rho,k}^2(x) dx +\int_{\Omega} \frac{\partial u_{\rho}}{\partial x_1}^2(x) \left| \nabla \psi_{\rho,k}(x) \right|^2 dx + o(1).
\end{split}
\end{equation}
Let us now study the denominator
\begin{equation}\label{D1}
\begin{split}
&\rho^2 \int_{\Omega} \left(e^{u_{\rho}(x)} +e^{-u_{\rho}(x)} \right) \left( \frac{\partial u_{\rho}}{\partial x_1}(x) \psi_{\rho,k}(x) + \sum_{j=1}^{m} a_{\rho,j} v_{\rho, j}(x) \right)^2 dx\\
&= \rho^2  \int_{\Omega} \left(e^{u_{\rho}(x)} +e^{-u_{\rho}(x)} \right) \frac{\partial u_{\rho}}{\partial x_1}^2(x) \psi_{\rho,k}^2(x) dx +\\
&+ \sum_{j=1}^m a_{\rho,j}^2 \rho^2 \int_{\Omega} \left(e^{u_{\rho}(x)} +e^{-u_{\rho}(x)} \right) v_{\rho, j}^2(x) dx +2 \sum_{j=1}^m \frac{a_{\rho,j}}{\mu_{\rho,j}} \left(\frac{\partial u_{\rho}}{\partial x_1} \psi_{\rho,k} , v_{\rho, j} \right)_{H_0^1(\Omega)}\\
&=\rho^2  \int_{\Omega} \left(e^{u_{\rho}(x)} +e^{-u_{\rho}(x)} \right) \frac{\partial u_{\rho}}{\partial x_1}^2(x) \psi_{\rho,k}^2(x) dx +o(1).
\end{split}
\end{equation}
In fact, by \eqref{limitato}, and since $a_{\rho,k} = o(1)$, we have
\begin{align*}
a_{\rho,k}^2 \rho^2 \int_{\Omega} \left(e^{u_{\rho}(x)} +e^{-u_{\rho}(x)} \right) v_{\rho, k}^2(x) dx= 8 \pi a_{\rho,j}^2 \left( \sum_{k=1}^m \left(C_j^{(k)} \right)^2 +o(1)\right)=o(1),
\end{align*}
and by \eqref{NN}, and since $a_{\rho,k} = o(1)$, we have
$$\frac{a_{\rho,j}}{\mu_{\rho,j}} \left(\frac{\partial u_{\rho}}{\partial x_1} \psi_{\rho,k} , v_{\rho, j} \right)_{H_0^1(\Omega)}=o(1).$$
Using \eqref{N1} and \eqref{D1} in \eqref{RR2}, we get
\begin{align*}
\mu_{\rho,m} &\leqslant \frac{\rho^2 \int_{\Omega} \left(e^{u_{\rho}(x)} + e^{-u_{\rho}(x)} \right) \frac{\partial u_{\rho}}{\partial x_1}^2(x) \psi_{\rho,k}^2(x) dx +\int_{\Omega} \frac{\partial u_{\rho}}{\partial x_1}^2(x) \vert \nabla \psi_{\rho,k}(x) \vert^2 dx + o(1)}{\rho^2  \int_{\Omega} \left(e^{u_{\rho}(x)} +e^{-u_{\rho}(x)} \right) \frac{\partial u_{\rho}}{\partial x_1}^2(x) \psi_{\rho,k}^2(x) dx +o(1)}\\
&=1+ \frac{\int_{\Omega} \frac{\partial u_{\rho}}{\partial x_1}^2(x) \vert \nabla \psi_{\rho,k}(x) \vert^2 dx + o(1)}{\rho^2  \int_{\Omega} \left(e^{u_{\rho}(x)} +e^{-u_{\rho}(x)} \right) \frac{\partial u_{\rho}}{\partial x_1}^2(x) \psi_{\rho,k}^2(x) dx +o(1)}.
\end{align*}
It remains to prove that
$$\frac{\int_{\Omega} \frac{\partial u_{\rho}}{\partial x_1}^2(x) \vert \nabla \psi_{\rho,k}(x) \vert^2 dx + o(1)}{\rho^2  \int_{\Omega} \left(e^{u_{\rho}(x)} +e^{-u_{\rho}(x)} \right) \frac{\partial u_{\rho}}{\partial x_1}^2(x) \psi_{\rho,k}^2(x) dx +o(1)} \leqslant C \rho^2.$$
For the numerator we have
$$\int_{\Omega} \frac{\partial u_{\rho}}{\partial x_1}^2(x) \vert \nabla \psi_{\rho,k}(x) \vert^2 dx  = \int_{B_{2\epsilon}(\xi_{\rho,k}) \setminus B_{\epsilon}(\xi_{\rho,k})} \frac{\partial u_{\rho}}{\partial x_1}^2(x) \vert \nabla \psi_{\rho,k}(x) \vert^2 dx \leqslant C.$$
For the denominator we have
\begin{equation} \label{appox}
\begin{split}
&\rho^2  \int_{\Omega} \left(e^{u_{\rho}(x)} +e^{-u_{\rho}(x)} \right) \frac{\partial u_{\rho}}{\partial x_1}^2(x) \psi_{\rho,k}^2(x) dx \\
&\geqslant \rho^2 \frac{\tau_k^2 \rho^2}{8} \frac{1}{\tau_k^4 \rho^4} \int_{B_{\frac{\sqrt{8}\epsilon}{\tau_k \rho}}(0)} \frac{e^{\ell_{\rho,k} \left(\frac{\tau_k \rho x}{\sqrt{8}} + \xi_{\rho,k} \right) + \varphi_{\rho}\left(\frac{\tau_k \rho x}{\sqrt{8}} + \xi_{\rho,k} \right)}}{\left(1+ \frac{\vert x \vert^2}{8}\right)^2}\\
&\left(-  \frac{4\alpha_k}{\sqrt{8}\tau_k \rho}\frac{x_1}{1+\frac{\vert x \vert^2}{8}} +\frac{\partial \ell_{\rho,k}}{\partial x_1}  \left(\frac{\tau_k \rho x}{\sqrt{8}} + \xi_{\rho,k} \right) + \frac{\partial \varphi_{\rho}}{\partial x_1} \left(\frac{\tau_k \rho x}{\sqrt{8}} + \xi_{\rho,k} \right) \right)^2 dx \\
&\geqslant \frac{2}{\tau_k^2 \rho^2} \frac{1}{8\tau_k^2} \int_{B_{\frac{\sqrt{8}\epsilon}{\tau_k \rho}}(0)} \frac{e^{\ell_{\rho,k} \left(\frac{\tau_k \rho x}{\sqrt{8}} + \xi_{\rho,k} \right) + \varphi_{\rho}\left(\frac{\tau_k \rho x}{\sqrt{8}} + \xi_{\rho,k} \right)}}{\left(1+ \frac{\vert x \vert^2}{8}\right)^2} \frac{x_1^2}{\left( 1+\frac{\vert x \vert^2}{8} \right)^2} dx+ \\
&- \frac{\sqrt{8}\alpha_k}{\tau_k \rho} \frac{1}{8 \tau_k^2}\int_{B_{\frac{\sqrt{8}\epsilon}{\tau_k \rho}}(0)} \frac{e^{\ell_{\rho,k} \left(\frac{\tau_k \rho x}{\sqrt{8}} + \xi_{\rho,k} \right) + \varphi_{\rho}\left(\frac{\tau_k \rho x}{\sqrt{8}} + \xi_{\rho,k} \right)}}{\left(1+ \frac{\vert x \vert^2}{8}\right)^2} \frac{x_1}{1+\frac{\vert x \vert^2}{8}} \frac{\partial \ell_{\rho,k}}{\partial x_1}  \left(\frac{\tau_k \rho x}{\sqrt{8}} + \xi_{\rho,k} \right) dx+\\
&- \frac{\sqrt{8} \alpha_k}{\tau_k \rho} \frac{1}{8 \tau_k^2} \int_{B_{\frac{\sqrt{8}\epsilon}{\tau_k \rho}}(0)} \frac{e^{\ell_{\rho,k} \left(\frac{\tau_k \rho x}{\sqrt{8}} + \xi_{\rho,k} \right) + \varphi_{\rho}\left(\frac{\tau_k \rho x}{\sqrt{8}} + \xi_{\rho,k} \right)}}{\left(1+ \frac{\vert x \vert^2}{8}\right)^2} \frac{x_1}{1+\frac{\vert x \vert^2}{8}} \frac{\partial \varphi_{\rho}}{\partial x_1} \left(\frac{\tau_k \rho x}{\sqrt{8}} + \xi_{\rho,k} \right)dx \\
&\hbox{using Lebesgue theorem}\\
&=\frac{1}{\rho^2}\left(\frac{32 \pi}{3\tau_k^2} +o(1) \right),
\end{split}
\end{equation}
where we used the fact that
\begin{align*}
& \frac{1}{\rho} \left|\int_{B_{\frac{\sqrt{8}\epsilon}{\tau_k \rho}}(0)} \frac{e^{\ell_{\rho,k} \left(\frac{\tau_k \rho x}{\sqrt{8}} + \xi_{\rho,k} \right) + \varphi_{\rho}\left(\frac{\tau_k \rho x}{\sqrt{8}} + \xi_{\rho,k} \right)}}{\left(1+ \frac{\vert x \vert^2}{8}\right)^2} \frac{x_1}{1+\frac{\vert x \vert^2}{8}} \frac{\partial \varphi_{\rho}}{\partial x_1} \left(\frac{\tau_k \rho x}{\sqrt{8}} + \xi_{\rho,k} \right)dx \right|\\
&\leqslant \frac{C}{\rho} \int_{B_{\frac{\sqrt{8}\epsilon}{\tau_k \rho}}(0)} \frac{\vert x_1 \vert}{\left(1+\frac{\vert x \vert^2}{8} \right)^3} \left| \frac{\partial \varphi_{\rho}}{\partial x_1} \left(\frac{\tau_k \rho x}{\sqrt{8}} + \xi_{\rho,k} \right) \right| dx \\
&\hbox{using Cauchy-Schwarz}\\
&\leqslant \frac{C}{\rho} \left( \int_{B_{\frac{\sqrt{8}\epsilon}{\tau_k \rho}}(0)} \left(\frac{\vert x_1 \vert}{\left(1+\frac{\vert x \vert^2}{8} \right)^3} \right)^{2}dx \right)^{\frac{1}{2}}\left( \int_{B_{\frac{\sqrt{8}\epsilon}{\tau_k \rho}}(0)}  \left| \frac{\partial \varphi_{\rho}}{\partial x_1} \left(\frac{\tau_k \rho x}{\sqrt{8}} + \xi_{\rho,k} \right)  \right|^2 dx \right)^{\frac{1}{2}} \\
&\leqslant \frac{C}{\rho^2} \rho^{\beta} \log(\rho)\\
&=\frac{1}{\rho^2} o(1).
\end{align*}
Therefore
$$\frac{\int_{\Omega} \frac{\partial u_{\rho}}{\partial x_1}^2(x) \vert \nabla \psi_{\rho,k}(x) \vert^2 dx + o(1)}{ \rho^2  \int_{\Omega} \left(e^{u_{\rho}(x)} +e^{-u_{\rho}(x)} \right) \frac{\partial u_{\rho}}{\partial x_1}^2(x) \psi_{\rho,k}^2(x) dx +o(1)} \leqslant \frac{C}{\frac{1}{\rho^2} \sum_{k=1}^m \left(\frac{32\pi}{3\tau_k^2} +o(1) \right)} \leqslant C \rho^2.$$
We know by the proof of Proposition \ref{uno1} that the first two eigenvalues are zero and 1, and we know that $\mu_{\rho,m+1} \neq 0$.
\endproof

Furthermore,
\begin{prop} \label{penultimo}
We have
\begin{itemize}
\item[(i)] $\mu_{\rho,j} \leqslant 1+C\rho^2$ for every $m+1 \leqslant j \leqslant 3m$; 
\item[(ii)] $\mu_{\rho,j} \to 1$ for every $m+1 \leqslant j \leqslant 3m$ ;
\item[(iii)] $\mu_{\rho,j} >1$ for every $j>3m$.
\end{itemize}
\end{prop}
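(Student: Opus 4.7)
The plan is to handle the three parts in sequence. Part (i) is proved by induction on $j$ extending Lemma \ref{t6}; part (ii) follows immediately from (i) via Proposition \ref{uno1}; part (iii) is a dimension-counting contradiction argument using Theorem \ref{tre3}.

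For part (i), I would assume inductively that $\mu_{\rho,r}\leq 1+C\rho^2$ for $r=m+1,\ldots,j-1$, with $j\leq 3m$, the base case $j=m+1$ being Lemma \ref{t6}. I would build the test function
$$\Psi_\rho=\sum_{i=1}^{2}\sum_{k=1}^{m}\lambda_{i,k}\,\frac{\partial u_\rho}{\partial x_i}\psi_{\rho,k}+\sum_{r=1}^{j-1}a_{\rho,r}v_{\rho,r},$$
choosing the $\lambda_{i,k}$ so that the rescaled limit of the first sum is orthogonal, in the weighted $L^2$-product on $(\R^2)^m$ with weight $(1+|x|^2/8)^{-2}$, to each of the rescaled limits $\tilde v_{r}^{(k)}$ for $r=m+1,\ldots,j-1$; this is a system of $j-1-m\leq 2m-1$ linear conditions on the $2m$ unknowns $\lambda_{i,k}$ and so admits a nontrivial solution. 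The $a_{\rho,r}$ are then fixed by the requirement $(\Psi_\rho,v_{\rho,r})_{H_0^1}=0$ for $r=1,\ldots,j-1$. Following Lemma \ref{t6} term by term, the Rayleigh quotient of $\Psi_\rho$ is dominated by the diagonal quotient of the $\partial_i u_\rho\psi_{\rho,k}$ block, which is $1+O(\rho^2)$; the off-diagonal contributions from the correction $a_{\rho,r}v_{\rho,r}$ are $o(1)$ thanks to Proposition \ref{t3} when $r\leq m$ and to the designed weighted-$L^2$ orthogonality in the limit when $r>m$. Part (ii) is then immediate: from (i) we have $\mu_{\rho,j}\leq 1+o(1)$, and since $j>m$ excludes the vanishing sequences of Proposition \ref{t3}, Proposition \ref{uno1} yields $\mu_{\rho,j}\to 1$.

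For part (iii) I would argue by contradiction, assuming $\mu_{\rho,3m+1}\leq 1$. Proposition \ref{uno1} then gives $\mu_{\rho,3m+1}\to 1$ with a nonzero limit profile, and since $\mu_{\rho,3m+1}<1+C\rho^2$, Theorem \ref{tre3} forces $t_{3m+1}^{(k)}=0$ for every $k$. Thus $\tilde v_{3m+1}^{(k)}(x)=(s_{1,3m+1}^{(k)}x_1+s_{2,3m+1}^{(k)}x_2)/(8+|x|^2)$ with the associated vector in $\R^{2m}$ nonzero; by (i) the same representation and nontriviality hold for $j=m+1,\ldots,3m$. Passing the orthogonality $\rho^2\int(e^{u_\rho}+e^{-u_\rho})v_{\rho,i}v_{\rho,j}\,dx=0$ to the limit through the Lebesgue argument of Proposition \ref{t3}, and using the radial identities $\int x_1x_2(1+|x|^2/8)^{-4}\,dx=0$ and $\int x_1^2(1+|x|^2/8)^{-4}\,dx=\int x_2^2(1+|x|^2/8)^{-4}\,dx$, the relation reduces to $\sum_{k=1}^{m}(s_{1,i}^{(k)}s_{1,j}^{(k)}+s_{2,i}^{(k)}s_{2,j}^{(k)})=0$ for all $m+1\leq i<j\leq 3m+1$. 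This would give $2m+1$ nonzero pairwise orthogonal vectors in $\R^{2m}$, a contradiction.

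The main obstacle is part (i). While Lemma \ref{t6} provides the template, the block version requires verifying at each inductive step that the $\lambda_{i,k}$ selected by the orthogonality constraints still give a test function whose leading Rayleigh quotient equals $1+O(\rho^2)$, and that the corrections $a_{\rho,r}v_{\rho,r}$ for $r=m+1,\ldots,j-1$ (where $\mu_{\rho,r}$ is near $1$ rather than near $0$, changing the cross-term estimates from those in Lemma \ref{t6}) do not spoil the bound. The key quantitative input is the designed weighted-$L^2$ orthogonality in the limit, which transfers to sufficient control of $(W,v_{\rho,r})_{H_0^1}$ and hence of $a_{\rho,r}$ for $r>m$.
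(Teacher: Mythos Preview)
Your proposal is correct and follows essentially the same approach as the paper's proof: the inductive test-function construction with $2m$ free parameters $\lambda_{i,k}$ subject to the linear constraints $\sum_k (\alpha_k/\tau_k)\bigl(s_{1,r}^{(k)}\lambda_1^{(k)}+s_{2,r}^{(k)}\lambda_2^{(k)}\bigr)=0$ for part (i), Proposition~\ref{uno1} for part (ii), and the $2m+1$-orthogonal-vectors-in-$\R^{2m}$ contradiction for part (iii). The paper carries out exactly the quantitative step you flag as the obstacle, proving $\rho\,a_{\rho,r}=o(1)$ for $m<r<j$ from the designed orthogonality, and then checking that both numerator and denominator corrections are $o(\rho^{-2})$ against a leading denominator of order $\rho^{-2}$.
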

\proof
Let us start by proving (\textit{i}). Let us proceed by induction on $m$. We know that (\textit{i}) holds true for $j=m+1$. Let us assume that it holds true for $m+1 \leqslant j \leqslant 3m-1$. We show now that it holds true for $j=3m$.

Set 
$$\Psi_{\rho}(x)= \sum_{k=1}^m \left(\lambda_{1}^{(k)} \frac{\partial u_{\rho}}{\partial x_1}(x) + \lambda_{2}^{(k)}\frac{\partial u_{\rho}}{\partial x_2}(x) \right) \psi_{\rho,k}(x) + \sum_{j=1}^{3m-1} a_{\rho,j} v_{\rho, j}(x),$$ 
where
\begin{equation} \label{ak}
\begin{split}
a_{\rho,j} &= - \frac{\left( \sum_{k=1}^m \left(\lambda_{1}^{(k)} \frac{\partial u_{\rho}}{\partial x_1} + \lambda_{2}^{(k)}\frac{\partial u_{\rho}}{\partial x_2} \right) \psi_{\rho,k}, v_{\rho,j} \right)_{H_0^1(\Omega)}}{(v_{\rho,j},v_{\rho,j})_{H_0^1(\Omega)}}\\
&=- \frac{\sum_{k=1}^m \lambda_{1}^{(k)} \left( \frac{\partial u_{\rho}}{\partial x_1}  \psi_{\rho,k}, v_{\rho,j} \right)_{H_0^1(\Omega)}}{(v_{\rho,j},v_{\rho,j})_{H_0^1(\Omega)}} - \frac{\sum_{k=1}^m \lambda_{2}^{(k)} \left( \frac{\partial u_{\rho}}{\partial x_2}  \psi_{\rho,k}, v_{\rho,j} \right)_{H_0^1(\Omega)}}{(v_{\rho,j},v_{\rho,j})_{H_0^1(\Omega)}},
\end{split}
\end{equation}
for $m+1\leqslant j \leqslant3m-1$.
With this choice for the $a_{\rho,j}$'s it is immediate to see that $\Psi_{\rho}$ is orthogonal to $v_{\rho,j}$ for $1\leqslant j \leqslant 3m-1$.
We have seen in Lemma \ref{t6} that $a_{\rho,j}=o(1)$ for every $j \leqslant m$. Let us now prove that, for a suitable choice of the $\lambda_1^{k}$'s and $\lambda_2^{(k)}$'s we have $\rho a_{\rho,j} =o(1)$ for every $m+1 \leqslant j \leqslant 3m-1$. By inductive hypothesis and by Theorem \ref{tre3}, for every $m+1 \leqslant j \leqslant 3m-1$ and every $k=1, \ldots, m$ we have that
$$\tilde{v}^{(k)}_{\rho,j}\to \frac{s_{1,j}^{(k)} x_1 + s_{2,j}^{(k)} x_2}{8+\vert x \vert^2}$$
in $C^2_{\mathrm{loc}}(\R^2)$.
Let us start by studying the numerator of \eqref{ak}.
\begin{equation} \label{appo}
\begin{split}
& \left( \sum_{k=1}^m \left(\lambda_{1}^{(k)} \frac{\partial u_{\rho}}{\partial x_1} + \lambda_{2}^{(k)}\frac{\partial u_{\rho}}{\partial x_2} \right) \psi_{\rho,k}, v_{\rho,j}  \right)_{H_0^1(\Omega)}\\
&=\sum_{k=1}^m  \frac{\mu_{\rho,j}}{8\tau_k^2} \int_{B_{\frac{2\sqrt{8} \epsilon}{\tau_k \rho}}(0)} \frac{e^{\ell_{\rho,k} \left(\frac{\tau_k \rho x}{\sqrt{8}} + \xi_{\rho,k} \right) + \varphi_{\rho}\left(\frac{\tau_k \rho x}{\sqrt{8}} + \xi_{\rho,k} \right)}}{\left(1+\frac{\vert x \vert^2}{8} \right)^2} \tilde{v}^{(k)}_{\rho,j} (x) \psi_{\rho,k}\left(\frac{\tau_k \rho x}{\sqrt{8}} + \xi_{\rho,k} \right)\\
& \Bigg( - \frac{4 \alpha_k}{\sqrt{8}\tau_k \rho} \frac{\lambda_{1}^{(k)} x_1 + \lambda_{2}^{(k)} x_2}{8+\vert x \vert^2} + \left( \lambda_{1}^{(k)} \frac{\partial \ell_{\rho,k}}{\partial x_1} + \lambda_{2}^{(k)} \frac{\partial \ell_{\rho,k}}{\partial x_2} \right) \left(\frac{\tau_k \rho x}{\sqrt{8}} + \xi_{\rho,k} \right) +\\
&+ \left(\lambda_{1}^{(k)} \frac{\partial \varphi_{\rho}}{\partial x_1} + \lambda_{2}^{(k)}\frac{\partial \varphi_{\rho}}{\partial x_2} \right)  \left(\frac{\tau_k \rho x}{\sqrt{8}} + \xi_{\rho,k} \right) \Bigg) dx +\\
&+\sum_{k=1}^m \mu_{\rho,j} \frac{\tau_k^6 \rho^8}{8} \int_{B_{\frac{2\sqrt{8} \epsilon}{\tau_k \rho}}(0)} e^{-\ell_{\rho,k} \left(\frac{\tau_k \rho x}{\sqrt{8}} + \xi_{\rho,k} \right) - \varphi_{\rho}\left(\frac{\tau_k \rho x}{\sqrt{8}} + \xi_{\rho,k} \right)}\left(1+\frac{\vert x \vert^2}{8} \right)^2 \tilde{v}^{(k)}_{\rho,j} (x)\\
& \psi_{\rho,k}\left(\frac{\tau_k \rho x}{\sqrt{8}} + \xi_{\rho,k} \right) \Bigg( - \frac{4 \alpha_k}{\sqrt{8}\tau_k \rho} \frac{\lambda_{1}^{(k)} x_1 + \lambda_{2}^{(k)} x_2}{8+\vert x \vert^2} +\\
&+ \left( \lambda_{1}^{(k)} \frac{\partial \ell_{\rho,k}}{\partial x_1} + \lambda_{2}^{(k)} \frac{\partial \ell_{\rho,k}}{\partial x_2} \right) \left(\frac{\tau_k \rho x}{\sqrt{8}} + \xi_{\rho,k} \right) +  \left(\lambda_{1}^{(k)} \frac{\partial \varphi_{\rho}}{\partial x_1} + \lambda_{2}^{(k)}\frac{\partial \varphi_{\rho}}{\partial x_2} \right)  \left(\frac{\tau_k \rho x}{\sqrt{8}} + \xi_{\rho,k} \right) \Bigg) dx\\
&=\sum_{k=1}^m \frac{\mu_{\rho,j}}{8 \tau_k^2} \int_{B_{\frac{2\sqrt{8} \epsilon}{\tau_k \rho}}(0)} \frac{e^{\ell_{\rho,k} \left(\frac{\tau_k \rho x}{\sqrt{8}} + \xi_{\rho,k} \right) + \varphi_{\rho}\left(\frac{\tau_k \rho x}{\sqrt{8}} + \xi_{\rho,k} \right)}}{\left(1+\frac{\vert x \vert^2}{8} \right)^2} \tilde{v}^{(k)}_{\rho,j} (x)\psi_{\rho,k}\left(\frac{\tau_k \rho x}{\sqrt{8}} + \xi_{\rho,k} \right)\\
& \left(- \frac{4 \alpha_k}{\sqrt{8}\tau_k \rho} \frac{\lambda_{1}^{(k)} x_1 + \lambda_{2}^{(k)} x_2}{8+\vert x \vert^2} + \left( \lambda_{1}^{(k)} \frac{\partial \ell_{\rho,k}}{\partial x_1} + \lambda_{2}^{(k)} \frac{\partial \ell_{\rho,k}}{\partial x_2} \right) \left(\frac{\tau_k \rho x}{\sqrt{8}} + \xi_{\rho,k} \right) \right) dx + \mu_{\rho,j}\frac{o(1)}{\rho},
\end{split}
\end{equation}
where we used the fact that
\begin{equation} \label{cauchy2}
\begin{split}
&\Biggl| \int_{B_{\frac{2\sqrt{8} \epsilon}{\tau_k \rho}}(0)} \frac{e^{\ell_{\rho,k} \left(\frac{\tau_k \rho x}{\sqrt{8}} + \xi_{\rho,k} \right) + \varphi_{\rho}\left(\frac{\tau_k \rho x}{\sqrt{8}} + \xi_{\rho,k} \right)}}{\left(1+\frac{\vert x \vert^2}{8} \right)^2} \tilde{v}^{(k)}_{\rho,j} (x) \psi_{\rho,k}\left(\frac{\tau_k \rho x}{\sqrt{8}} + \xi_{\rho,k} \right) \frac{\partial \varphi_{\rho}}{\partial x_i}\left(\frac{\tau_k \rho x}{\sqrt{8}} + \xi_{\rho,k} \right) dx \Biggr|\\
&\leqslant C \int_{B_{\frac{2\sqrt{8} \epsilon}{\tau_k \rho}}(0)} \frac{1}{\left(1+\frac{\vert x \vert^2}{8} \right)^2} \left| \frac{\partial \varphi_{\rho}}{\partial x_i}\left(\frac{\tau_k \rho x}{\sqrt{8}} + \xi_{\rho,k} \right) \right| dx \\
&\hbox{using Cauchy-Schwarz}\\
&\leqslant C \left( \int_{B_{\frac{2\sqrt{8} \epsilon}{\tau_k \rho}}(0)}\left( \frac{1}{\left(1+\frac{\vert x \vert^2}{8} \right)^2} \right)^2 dx \right)^{\frac{1}{2}} \left( \int_{B_{\frac{2\sqrt{8} \epsilon}{\tau_k \rho}}(0)} \left| \frac{\partial \varphi_{\rho}}{\partial x_i}\left(\frac{\tau_k \rho x}{\sqrt{8}} + \xi_{\rho,k} \right) \right|^2 dx \right)^{\frac{1}{2}} \\
&\leqslant \frac{C}{\rho} \rho^{\beta} \vert \log(\rho) \vert\\
&=\frac{o(1)}{\rho},
\end{split}
\end{equation}
and that
\begin{equation} \label{cauchy3}
\begin{split}
&\rho^8\Bigg| \int_{B_{\frac{2\sqrt{8} \epsilon}{\tau_k \rho}}(0)} e^{-\ell_{\rho,k} \left(\frac{\tau_k \rho x}{\sqrt{8}} + \xi_{\rho,k} \right) - \varphi_{\rho}\left(\frac{\tau_k \rho x}{\sqrt{8}} + \xi_{\rho,k} \right)}\left(1+\frac{\vert x \vert^2}{8} \right)^2\\
&\tilde{v}^{(k)}_{\rho,j} (x) \psi_{\rho,k}\left(\frac{\tau_k \rho x}{\sqrt{8}} + \xi_{\rho,k} \right) \frac{\partial \varphi_{\rho}}{\partial x_i}\left(\frac{\tau_k \rho x}{\sqrt{8}} + \xi_{\rho,k} \right) dx \Bigg|\\
&\leqslant C \rho^8 \int_{B_{\frac{2\sqrt{8} \epsilon}{\tau_k \rho}}(0)} \left(1+\frac{\vert x \vert^2}{8} \right)^2 \left| \frac{\partial \varphi_{\rho}}{\partial x_i}\left(\frac{\tau_k \rho x}{\sqrt{8}} + \xi_{\rho,k} \right) \right| dx \\
&\hbox{using Cauchy-Schwarz}\\
&\leqslant C \rho^8 \left( \int_{B_{\frac{2\sqrt{8} \epsilon}{\tau_k \rho}}(0)} \left(1+\frac{\vert x \vert^2}{8} \right)^4 dx \right)^{\frac{1}{2}} \left( \int_{B_{\frac{2\sqrt{8} \epsilon}{\tau_k \rho}}(0)} \left| \frac{\partial \varphi_{\rho}}{\partial x_i}\left(\frac{\tau_k \rho x}{\sqrt{8}} + \xi_{\rho,k} \right) \right|^2 dx \right)^{\frac{1}{2}} \\
&\leqslant C \rho^2 \rho^{\beta} \vert \log(\rho) \vert\\
&=o(1).
\end{split}
\end{equation}
Moreover,
\begin{align*}
&\sum_{k=1}^m \frac{\mu_{\rho,j}}{8 \tau_k^2} \int_{B_{\frac{2\sqrt{8} \epsilon}{\tau_k \rho}}(0)} \frac{e^{\ell_{\rho,k} \left(\frac{\tau_k \rho x}{\sqrt{8}} + \xi_{\rho,k} \right) + \varphi_{\rho}\left(\frac{\tau_k \rho x}{\sqrt{8}} + \xi_{\rho,k} \right)}}{\left(1+\frac{\vert x \vert^2}{8} \right)^2} \tilde{v}^{(k)}_{\rho,j} (x)\psi_{\rho,k}\left(\frac{\tau_k \rho x}{\sqrt{8}} + \xi_{\rho,k} \right)\\
& \left( - \frac{4 \alpha_k}{\sqrt{8}\tau_k \rho} \frac{\lambda_{1}^{(k)} x_1 + \lambda_{2}^{(k)} x_2}{8+\vert x \vert^2} + \left( \lambda_{1}^{(k)} \frac{\partial \ell_{\rho,k}}{\partial x_1} + \lambda_{2}^{(k)} \frac{\partial \ell_{\rho,k}}{\partial x_2} \right) \left(\frac{\tau_k \rho x}{\sqrt{8}} + \xi_{\rho,k} \right) \right) dx + \mu_{\rho,j}\frac{o(1)}{\rho}\\
&\hbox{using Lebesgue theorem}\\
&=\sum_{k=1}^m - \frac{4 \alpha_k}{\sqrt{8}\tau_k \rho} \mu_{\rho,j}  \int_{\R^2} \frac{1}{\left(1+\frac{\vert x \vert^2}{8} \right)^2}  \frac{s_{1,j}^{(k)} x_1 + s_{2,j}^{(k)} x_2}{8+\vert x \vert^2}  \frac{\lambda_{1}^{(k)} x_1 + \lambda_{2}^{(k)} x_2}{8+\vert x \vert^2} dx (1+o(1)) +\\
&+\sum_{k=1}^m \mu_{\rho,j} \left( \lambda_{1}^{(k)} \frac{\partial \ell_{\rho,k}}{\partial x_1}(\xi_{k}) + \lambda_{2}^{(k)} \frac{\partial \ell_{\rho,k}}{\partial x_2}(\xi_{k}) \right)  \int_{\R^2} \frac{1}{\left(1+\frac{\vert x \vert^2}{8} \right)^2} \frac{s_{1,j}^{(k)} x_1 + s_{2,j}^{(k)} x_2}{8+\vert x \vert^2}  dx+\mu_{\rho,j}\frac{o(1)}{\rho}\\
&=-\frac{\pi}{3\sqrt{8}} \mu_{\rho,j} \frac{1}{\rho}\sum_{k=1}^m  \frac{\alpha_k }{\tau_k }\left(s_{1,j}^{(k)} \lambda_{1}^{(k)} + s_{2,j}^{(k)} \lambda_{2}^{(k)} \right)(1+ o(1))+ \mu_{\rho,j}\frac{o(1)}{\rho}\\
&\hbox{taking $\lambda^{(k)}_1$ and $\lambda^{(k)}_2$ such that $\sum_{k=1}^m  \frac{\alpha_k }{\tau_k }\left(s_{1,j}^{(k)} \lambda_{1}^{(k)} + s_{2,j}^{(k)} \lambda_{2}^{(k)} \right)=0$}\\
&=\mu_{\rho,j} \frac{o(1)}{\rho}.
\end{align*}
Then, if $m+1 \leqslant j \leqslant 3m-1$ we have
\begin{equation} \label{nume}
\left( \sum_{k=1}^m \left(\lambda_{1}^{(k)} \frac{\partial u_{\rho}}{\partial x_1} + \lambda_{2}^{(k)}\frac{\partial u_{\rho}}{\partial x_2} \right) \psi_{\rho,k}, v_{\rho,j} \right)_{H_0^1(\Omega)} = \mu_{\rho,j} \frac{o(1)}{\rho}.
\end{equation}
For the denominator of \eqref{ak}, for $m+1 \leqslant j \leqslant 3m-1$ we have
\begin{equation} \label{deno}
\begin{split}
&(v_{\rho,j},v_{\rho,j})_{H_0^1(\Omega)}\\
&=\mu_{\rho,j} \sum_{k=1}^m  \frac{1}{8 \tau_k^2}  \int_{B_{\frac{\sqrt{8}\epsilon}{\tau_k \rho}}(0)} \frac{e^{\ell_{\rho,k} \left(\frac{\tau_k \rho x}{\sqrt{8}} + \xi_{\rho,k} \right) + \varphi_{\rho}\left(\frac{\tau_k \rho x}{\sqrt{8}} + \xi_{\rho,k} \right)}}{\left(1+\frac{\vert x \vert^2}{8} \right)^2} \left(\tilde{v}^{(k)}_{\rho,j} (x)\right)^2 dx +\\
&+\mu_{\rho,j} \sum_{k=1}^m  \frac{\tau_k^6 \rho^8}{8}\int_{B_{\frac{\sqrt{8}\epsilon}{\tau_k \rho}}(0)} e^{-\ell_{\rho,k} \left(\frac{\tau_k \rho x}{\sqrt{8}} + \xi_{\rho,k} \right) - \varphi_{\rho}\left(\frac{\tau_k \rho x}{\sqrt{8}} + \xi_{\rho,k} \right)}\left(1+\frac{\vert x \vert^2}{8} \right)^2 \left(\tilde{v}^{(k)}_{\rho,j} (x)\right)^2 dx +o(1)\\
&\hbox{using Lebesgue theorem}\\
&=\mu_{\rho,j} \sum_{k=1}^m \int_{\R^2} \frac{1}{\left(1+\frac{\vert x \vert^2}{8} \right)^2} \left( \frac{s_{1,j}^{(k)} x_1 + s_{2,j}^{(k)} x_2}{8+\vert x \vert^2} \right)^2 dx +o(1)\\
&= \frac{\pi}{12} \mu_{\rho,j} \sum_{k=1}^m \left(\left(s_{1,j}^{(k)} \right)^2 + \left(s_{2,j}^{(k)} \right)^2\right)+o(1).
\end{split}
\end{equation}
Using \eqref{nume} and \eqref{deno} in \eqref{ak} we get that for $m+1 \leqslant j \leqslant 3m-1$, we have $\rho a_{\rho,j}=o(1)$.
By the formula for the Rayleigh quotient we get that
\begin{equation} \label{R2}
\begin{split}
\mu_{\rho,3m} &= \inf_{\substack{v \in H_0^1(\Omega), v \neq 0,\\ v \perp v_{\rho,1}, \ldots, v \perp v_{\rho,3m-1}}} \frac{\int_{\Omega} \vert \nabla v \vert^2 dx}{\rho^2 \int_{\Omega} \left(e^{u_{\rho}(x)} +e^{-u_{\rho}(x)} \right) v^2(x) dx}\\
&\leqslant \frac{\int_{\Omega} \vert \nabla \Psi_{\rho}(x) \vert^2 dx}{\rho^2 \int_{\Omega} \left(e^{u_{\rho}(x)} +e^{-u_{\rho}(x)} \right) \Psi^2_{\rho}(x)dx }\\
&=\frac{N}{D}.
\end{split}
\end{equation}
Where
\begin{align*}
N=\int_{\Omega} \left|\nabla \left( \sum_{k}^m \left(\lambda_{1}^{(k)} \frac{\partial u_{\rho}}{\partial x_1}(x) + \lambda_{2}^{(k)}\frac{\partial u_{\rho}}{\partial x_2}(x) \right) \psi_{\rho,k}(x) + \sum_{j=1}^{3m-1} a_{\rho,j} v_{\rho, j}(x) \right) \right|^2 dx
\end{align*}
and
\begin{align*}
D&=\rho^2 \int_{\Omega} \left(e^{u_{\rho}(x)} + e^{-u_{\rho}(x)} \right) \left( \sum_{k}^m \psi_{\rho,k}(x) \sum_{i=1,2}  \lambda_{i}^{(k)} \frac{\partial u_{\rho}}{\partial x_i}(x) + \sum_{j=1}^{3m-1} a_{\rho,j} v_{\rho, j}(x) \right)^2dx.
\end{align*}
Let us start by studying the numerator. Since
\begin{align*}
&\sum_{k=1}^m \int_{\Omega} \left| \nabla \left( \left(\lambda_{1}^{(k)} \frac{\partial u_{\rho}}{\partial x_1}(x) + \lambda_{2}^{(k)}\frac{\partial u_{\rho}}{\partial x_2}(x) \right) \psi_{\rho,k}(x)\right) \right|^2 dx \\
&=-2\sum_{k=1}^m \int_{\Omega} \left( \lambda_{1}^{(k)} \frac{\partial u_{\rho}}{\partial x_1}(x) + \lambda_{2}^{(k)}\frac{\partial u_{\rho}}{\partial x_2}(x) \right)  \psi_{\rho,k}(x)\nabla  \left(\lambda_{1}^{(k)} \frac{\partial u_{\rho}}{\partial x_1}(x) + \lambda_{2}^{(k)}\frac{\partial u_{\rho}}{\partial x_2}(x) \right) \nabla  \psi_{\rho,k}(x) dx +\\
&-\sum_{k=1}^m \int_{\Omega} \Delta \left(\lambda_{1}^{(k)} \frac{\partial u_{\rho}}{\partial x_1}(x) + \lambda_{2}^{(k)}\frac{\partial u_{\rho}}{\partial x_2}(x) \right) \left(\lambda_{1}^{(k)} \frac{\partial u_{\rho}}{\partial x_1}(x) + \lambda_{2}^{(k)}\frac{\partial u_{\rho}}{\partial x_2}(x) \right)\psi_{\rho,k}^2(x)  dx +\\ 
&+\sum_{k=1}^m \int_{\Omega} \left(\lambda_{1}^{(k)} \frac{\partial u_{\rho}}{\partial x_1}(x) + \lambda_{2}^{(k)}\frac{\partial u_{\rho}}{\partial x_2}(x) \right)^2 \left| \nabla \psi_{\rho,k}(x) \right|^2 dx +\\
&+2\sum_{k=1}^m \int_{\Omega} \left( \lambda_{1}^{(k)} \frac{\partial u_{\rho}}{\partial x_1}(x) + \lambda_{2}^{(k)}\frac{\partial u_{\rho}}{\partial x_2}(x) \right)  \psi_{\rho,k}(x)\nabla  \left(\lambda_{1}^{(k)} \frac{\partial u_{\rho}}{\partial x_1}(x) + \lambda_{2}^{(k)}\frac{\partial u_{\rho}}{\partial x_2}(x) \right) \nabla  \psi_{\rho,k}(x) dx \\
&=\sum_{k=1}^m \rho^2 \int_{B_{2\epsilon}(\xi_{\rho,k})} \left(e^{u_{\rho}(x)}+e^{-u_{\rho(x)}} \right)\left(\lambda_{1}^{(k)} \frac{\partial u_{\rho}}{\partial x_1}(x) + \lambda_{2}^{(k)}\frac{\partial u_{\rho}}{\partial x_2}(x) \right)^2 \psi_{\rho,k}^2(x)  dx +\\
&+\sum_{k=1}^m \int_{B_{2\epsilon}(\xi_{\rho,k}) \setminus B_{\epsilon}(\xi_{\rho,k})} \left(\lambda_{1}^{(k)} \frac{\partial u_{\rho}}{\partial x_1}(x) + \lambda_{2}^{(k)}\frac{\partial u_{\rho}}{\partial x_2}(x) \right)^2 \left| \nabla \psi_{\rho,k}(x) \right|^2 dx,
\end{align*}
we have that
\begin{equation} \label{numeratore}
\begin{split}
&\int_{\Omega} \left|\nabla \left( \sum_{k=1}^m \left(\lambda_{1}^{(k)} \frac{\partial u_{\rho}}{\partial x_1}(x) + \lambda_{2}^{(k)}\frac{\partial u_{\rho}}{\partial x_2}(x) \right) \psi_{\rho,k}(x) + \sum_{j=1}^{3m-1} a_{\rho,j} v_{\rho, j}(x) \right) \right|^2 dx\\
&=\sum_{k=1}^m \rho^2 \int_{B_{2\epsilon}(\xi_{\rho,k})} \left(e^{u_{\rho}(x)}+e^{-u_{\rho(x)}} \right)\left(\lambda_{1}^{(k)} \frac{\partial u_{\rho}}{\partial x_1}(x) + \lambda_{2}^{(k)}\frac{\partial u_{\rho}}{\partial x_2}(x) \right)^2 \psi_{\rho,k}^2(x)  dx +\\
&+\sum_{k=1}^m \int_{B_{2 \epsilon}(\xi_{\rho,k}) \setminus B_{\epsilon}(\xi_{\rho,k})} \left(\lambda_{1}^{(k)} \frac{\partial u_{\rho}}{\partial x_1}(x) + \lambda_{2}^{(k)}\frac{\partial u_{\rho}}{\partial x_2}(x) \right)^2 \left| \nabla \psi_{\rho,k}(x) \right|^2 dx+\\
&+ \sum_{j=m}^{3m-1} a_{\rho,j}^2 \int_{\Omega} \left|\nabla v_{\rho, j}(x) \right|^2 dx+\\
&+2 \sum_{j=m}^{3m-1} a_{\rho,j} \int_{\Omega} \nabla \left( \sum_{k=1}^m \left(\lambda_{1}^{(k)} \frac{\partial u_{\rho}}{\partial x_1}(x) + \lambda_{2}^{(k)}\frac{\partial u_{\rho}}{\partial x_2}(x) \right) \psi_{\rho,k}(x)\right) \nabla v_{\rho, j}(x) dx+o(1),\\
\end{split}
\end{equation}
since for $j \leqslant m$, by \eqref{risu1} we have
$$a_{\rho,j}^2 \int_{\Omega} \left|\nabla v_{\rho, j}(x) \right|^2 dx = o(1),$$
and for $j \leqslant m$, by \eqref{risu2} we have
\begin{align*}
&a_{\rho,j} \int_{\Omega} \nabla \left( \sum_{k}^m \left(\lambda_{1}^{(k)} \frac{\partial u_{\rho}}{\partial x_1}(x) + \lambda_{2}^{(k)}\frac{\partial u_{\rho}}{\partial x_2}(x) \right) \psi_{\rho,k}(x)\right) \nabla v_{\rho, j}(x) dx\\
&=\sum_{k=1}^m  \lambda_{1}^{(k)} a_{\rho,j} \left( \frac{\partial u_{\rho}}{\partial x_1} \psi_{\rho,k}, v_{\rho, j} \right)_{H_0^1(\Omega)}+\sum_{k=1}^m  \lambda_{2}^{(k)} a_{\rho,j} \left( \frac{\partial u_{\rho}}{\partial x_2} \psi_{\rho,k}, v_{\rho, j} \right)_{H_0^1(\Omega)}\\
&=o(1).
\end{align*}
For the denominator we have that
\begin{equation} \label{denomi}
\begin{split}
&\rho^2 \int_{\Omega} \left(e^{u_{\rho}(x)} + e^{-u_{\rho}(x)} \right) \left( \sum_{k}^m \left(\lambda_{1}^{(k)} \frac{\partial u_{\rho}}{\partial x_1}(x) + \lambda_{2}^{(k)}\frac{\partial u_{\rho}}{\partial x_2}(x) \right) \psi_{\rho,k}(x) + \sum_{j=1}^{3m-1} a_{\rho,j} v_{\rho, j}(x)\right)^2dx\\
&=\sum_{k=1}^m \rho^2 \int_{B_{2\epsilon}(\xi_{\rho,k})} \left(e^{u_{\rho}(x)} + e^{-u_{\rho}(x)} \right) \left(\lambda_{1}^{(k)} \frac{\partial u_{\rho}}{\partial x_1}(x) + \lambda_{2}^{(k)}\frac{\partial u_{\rho}}{\partial x_2}(x) \right)^2 \psi_{\rho,k}^2(x) dx + \\
&+\sum_{j=m}^{3m-1}\frac{a_{\rho,j}^2}{\mu_{\rho,j}}  \rho^2 \int_{\Omega} \vert \nabla v_{\rho, j} \vert^2(x) dx +\\
&+2 \sum_{j=m}^{3m-1} \frac{a_{\rho,j}}{\mu_{\rho,j}} \left( \sum_{k}^m \left(\lambda_{1}^{(k)} \frac{\partial u_{\rho}}{\partial x_1} + \lambda_{2}^{(k)}\frac{\partial u_{\rho}}{\partial x_2} \right)\psi_{\rho,k}, v_{\rho, j} \right)_{H_0^1(\Omega)} +o(1),
\end{split}
\end{equation}
since $j \leqslant m$, by \eqref{risu1} we have
$$\frac{a_{\rho,j}^2}{\mu_{\rho,j}} \int_{\Omega} \left|\nabla v_{\rho, j}(x) \right|^2 dx = o(1),$$
and by \eqref{risu2} we have
\begin{align*}
&\frac{a_{\rho,j}}{\mu_{\rho,j}} \left( \sum_{k}^m \left(\lambda_{1}^{(k)} \frac{\partial u_{\rho}}{\partial x_1} + \lambda_{2}^{(k)}\frac{\partial u_{\rho}}{\partial x_2} \right)\psi_{\rho,k}, v_{\rho, j} \right)_{H_0^1(\Omega)}\\
&=\sum_{k=1}^m  \lambda_{1}^{(k)} \frac{a_{\rho,j}}{\mu_{\rho,j}} \left( \frac{\partial u_{\rho}}{\partial x_1} \psi_{\rho,k}, v_{\rho, j} \right)_{H_0^1(\Omega)}+\sum_{k=1}^m  \lambda_{2}^{(k)} \frac{a_{\rho,j}}{\mu_{\rho,j}} \left( \frac{\partial u_{\rho}}{\partial x_2} \psi_{\rho,k}, v_{\rho, j} \right)_{H_0^1(\Omega)}\\
&=o(1).
\end{align*}
Substituting \eqref{numeratore} and \eqref{denomi} in \eqref{R2} we get
$$\mu_{2,\rho} \leqslant 1 +\frac{N_{\rho}(x)}{D_{\rho}(x)},$$
where 
\begin{align*}
N_{\rho}(x)&=\sum_{k=1}^m \int_{B_{2 \epsilon}(\xi_{\rho,k}) \setminus B_{\epsilon}(\xi_{\rho,k})} \left(\lambda_{1}^{(k)} \frac{\partial u_{\rho}}{\partial x_1}(x) + \lambda_{2}^{(k)}\frac{\partial u_{\rho}}{\partial x_2}(x) \right)^2 \left| \nabla \psi_{\rho,k}(x) \right|^2 dx+\\
&+ \sum_{j=m}^{3m-1} a_{\rho,j}^2 \frac{(\mu_{\rho,j}-1)}{\mu_{\rho,j}} \int_{\Omega} \left|\nabla v_{\rho, j}(x) \right|^2 dx+2 \sum_{j=m}^{3m-1} a_{\rho,j} \frac{(\mu_{\rho,j}-1)}{\mu_{\rho,j}}\\
&\int_{\Omega} \nabla \left( \sum_{k=1}^m \left(\lambda_{1}^{(k)} \frac{\partial u_{\rho}}{\partial x_1}(x) + \lambda_{2}^{(k)}\frac{\partial u_{\rho}}{\partial x_2}(x) \right) \psi_{\rho,k}(x)\right) \nabla v_{\rho, j}(x) dx+o(1),
\end{align*}
and
\begin{align*}
D_{\rho}(x)&=\sum_{k=1}^m \rho^2 \int_{B_{2\epsilon}(\xi_{\rho,k})} \left(e^{u_{\rho}(x)} + e^{-u_{\rho}(x)} \right) \left(\lambda_{1}^{(k)} \frac{\partial u_{\rho}}{\partial x_1}(x) + \lambda_{2}^{(k)}\frac{\partial u_{\rho}}{\partial x_2}(x) \right)^2 \psi_{\rho,k}^2(x) dx + \\
&+\sum_{j=m}^{3m-1}\frac{a_{\rho,j}^2}{\mu_{\rho,j}}  \rho^2 \int_{\Omega} \vert \nabla v_{\rho, j} \vert^2(x) dx +\\
&+2 \sum_{j=m}^{3m-1} \frac{a_{\rho,j}}{\mu_{\rho,j}} \left( \sum_{k}^m \left(\lambda_{1}^{(k)} \frac{\partial u_{\rho}}{\partial x_1} + \lambda_{2}^{(k)}\frac{\partial u_{\rho}}{\partial x_2} \right)\psi_{\rho,k}, v_{\rho, j} \right)_{H_0^1(\Omega)} +o(1).
\end{align*}
It remains to prove that $$\frac{N_{\rho}(x)}{D_{\rho}(x)} \leqslant C \rho^2.$$
Let us start by studying $N_{\rho}(x)$. We have
$$\int_{B_{2 \epsilon}(\xi_{\rho,k}) \setminus B_{\epsilon}(\xi_{\rho,k})} \left(\lambda_{1}^{(k)} \frac{\partial u_{\rho}}{\partial x_1}(x) + \lambda_{2}^{(k)}\frac{\partial u_{\rho}}{\partial x_2}(x) \right)^2 \left| \nabla \psi_{\rho,k}(x) \right|^2 dx < C,$$
by \eqref{deno}, and since $a_{\rho,j}=\frac{o(1)}{\rho}$ and $\mu_{\rho,j}-1 \leqslant C \rho^2$, we have
\begin{align*}
&a_{\rho,j}^2 \frac{(\mu_{\rho,j}-1)}{\mu_{\rho,j}} \int_{\Omega} \left|\nabla v_{\rho, j}(x) \right|^2 dx = \frac{\pi}{12} a_{\rho,j}^2 (\mu_{\rho,j}-1)\left( \sum_{k=1}^m \left(s_{1,j}^{(k)} \right)^2 + \left(s_{2,j}^{(k)} \right)^2 +o(1)\right)= o(1),
\end{align*}
and by \eqref{nume}, and since $a_{\rho,j}=\frac{o(1)}{\rho}$ and $\mu_{\rho,j}-1 \leqslant C \rho^2$, we have
\begin{align*}
&a_{\rho,j} \frac{(\mu_{\rho,j}-1)}{\mu_{\rho,j}} \int_{\Omega} \nabla \left( \sum_{k=1}^m \left(\lambda_{1}^{(k)} \frac{\partial u_{\rho}}{\partial x_1}(x) + \lambda_{2}^{(k)}\frac{\partial u_{\rho}}{\partial x_2}(x) \right) \psi_{\rho,k}(x)\right) \nabla v_{\rho, j}(x) dx\\
&=a_{\rho,j} \frac{(\mu_{\rho,j}-1)}{\mu_{\rho,j}} \left( \sum_{k=1}^m \left(\lambda_{1}^{(k)} \frac{\partial u_{\rho}}{\partial x_1} + \lambda_{2}^{(k)}\frac{\partial u_{\rho}}{\partial x_2} \right) \psi_{\rho,k}, v_{\rho,j} \right)_{H_0^1(\Omega)}\\
&=a_{\rho,j} (\mu_{\rho,j}-1) \frac{o(1)}{\rho}\\
&=o(1).
\end{align*}
So 
\begin{equation} \label{nonso1}
N_{\rho}(x)< C+ o(1).
\end{equation}
For the denominator we have
\begin{equation} \label{nonso2}
\begin{split}
D_{\rho}(x)& \geqslant \sum_{k=1}^m \rho^2 \int_{B_{2\epsilon}(\xi_{\rho,k})} \left(e^{u_{\rho}(x)} + e^{-u_{\rho}(x)} \right) \left(\lambda_{1}^{(k)} \frac{\partial u_{\rho}}{\partial x_1}(x) + \lambda_{2}^{(k)}\frac{\partial u_{\rho}}{\partial x_2}(x) \right)^2 \psi_{\rho,k}^2(x) dx + \\
&+2 \sum_{j=m}^{3m-1} \frac{a_{\rho,j}}{\mu_{\rho,j}} \left( \sum_{k}^m \left(\lambda_{1}^{(k)} \frac{\partial u_{\rho}}{\partial x_1} + \lambda_{2}^{(k)}\frac{\partial u_{\rho}}{\partial x_2} \right)\psi_{\rho,k}, v_{\rho, j} \right)_{H_0^1(\Omega)} +o(1)\\
&> \frac{C}{\rho^2}(1+o(1)).
\end{split}
\end{equation}
In fact, by computations analogous to those of \eqref{appox}, we have
\begin{align*}
&\rho^2 \int_{B_{2\epsilon}(\xi_{\rho,k})} \left(e^{u_{\rho}(x)} + e^{-u_{\rho}(x)} \right) \left(\lambda_{1}^{(k)} \frac{\partial u_{\rho}}{\partial x_1}(x) + \lambda_{2}^{(k)}\frac{\partial u_{\rho}}{\partial x_2}(x) \right)^2 \psi_{\rho,k}^2(x) dx > \frac{C }{\rho^2} (1 + o(1)),
\end{align*}
and by \eqref{nume}, and since $a_{\rho,j}=\frac{o(1)}{\rho}$, we have
$$\frac{a_{\rho,j}}{\mu_{\rho,j}} \left( \sum_{k}^m \left(\lambda_{1}^{(k)} \frac{\partial u_{\rho}}{\partial x_1} + \lambda_{2}^{(k)}\frac{\partial u_{\rho}}{\partial x_2} \right)\psi_{\rho,k}, v_{\rho, j} \right)_{H_0^1(\Omega)}=a_{\rho} \frac{o(1)}{\rho}=\frac{o(1)}{\rho^2}.$$
Using \eqref{nonso1} and \eqref{nonso2} we get that
$$\frac{N_{\rho}(x)}{D_{\rho}(x)} \leqslant C \rho^2,$$
so that (\textit{i}) is proved.

By (\textit{i}) the hypotheses of Proposition \ref{uno1} are satisfied. We know by the proof of Proposition \ref{uno1} that the first two eigenvalues are zero and 1, and we know that $\mu_{\rho,j} \neq 0$ for all $j>m$ and so (\textit{ii}) follows.

Let us now prove (\textit{iii}). By Theorem \ref{tre3} it is enough to prove that there exists $1 \leqslant k \leqslant m$ such that $t^{(k)}_j \neq 0$ for $j>3m$. Let us assume by contradiction that $t^{(k)}_{3m+1}=0$ for every $1 \leqslant k \leqslant m$. 
By the orthogonality of the eigenfunctions corresponding to distinct eigenvalues we have that
\begin{align*}
0=(v_{\rho,j}, v_{\rho,n})_{H_0^1(\Omega)}=\mu_{\rho,j} \rho^2 \int_{\Omega} \left(e^{u_{\rho}(x)} + e^{-u_{\rho}(x)} \right) v_{\rho,j}(x) v_{\rho,n}(x) dx\\
\end{align*}
for every $m+1 \leqslant j<n \leqslant 3m+1$. Then
\begin{align*}
0&=\rho^2 \int_{\Omega} \left(e^{u_{\rho}(x)} + e^{-u_{\rho}(x)} \right) v_{\rho,j}(x) v_{\rho,n}(x) dx\\
&=\sum_{k=1}^m \rho^2 \frac{\tau_k^2 \rho^2}{8} \frac{1}{\tau_k^4 \rho^4} \int_{B_{\frac{\sqrt{8}\epsilon}{\tau_k \rho}}(0)} \frac{e^{-\ell_{\rho,k} \left(\frac{\tau_k \rho x}{\sqrt{8}} + \xi_{\rho,k} \right) - \varphi_{\rho}\left(\frac{\tau_k \rho x}{\sqrt{8}} + \xi_{\rho,k} \right)}}{\left(1+\frac{\vert x \vert^2}{8} \right)^2} \tilde{v}^{(k)}_{\rho,j}(x) \tilde{v}^{(k)}_{\rho,n}(x) dx+\\
&+\sum_{k=1}^m \rho^2 \frac{\tau_k^2 \rho^2}{8} \tau_k^4 \rho^4\\
& \int_{B_{\frac{\sqrt{8}\epsilon}{\tau_k \rho}}(0)} e^{-\ell_{\rho,k} \left(\frac{\tau_k \rho x}{\sqrt{8}} + \xi_{\rho,k} \right) - \varphi_{\rho}\left(\frac{\tau_k \rho x}{\sqrt{8}} + \xi_{\rho,k} \right)}\left(1+\frac{\vert x \vert^2}{8} \right)^2 \tilde{v}^{(k)}_{\rho,j}(x) \tilde{v}^{(k)}_{\rho,n}(x) dx+o(1)\\
&\hbox{using Lebesgue theorem}\\
&=\sum_{k=1}^m \int_{\R^2} \frac{1}{\left(1+\frac{\vert x \vert^2}{8} \right)^2} \left( \frac{s_{1,j}^{(k)} x_1 + s_{2,j}^{(k)} x_2}{8+\vert x \vert^2} \right) \left( \frac{s_{1,n}^{(k)} x_1 + s_{2,n}^{(k)} x_2}{8+\vert x \vert^2} \right) dx + o(1)\\
&= \frac{\pi}{12} \sum_{k=1}^m s_{1,j}^{(k)}s_{1,n}^{(k)} + s_{2,j}^{(k)}s_{2,n}^{(k)}+o(1),
\end{align*}
Then, having set $s_j=(s_{1,j}^{1}, \ldots, s_{1,j}^{m}, s_{2,j}^{1}, \ldots, s_{2,j}^{m}) \in \R^{2m}$, we get
$$\sum_{k=1}^m s_{1,j}^{(k)}s_{1,3m+1}^{(k)} + s_{2,j}^{(k)}s_{2,3m+1}^{(k)} =s_j \cdot s_{3m+1}=0; \quad \forall m \leqslant j \leqslant 3m,$$
but in $\R^{2m}$ there are at most $2m$ orthogonal vectors, so that the condition cannot be satisfied, from which we get a contradiction.
\endproof

\section{Morse Index Computations}
In this chapter, we will give an asymptotic estimate for the eigenvalues and the eigenfunctions from $m+1$ to $3m$. While computing this estimate, we will also calculate the Morse index.

Let us firstly prove the following lemma.

\begin{lem} \label{quasi}
If $j$ is such that $\mu_{\rho,j} \to 1$ and $t_{j}=\textbf{0}$, then for every $k=1, \ldots,m$, we have
$$\Lambda^{j,k}_{\rho}:=\rho^2 \int_{B_{\epsilon}(\xi_{\rho,k})} \left(e^{u_{\rho}(x)} + e^{-u_{\rho}(x)} \right) v_{\rho,j}(x) dx = o(\rho).$$
\end{lem}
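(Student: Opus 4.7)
The strategy is to refine the estimate from Lemma \ref{due2} (which under $t_j=0$ yields only $\Lambda^{j,k}_\rho=o(1/|\log\rho|)$) down to $o(\rho)$, by combining the integral identity of Lemma \ref{lemma} with a second-order Taylor expansion of the Green's function in the representation of $v_{\rho,j}$.

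First, starting from the Green's representation $v_{\rho,j}(x)=\mu_{\rho,j}\rho^2\int_\Omega(e^{u_\rho(y)}+e^{-u_\rho(y)})v_{\rho,j}(y)G(x,y)\,dy$ and Taylor-expanding $G(x,y)$ in $y$ around each $\xi_{\rho,l}$ to second order, I obtain, uniformly on $\partial B_\epsilon(\xi_{\rho,k})$ in $C^1$,
\[
v_{\rho,j}(x)=\mu_{\rho,j}\sum_{l=1}^m\Bigl[G(x,\xi_l)\Lambda^{j,l}_\rho+\nabla_yG(x,\xi_l)\cdot D^{j,l}_\rho\Bigr]+O(\rho^2),
\]
where $D^{j,l}_\rho:=\rho^2\int_{B_\epsilon(\xi_{\rho,l})}(e^{u_\rho}+e^{-u_\rho})v_{\rho,j}(y)(y-\xi_{\rho,l})\,dy$. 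Since the hypothesis $t_j=0$ forces the rescaled limit $\tilde v_j^{(l)}(x)=(s_{1,j}^{(l)}x_1+s_{2,j}^{(l)}x_2)/(8+|x|^2)$ to be odd in $x$, a direct rescaling computation evaluates the first moment and gives $D^{j,l}_\rho=c\,\rho\,\tau_l(s_{1,j}^{(l)},s_{2,j}^{(l)})/\sqrt{8}+o(\rho)$ for some explicit nonzero constant $c$.

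Next, I apply Lemma \ref{lemma} on $\Omega'=B_\epsilon(\xi_{\rho,k})$ with direction $i\in\{1,2\}$:
\[
(1-\mu_{\rho,j})\rho^2\int_{B_\epsilon(\xi_{\rho,k})}(e^{u_\rho}+e^{-u_\rho})\partial_iu_\rho\,v_{\rho,j}\,dx=\int_{\partial B_\epsilon(\xi_{\rho,k})}\bigl[\partial_\nu v_{\rho,j}\,\partial_iu_\rho-v_{\rho,j}\,\partial_\nu\partial_iu_\rho\bigr]d\sigma.
\]
Rescaling the LHS as in Theorem \ref{tre3} yields a leading contribution $-(1-\mu_{\rho,j})A_i\alpha_ks_{i,j}^{(k)}/\rho\,(1+o(1))$ with some explicit $A_i>0$. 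On the RHS, I insert the expansion above for $v_{\rho,j}$ together with the $C^2$ expansion $u_\rho(x)=8\pi\sum_l\alpha_lG(x,\xi_l)+o(1)$ on $\partial B_\epsilon(\xi_{\rho,k})$; the Green-type flux identities of Lemma \ref{lll}, combined with the vanishing of the fluxes of $\nabla_yG(\cdot,\xi_l)$ across $\partial B_\epsilon(\xi_{\rho,k})$, eliminate most cross-terms by harmonicity and leave a precise linear combination of $\Lambda^{j,l}_\rho$ and $D^{j,l}_\rho$ plus an $O(\rho^2)$ remainder.

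Finally, combining the two sides produces a system (one equation per $i\in\{1,2\}$) relating $1-\mu_{\rho,j}$, the $\Lambda^{j,l}_\rho$'s, and the $D^{j,l}_\rho$'s. Using the already-known $\Lambda^{j,k}_\rho=o(1/|\log\rho|)$ (whose leading coefficient in \eqref{stima} vanishes when $t_j=0$) and $D^{j,l}_\rho=O(\rho)$ from the first step, I solve the system to conclude $\Lambda^{j,k}_\rho=o(\rho)$. The main obstacle is the precise identification of the coefficient of $\Lambda^{j,k}_\rho$ in the boundary integral: one must carefully disentangle the singular contributions near $\xi_k$ where the monopole term $G(\cdot,\xi_k)\Lambda^{j,k}_\rho$ and the dipole term $\nabla_yG(\cdot,\xi_k)\cdot D^{j,k}_\rho$ both concentrate, and ensure that the $D$-contributions do not absorb the leading $\Lambda$-coefficient we wish to isolate.
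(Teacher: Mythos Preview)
Your approach has a genuine gap: the identity of Lemma~\ref{lemma} alone is not enough to isolate $\Lambda^{j,k}_\rho$ with a nonzero coefficient. Carrying out your plan, the system you obtain has the form
\[
(1-\mu_{\rho,j})\,\frac{c_{i,k}}{\rho}\,s_{i,j}^{(k)}(1+o(1))
\;=\;\sum_{l} a_{i,k,l}\,\Lambda^{j,l}_\rho \;+\; \sum_{l} b_{i,k,l}\,D^{j,l}_\rho \;+\; o(\rho),
\]
with $D^{j,l}_\rho=O(\rho)$ (in fact equal to an explicit multiple of $\rho$ plus $o(\rho)$). The dipole fluxes $b_{i,k,l}$ do \emph{not} vanish in general: by Proposition~\ref{propo} they produce exactly the Hessian terms of $\mathcal{F}$ (this is the content of Proposition~\ref{t13}). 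Hence the right-hand side is only $A\Lambda+O(\rho)$, and even if $A$ is invertible you can extract at best $\Lambda=O\big((1-\mu_{\rho,j})/\rho\big)+O(\rho)$. Since you only know $(1-\mu_{\rho,j})=o(\rho)$ (not $o(\rho^2)$), this yields $\Lambda=O(\rho)$, not $o(\rho)$. Bootstrapping the equation does not help: each iteration merely relates $(1-\mu_{\rho,j})/\rho$ back to $\Lambda$ without gaining an extra power of $\rho$.

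What the paper does differently is to bring in a \emph{second, independent} identity---the Pohozaev relation of Lemma~\ref{lemma3}---which, after the computations with Lemma~\ref{lll}, produces
\[
4\mu_{\rho,j}\,\Lambda^{j,k}_\rho \;=\; 2\alpha_k\,\Lambda^{j,k}_\rho + o(\rho) + (1-\mu_{\rho,j})\,o(1).
\]
Here the crucial point is that $\Lambda^{j,k}_\rho$ appears on \emph{both} sides with different explicit coefficients, and the net coefficient $2(2\mu_{\rho,j}-\alpha_k)\to 4-2\alpha_k\neq 0$. The paper first uses Lemma~\ref{lemma} (exactly as you do, but only with the crude bound $\mathrm{RHS}=o(1)$ from Lemma~\ref{lemma2}) to secure $(1-\mu_{\rho,j})=o(\rho)$, and then feeds this into the Pohozaev relation to conclude $\Lambda^{j,k}_\rho=o(\rho)$. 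Your scheme is missing this second identity; without it the monopole amplitude $\Lambda$ cannot be decoupled from $(1-\mu_{\rho,j})/\rho$ at order $o(\rho)$.
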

\proof
By \eqref{stima1}, for every $j$ we have that
\begin{align*}
v_{\rho,j}(x) =  \mu_{\rho,j}\sum_{k=1}^m G(x,\xi_{k}) \Lambda^{k,j}_{\rho}+o(1)
\end{align*}
in $C^1(\bar{\Omega} \setminus \lbrace \xi_1, \ldots, \xi_m \rbrace)$. 
Moreover, by Lemma \ref{lemma} we have that
\begin{align*}
&(1-\mu_{\rho,j}) \rho^2 \int_{B_{\epsilon}(\xi_k)} \left( e^{u_{\rho}(x)} + e^{-u_{\rho}(x)} \right) \frac{\partial u_{\rho}}{ \partial x_i}(x) v_{\rho,j}(x) dx \\
&= \int_{\partial B_{\epsilon}(\xi_k)}\left( \frac{\partial v_{\rho,j}}{\partial \nu }(x) \frac{\partial u_{\rho}}{ \partial x_i}(x) -v_{\rho,j}(x)\frac{\partial^2 u_{\rho}}{ \partial \nu \partial x_i}(x) \right) d\sigma_x.
\end{align*}
Let us start by studying the left hand side.
\begin{equation} \label{aiut}
\begin{split}
&\rho^2 \int_{B_{\epsilon}(\xi_k)} \left( e^{u_{\rho}(x)} + e^{-u_{\rho}(x)} \right) \frac{\partial u_{\rho}}{ \partial x_i}(x) v_{\rho,j}(x) dx\\
&=\rho^2 \frac{\tau_k^2 \rho^2}{8} \frac{1}{\tau_k^4 \rho^4}\int_{B_{\frac{\sqrt{8}\epsilon}{\tau_k \rho}}(0)} \frac{e^{\ell_{\rho,k}\left(\frac{\tau_k \rho x}{\sqrt{8}} +\xi_{\rho,k} \right) + \varphi_{\rho}\left(\frac{\tau_k \rho x}{\sqrt{8}} +\xi_{\rho,k} \right)}}{\left(1+\frac{\vert x \vert^2}{8} \right)^2} \tilde{v}_{\rho,j}^{(k)}(x) \\
&\left( -\frac{4 \alpha_k}{\sqrt{8} \tau_k \rho} \frac{x_i}{1+\frac{\vert x \vert^2}{8}} + \frac{\partial \ell_{\rho,k}}{\partial x_i} \left(\frac{\tau_k \rho x}{\sqrt{8}} +\xi_{\rho,k} \right) + \frac{\partial \varphi_{\rho}}{\partial x_i}\left(\frac{\tau_k \rho x}{\sqrt{8}} +\xi_{\rho,k} \right) \right) dx + \\
& + \rho^2 \frac{\tau_k^2 \rho^2}{8} \tau_k^4 \rho^4 \int_{B_{\frac{\sqrt{8}\epsilon}{\tau_k \rho}}(0)} e^{-\ell_{\rho,k}\left(\frac{\tau_k \rho x}{\sqrt{8}} +\xi_{\rho,k} \right) - \varphi_{\rho}\left(\frac{\tau_k \rho x}{\sqrt{8}} +\xi_{\rho,k} \right)}\left(1+\frac{\vert x \vert^2}{8} \right)^2 \tilde{v}_{\rho,j}^{(k)}(x) \\
&\left( -\frac{4 \alpha_k}{\sqrt{8} \tau_k \rho} \frac{x_i}{1+\frac{\vert x \vert^2}{8}} + \frac{\partial \ell_{\rho,k}}{\partial x_i} \left(\frac{\tau_k \rho x}{\sqrt{8}} +\xi_{\rho,k} \right) + \frac{\partial \varphi_{\rho}}{\partial x_i}\left(\frac{\tau_k \rho x}{\sqrt{8}} +\xi_{\rho,k} \right) \right) dx+o(1)\\
&\hbox{using Lebesgue theorem}\\
&=-\frac{4}{\sqrt{8} \rho} \frac{\alpha_k}{\tau_k} \int_{\R^2} \frac{x_i}{\left(1+\frac{\vert x \vert^2}{8} \right)^3} \left(\frac{s_{1,j}^{(k)} x_1 + s_{2,j}^{(k)} x_2}{8+\vert x \vert^2}  \right) dx(1+o(1))\\
&=-\frac{16}{3\sqrt{8} \rho} \pi \frac{\alpha_k}{\tau_k} s_{i,j}^{(k)}(1+o(1)),
\end{split}
\end{equation}
where we used \eqref{cauchy2} and \eqref{cauchy3}.
For the right hand side, by Lemma \ref{lemma2} we get
$$ \int_{\partial B_{\epsilon}(\xi_k)} \left( \frac{\partial v_{\rho,j}}{\partial \nu }(x) \frac{\partial u_{\rho}}{ \partial x_i}(x) -v_{\rho,j}(x)\frac{\partial^2 u_{\rho}}{ \partial \nu \partial x_i}(x) \right) d\sigma_x = o(1),$$
hence 
$$-\frac{16}{3\sqrt{8}\rho}\pi \frac{\alpha_k}{\tau_k}s_{i,j}^{(k)}(1+o(1))(1-\mu_{\rho,j}) =o(1),$$
and so
$$(1-\mu_{\rho,j}) = o(\rho).$$
By \ref{lemma3}, and proceeding as in Theorem \ref{tre3}, we get
\begin{align*}
&R\int_{\partial B_R(\xi_k)} \left( 2\frac{\partial u_\rho}{\partial \nu}(x)\frac{\partial v_{\rho,j}}{\partial \nu}(x)-\nabla u_{\rho}(x) \cdot \nabla v_{\rho,j}(x) \right) d \sigma(x)\\
&=\int_{B_R(\xi_k)} \left\lbrace \left[(x-\xi_k) \cdot \nabla u_\rho (x)\right] \Delta v_{\rho,j}(x) + \left[ (x-\xi_k) \cdot \nabla v_{\rho,j} (x)\right] \Delta u_{\rho}(x) \right\rbrace dx\\
&=-\rho^2 \int_{\partial B_R(\xi_k)} (x-\xi_k) \cdot \nu  \left( e^{u_{\rho}(x)} - e^{-u_{\rho}(x)} \right) v_{\rho,j}(x) d\sigma(x) +\\
&+ 2 \rho^2 \int_{B_R(\xi_k)} \left( e^{u_{\rho}(x)} - e^{-u_{\rho}(x)} \right) v_{\rho,j}(x) dx +\\
&+(1-\mu_{\rho,j}) \rho^2 \int_{B_R(\xi_k)} \left[(x-\xi_k) \cdot \nabla u_\rho (x)\right] \left( e^{u_{\rho}(x)} + e^{-u_{\rho}(x)} \right) v_{\rho,j}(x) dx.
\end{align*}
For the left hand side we have
\begin{align*}
&R\int_{\partial B_R(\xi_k)} \left( 2\frac{\partial u_\rho}{\partial \nu}(x)\frac{\partial v_{\rho,j}}{\partial \nu}(x)-\nabla u_{\rho}(x) \cdot \nabla v_{\rho,j}(x) \right) d \sigma(x)\\
&=R\int_{\partial B_R(\xi_k)}\Bigg[  2\frac{\partial }{\partial \nu}\left(8 \pi \sum_{i=1}^m G(x,\xi_{i}) +o(1) \right) \frac{\partial }{\partial \nu}\left(\mu_{\rho,j} \sum_{i=1}^m G(x,\xi_i)\Lambda^{i,j}_{\rho} + o(1) \right) +\\
&-\nabla \left(8 \pi \sum_{i=1}^m G(x,\xi_{i}) +o(1) \right) \cdot \nabla \left(\mu_{\rho,j} \sum_{i=1}^m G(x,\xi_i)\Lambda^{i,j}_{\rho} + o(1) \right) \Bigg] d \sigma(x)\\
&\hbox{using Lemma \ref{lll}}\\
&=4 \mu_{\rho,j} \Lambda^{k,j}_{\rho} + o_R(1).
\end{align*}
For the right hand side we have that
\begin{equation} \label{rug1}
\begin{split}
&\rho^2 \int_{B_R(\xi_k)} \left( e^{u_{\rho}(x)} - e^{-u_{\rho}(x)} \right) v_{\rho,j}(x) dx\\
&= \alpha_k \rho^2 \frac{\tau_k^2 \rho^2}{8} \frac{1}{\tau_k^4 \rho^4} \int_{B_{\frac{\sqrt{8}R}{\tau_k \rho}}(0)} \frac{e^{\ell_{\rho,k} \left(\frac{\tau_k \rho x}{\sqrt{8}} + \xi_{\rho,k} \right) + \varphi_{\rho}\left(\frac{\tau_k \rho x}{\sqrt{8}} + \xi_{\rho,k} \right)}}{\left(1+\frac{\vert x \vert^2}{8} \right)^2} \tilde{v}^{(k)}_{\rho,j}(x) dx+\\
&- \alpha_k \rho^2 \frac{\tau_k^2 \rho^2}{8} \tau_k^4 \rho^4 \int_{B_{\frac{\sqrt{8}R}{\tau_k \rho}}(0)} e^{-\ell_{\rho,k} \left(\frac{\tau_k \rho x}{\sqrt{8}} + \xi_{\rho,k} \right) - \varphi_{\rho}\left(\frac{\tau_k \rho x}{\sqrt{8}} + \xi_{\rho,k} \right)}\left(1+\frac{\vert x \vert^2}{8} \right)^2 \tilde{v}^{(k)}_{\rho,j}(x) dx\\
&= \alpha_k \rho^2 \frac{\tau_k^2 \rho^2}{8} \frac{1}{\tau_k^4 \rho^4}\int_{B_{\frac{\sqrt{8}R}{\tau_k \rho}}(0)} \frac{e^{\ell_{\rho,k} \left(\frac{\tau_k \rho x}{\sqrt{8}} + \xi_{\rho,k} \right) + \varphi_{\rho}\left(\frac{\tau_k \rho x}{\sqrt{8}} + \xi_{\rho,k} \right)}}{\left(1+\frac{\vert x \vert^2}{8} \right)^2} \tilde{v}^{(k)}_{\rho,j}(x) dx+ o(\rho).
\end{split}
\end{equation}
Furthermore,
\begin{equation} \label{rug2}
\begin{split}
&\rho^2 \int_{B_R(\xi_k)} \left( e^{u_{\rho}(x)} + e^{-u_{\rho}(x)} \right) v_{\rho,j}(x) dx\\
&= \rho^2 \frac{\tau_k^2 \rho^2}{8} \frac{1}{\tau_k^4 \rho^4} \int_{B_{\frac{\sqrt{8}R}{\tau_k \rho}}(0)} \frac{e^{\ell_{\rho,k} \left(\frac{\tau_k \rho x}{\sqrt{8}} + \xi_{\rho,k} \right) + \varphi_{\rho}\left(\frac{\tau_k \rho x}{\sqrt{8}} + \xi_{\rho,k} \right)}}{\left(1+\frac{\vert x \vert^2}{8} \right)^2} \tilde{v}^{(k)}_{\rho,j}(x) dx+\\
&+ \rho^2 \frac{\tau_k^2 \rho^2}{8} \tau_k^4 \rho^4 \int_{B_{\frac{\sqrt{8}R}{\tau_k \rho}}(0)} e^{-\ell_{\rho,k} \left(\frac{\tau_k \rho x}{\sqrt{8}} + \xi_{\rho,k} \right) - \varphi_{\rho}\left(\frac{\tau_k \rho x}{\sqrt{8}} + \xi_{\rho,k} \right)}\left(1+\frac{\vert x \vert^2}{8} \right)^2 \tilde{v}^{(k)}_{\rho,j}(x) dx\\
&= \rho^2 \frac{\tau_k^2 \rho^2}{8} \frac{1}{\tau_k^4 \rho^4}\int_{B_{\frac{\sqrt{8}R}{\tau_k \rho}}(0)} \frac{e^{\ell_{\rho,k} \left(\frac{\tau_k \rho x}{\sqrt{8}} + \xi_{\rho,k} \right) + \varphi_{\rho}\left(\frac{\tau_k \rho x}{\sqrt{8}} + \xi_{\rho,k} \right)}}{\left(1+\frac{\vert x \vert^2}{8} \right)^2} \tilde{v}^{(k)}_{\rho,j}(x) dx+ o(\rho),
\end{split}
\end{equation}
Putting \eqref{rug1} and \eqref{rug2} together we get that
$$\rho^2 \int_{B_R(\xi_k)} \left( e^{u_{\rho}(x)} - e^{-u_{\rho}(x)} \right) v_{\rho,j}(x) dx=\alpha_k \Lambda^{k,j}_{\rho} +o(\rho).$$
Proceeding as in Theorem \ref{tre3} we have that
$$-\rho^2 \int_{\partial B_R(\xi_k)} (x-\xi_k) \cdot \nu  \left( e^{u_{\rho}(x)} - e^{-u_{\rho}(x)} \right) v_{\rho,j}(x) d\sigma(x) = o_R(1),$$
and
$$\rho^2 \int_{B_R(\xi_k)} \left[(x-\xi_k) \cdot \nabla u_\rho (x)\right] \left( e^{u_{\rho}(x)} + e^{-u_{\rho}(x)} \right) v_{\rho,j}(x) dx=o(1).$$
Putting everything together we get 
$$4 \mu_{\rho,j} \Lambda^{k,j}_{\rho} + o_R(1) = o_R(1)+ 2 \alpha_k \Lambda^{k,j}_{\rho}+o(\rho) + (1-\mu_{\rho,j}) o (1).$$
Since this hold true for every $R$, we get
$$2(2\mu_{\rho,j}-\alpha_k)\Lambda^{k,j}_{\rho} =o(\rho) +(1-\mu_{\rho,j}) o (1),$$
the conclusion follows.
\endproof

We recall the following:

\begin{prop}[{\cite[Proposition 2.3]{GOS}}] \label{propo}
Let $z_1$, $z_2$ and $z_3$ be in $\Omega$ and let $R$ be such that $B_R(z_1) \subset \Omega$, and $z_i \notin B_R(z_1)$ for $i=2,3$, then
\begin{align*}
&\int_{\partial B_R(z_1)} \left( \frac{\partial^2 G}{\partial \nu_x \partial x_i}(x,z_2) \frac{\partial G}{\partial y_j}(x,z_3) - \frac{\partial G}{\partial x_i}(x,z_2) \frac{\partial^2 G}{\partial \nu_x \partial y_j}(x,z_3)\right) d \sigma_x\\
&=\begin{cases} 0 & (z_1 \neq z_2, z_1 \neq z_3)\\
\frac{1}{2} \frac{\partial^2 R}{\partial x_i \partial x_j} (z_1,z_1)  & (z_1 = z_2 = z_3)\\
\frac{\partial^2 G}{\partial x_i \partial y_j} (z_1, z_3) & (z_1 = z_2 \neq z_3)\\
\frac{\partial^2 G}{\partial x_i \partial x_j} (z_1, z_2) & (z_1 = z_3 \neq z_2)\\
\end{cases}
\end{align*}
\end{prop}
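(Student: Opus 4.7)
My plan is to set $u(x):=\partial G(x,z_2)/\partial x_i$ and $v(x):=\partial G(x,z_3)/\partial y_j$, so that the integrand in the statement is exactly $v\,\partial_\nu u-u\,\partial_\nu v$. Since $-\Delta_x G(x,w)=\delta_w$ and partial differentiation in either argument commutes with $\Delta_x$, each of $u,v$ is harmonic in $\Omega$ away from the singular point $z_2$ (resp.\ $z_3$). The natural tool is Green's second identity on $B_R(z_1)$, after excising small $\epsilon$-balls around whichever of $z_2,z_3$ happens to coincide with $z_1$; under the hypotheses the remaining interior singular points stay outside $B_R(z_1)$.

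In the first case ($z_1\ne z_2$ and $z_1\ne z_3$) both $u$ and $v$ are smooth harmonic throughout $B_R(z_1)$ and Green's identity gives directly $\int_{\partial B_R(z_1)}(v\,\partial_\nu u-u\,\partial_\nu v)\,d\sigma=\int_{B_R(z_1)}(v\Delta u-u\Delta v)\,dx=0$. In the mixed cases exactly one of $u,v$ has a logarithmic-type singularity at $z_1$; applying Green's identity on $B_R(z_1)\setminus\overline{B_\epsilon(z_1)}$ converts the $\partial B_R(z_1)$ boundary term into
\[
\lim_{\epsilon\to 0}\int_{\partial B_\epsilon(z_1)}(v\,\partial_\omega u-u\,\partial_\omega v)\,d\sigma,
\]
with $\omega$ the outward normal to $B_\epsilon(z_1)$. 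On $\partial B_\epsilon(z_1)$ I parametrize $x=z_1+\epsilon\omega$, use $G(x,z_1)=-\frac{1}{2\pi}\log|x-z_1|+H(x,z_1)$ for the singular factor and Taylor-expand the other factor to first order about $z_1$. The elementary angular identities $\int_0^{2\pi}\omega_k\,d\theta=0$ and $\int_0^{2\pi}\omega_k\omega_\ell\,d\theta=\pi\delta_{k\ell}$ kill the $O(\epsilon^{-2})$ contributions (which carry a single factor $\omega_i$ or $\omega_j$) and evaluate the $O(\epsilon^{-1})$ contributions to $\partial^2 G/\partial x_i\partial y_j(z_1,z_3)$ when $z_1=z_2$, and to $\partial^2 G/\partial x_i\partial x_j(z_1,z_2)$ when $z_1=z_3$.

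For the diagonal case $z_1=z_2=z_3$, I split $u=u_s+u_r$, $v=v_s+v_r$ according to $G=\Gamma+H$ with $\Gamma(x,y)=-\frac{1}{2\pi}\log|x-y|$. On $\partial B_\epsilon(z_1)$ the integrand is a priori $O(\epsilon^{-3})$, but the purely singular product $v_s\,\partial_\omega u_s$ coincides identically with $u_s\,\partial_\omega v_s$ and cancels. Expanding $u_r(x)=u_r(z_1)+\epsilon\sum_\ell\omega_\ell\,\partial_{x_\ell}u_r(z_1)+O(\epsilon^2)$ and similarly for $v_r$, every remaining $O(\epsilon^{-2})$ piece carries a single angular factor and integrates to zero. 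The four surviving $O(\epsilon^{-1})$ pieces evaluate via $\int\omega_k\omega_\ell\,d\theta=\pi\delta_{k\ell}$ to $\partial_{x_i}\partial_{x_j}H(z_1,z_1)+\partial_{x_i}\partial_{y_j}H(z_1,z_1)$; using $R(x)=H(x,x)$ and the symmetry $H(x,y)=H(y,x)$, one checks that $\partial^2 R/\partial x_i\partial x_j(z_1)=2\bigl[\partial_{x_i}\partial_{x_j}H+\partial_{x_i}\partial_{y_j}H\bigr](z_1,z_1)$, which turns the answer into the claimed $\tfrac12\,\partial^2 R/\partial x_i\partial x_j(z_1,z_1)$.

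The hard part is the bookkeeping in the diagonal case. One must not only count the obvious $O(\epsilon^{-1})$ contributions $v_s\,\partial_\omega u_r$ and $u_s\,\partial_\omega v_r$, but also include the first-order Taylor corrections to $u_r$ and $v_r$ multiplied against the highly singular $\partial_\omega u_s$ and $\partial_\omega v_s$, which are themselves $O(\epsilon^{-2})$. Those two further contributions are each $O(\epsilon^{-1})$ once the expansion is inserted, and together they double the naive answer: missing either one changes the final constant by exactly the factor of two built into the coefficient $\tfrac12$ of $\partial^2 R/\partial x_i\partial x_j$.
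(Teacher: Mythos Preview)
The paper does not prove this proposition; it is quoted verbatim from \cite{GOS} and used as a black box in the proof of Proposition~\ref{t13}. Your argument is the standard one and is correct: the identification of the boundary integrand as $v\,\partial_\nu u-u\,\partial_\nu v$ with $u=\partial_{x_i}G(\cdot,z_2)$, $v=\partial_{y_j}G(\cdot,z_3)$, followed by Green's second identity on $B_R(z_1)\setminus B_\epsilon(z_1)$, is exactly how such formulas are established. Your handling of the diagonal case is also right: the purely singular terms cancel because both $u_s$ and $v_s$ are homogeneous of degree $-1$ in $|x-z_1|$ (so $u_s\,\partial_r v_s=v_s\,\partial_r u_s$ identically), and the four surviving $O(\epsilon^{-1})$ contributions combine, via the symmetry $H(x,y)=H(y,x)$, to $\tfrac12\,\partial_i\partial_j R(z_1)$ as claimed.

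One small remark on the statement itself rather than your proof: the hypothesis ``$z_i\notin B_R(z_1)$ for $i=2,3$'' must of course be read as applying only to those $z_i$ which differ from $z_1$; you interpreted it this way, which is the only sensible reading given the case distinction that follows.
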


We will now give an estimate for the eigenfunctions away from the blow-up points. This estimate will hold precisely when the one provided by Lemma \ref{due2} doesn't hold, that is for the eigenfunctions from $m+1$ to $3m$. Moreover, we will identify the connection between the Morse index of $u_{\rho}$ and that of the Hamilton function $\mathcal{F}$.
\begin{prop} \label{t13}
Let $j$ be such that $t_j=\textbf{0}$. Then 
\begin{equation} \label{fine1}
\frac{v_{\rho,j}}{\rho} \to 2 \pi \sum_{k=1}^m \frac{\tau_k}{\sqrt{8}} \left( s_{1,j}^{(k)} \frac{\partial G}{\partial x_1} (x,\xi_k)+ s_{2,j}^{(k)} \frac{\partial G}{\partial x_2} (x,\xi_k) \right)
\end{equation} 
in $C^1_{\mathrm{loc}}(\bar{\Omega} \setminus \lbrace \xi_1, \ldots, \xi_m \rbrace)$.
Furthermore
$$(1-\mu_{\rho,j})=3 \pi \rho^2 \eta_j (1+o(1)),$$
where the $\eta_j$'s, for $j=1, \ldots, 2m$, are the eigenvalues of the matrix $D (\mathrm{Hess} \, \mathcal{F}) D$, where $D$ is the diagonal matrix $D= \mathrm{diag} (\tau_1, \ldots, \tau_m, \tau_1, \ldots, \tau_m )$, and $\mathrm{Hess} \, \mathcal{F}$ is the Hessian matrix of the Hamilton function $\mathcal{F}$.
\end{prop}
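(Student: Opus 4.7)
The plan is to first establish the refined $C^1_{\mathrm{loc}}$ asymptotic for $v_{\rho,j}/\rho$ and then insert it into the integral identity of Lemma \ref{lemma} to extract the eigenvalue expansion. For the first claim I refine Green's representation beyond its leading order. Starting from $v_{\rho,j}(x)/\mu_{\rho,j}=\rho^2\int_\Omega(e^{u_\rho}+e^{-u_\rho})v_{\rho,j}(y)G(x,y)\,dy$, I split the domain into $\bigcup_k B_\epsilon(\xi_{\rho,k})$ plus a tail treated as in Lemma \ref{lemma2}, and Taylor-expand $G(x,y)=G(x,\xi_k)+\nabla_y G(x,\xi_k)\cdot(y-\xi_k)+O(|y-\xi_k|^2)$ for $x$ in a compact subset of $\bar\Omega\setminus\{\xi_1,\ldots,\xi_m\}$. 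The zeroth-order term produces $G(x,\xi_k)\Lambda_\rho^{k,j}=o(\rho)$ by Lemma \ref{quasi}, while the quadratic remainder is $o(\rho)$ because $\tilde v_j^{(k)}(x)=(s_{1,j}^{(k)}x_1+s_{2,j}^{(k)}x_2)/(8+|x|^2)$ is odd (Theorem \ref{tre3} applied with $t_j=\mathbf 0$). The linear term requires computing
\[
M_\rho^{k,j}:=\rho^2\int_{B_\epsilon(\xi_{\rho,k})}(e^{u_\rho}+e^{-u_\rho})v_{\rho,j}(y)(y-\xi_k)\,dy;
\]
writing $y-\xi_k=(\xi_{\rho,k}-\xi_k)+\tfrac{\tau_k\rho}{\sqrt 8}x$, the first summand yields $o(\rho)$ by Lemma \ref{quasi}, and the second, combined with the pointwise limit $(e^{u_\rho}+e^{-u_\rho})\sim 8/(\tau_k^2\rho^4(1+|x|^2/8)^2)$ and the elementary identity $\int_{\R^2}x_i^2/[(1+|x|^2/8)^2(8+|x|^2)]\,dx=2\pi$, gives $M_\rho^{k,j}=\tfrac{2\pi\tau_k\rho}{\sqrt 8}(s_{1,j}^{(k)},s_{2,j}^{(k)})+o(\rho)$. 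Substituting back and passing to the limit proves (1); the $C^1_{\mathrm{loc}}$ upgrade comes from differentiating Green's representation under the integral as in Lemma \ref{t4}.

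For the eigenvalue expansion I apply Lemma \ref{lemma} with $\Omega'=B_\epsilon(\xi_k)$ and $i\in\{1,2\}$. The left-hand side equals $(1-\mu_{\rho,j})$ times the integral already computed in equation \eqref{aiut} of Lemma \ref{quasi}'s proof, namely $-\tfrac{16\pi\alpha_k}{3\sqrt 8\,\rho\,\tau_k}s_{i,j}^{(k)}(1+o(1))$. For the right-hand side I insert the asymptotic for $v_{\rho,j}/\rho$ just derived together with $u_\rho=8\pi\sum_m\alpha_m G(\cdot,\xi_m)+o(1)$ in $C^1$ near $\partial B_\epsilon(\xi_k)$ (from Theorem \ref{ang}); the boundary integrand becomes a $\tau_n\alpha_m s_{l,j}^{(n)}$-weighted sum of integrals
\[
\int_{\partial B_\epsilon(\xi_k)}\bigl(\partial_\nu\partial_{x_l}G(x,\xi_n)\,\partial_{x_i}G(x,\xi_m)-\partial_{x_l}G(x,\xi_n)\,\partial_\nu\partial_{x_i}G(x,\xi_m)\bigr)d\sigma_x.
\]
Using the symmetry $G(x,y)=G(y,x)$ to recast these in the mixed $(\partial_1,\partial_2)$-form required by Proposition \ref{propo} and applying the proposition with $z_1=\xi_k$, $z_2=\xi_n$, $z_3=\xi_m$, each integral vanishes when $n,m\neq k$, produces second cross-derivatives of $G$ at $(\xi_k,\xi_\cdot)$ when exactly one of $n,m$ equals $k$, and produces $\tfrac 12\,\partial_l\partial_i R(\xi_k)$ when $n=m=k$.

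Direct differentiation of $\mathcal F(\xi_1,\ldots,\xi_m)=\tfrac 12\sum_k R(\xi_k)+\tfrac 12\sum_{k\neq p}\alpha_k\alpha_p G(\xi_k,\xi_p)$ shows that these are exactly the entries of $\mathrm{Hess}\,\mathcal F$: the Robin-function contributions yield the diagonal block (with the factor $\tfrac 12$ in Proposition \ref{propo} matching the $\tfrac 12$ in $\mathcal F$), while the mixed $n=k\neq m$ and $m=k\neq n$ contributions account for the off-diagonal blocks and the remaining diagonal terms. Assembling, the right-hand side of Lemma \ref{lemma} rewrites as $\tfrac{16\pi^2\rho}{\sqrt 8}\cdot\tfrac{\alpha_k}{\tau_k}\cdot(D\,\mathrm{Hess}\,\mathcal F\,D\,\tilde s_j)_{(k,i)}(1+o(1))$, where $\tilde s_j=(s_{1,j}^{(1)},\ldots,s_{1,j}^{(m)},s_{2,j}^{(1)},\ldots,s_{2,j}^{(m)})\in\R^{2m}$ and $D=\mathrm{diag}(\tau_1,\ldots,\tau_m,\tau_1,\ldots,\tau_m)$; the conjugation by $D$ emerges from the factors $\tau_n$ multiplying each $s_{l,j}^{(n)}$ in the asymptotic of $v_{\rho,j}/\rho$. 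Equating left- and right-hand sides and cancelling the common factor $\alpha_k s_{i,j}^{(k)}/\tau_k$ gives
\[
(1-\mu_{\rho,j})=3\pi\rho^2\,\frac{(D\,\mathrm{Hess}\,\mathcal F\,D\,\tilde s_j)_{(k,i)}}{\tilde s_{j,(k,i)}}(1+o(1));
\]
since this must hold for every $(k,i)$, consistency forces $\tilde s_j$ to be an eigenvector of $D\,\mathrm{Hess}\,\mathcal F\,D$ with some eigenvalue $\eta_j$, which is the statement.

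The main obstacle is the bookkeeping in the boundary-integral step: a naive application of Green's identity to our integrand produces $\partial_1\partial_1$-derivatives of $G$, matching the diagonal blocks of $\mathrm{Hess}\,\mathcal F$ but not the off-diagonal blocks (which are mixed $\partial_1\partial_2$-derivatives), so the symmetry of $G$ together with the precise form of Proposition \ref{propo} must be invoked carefully, without sign errors, in order both to reconcile the two kinds of derivatives and to recover the correct conjugation by $D$ and the overall constant $3\pi$.
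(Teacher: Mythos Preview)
Your proposal is correct and follows essentially the same route as the paper: Green's representation with a first-order Taylor expansion of $G(x,\cdot)$ around each $\xi_{\rho,k}$, with the zeroth-order term killed by Lemma \ref{quasi} and the first-order term producing the $\partial G/\partial y_i$ asymptotic, followed by insertion into Lemma \ref{lemma} on $B_\epsilon(\xi_k)$ and evaluation of the resulting boundary integrals via Proposition \ref{propo} to identify $D(\mathrm{Hess}\,\mathcal F)D$. The only cosmetic difference is your treatment of the quadratic Taylor remainder, where you appeal to the oddness of the limit profile whereas the paper simply bounds it by $C\rho^2\int_{|y|\le C/\rho}|y|^2(1+|y|^2/8)^{-2}dy=O(\rho^2\log\rho)=o(\rho)$; the latter is cleaner since it avoids needing a rate of convergence for $\tilde v_{\rho,j}^{(k)}\to\tilde v_j^{(k)}$ over the growing ball.
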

\proof
Let $K \subset \bar{\Omega} \setminus \lbrace \xi_1, \ldots, \xi_m \rbrace$ be compact. Let
$$\epsilon'=\min_{k} \inf_{\rho} \mathrm{dist}(K,\xi_{\rho,k}),$$
and let $\lambda=\min(\epsilon, \epsilon')$.
Since $G(\cdot , \cdot)$ is smooth on $K \times B_{\lambda}(\xi_{\rho,k})$, we can consider its Taylor expansion to get
\begin{align*}
G(x,y)&= G(x, \xi_{\rho,k}) + \sum_{i=1,2} \frac{\partial G}{\partial y_i} (x,\xi_{\rho,k}) (y-\xi_{\rho,k})_i \\
&+ \frac{1}{2} \sum_{i,h=1,2} \frac{\partial^2 G}{\partial y_i \partial y_h} (x, \eta_{\rho}) (y-\xi_{\rho,k})_i(y-\xi_{\rho,k})_h,
\end{align*}
where $\eta_{\rho}$ is a point lying on the segment connecting $y$ to $\xi_{\rho,k}$, and hence contained in $B_{\lambda}(\xi_{\rho,k})$.
Taking $\lambda=\rho^\gamma$ with $\gamma <\frac{1}{4}$ we get
\begin{align*}
&\left| \mu_{\rho,j} \rho^2 \int_{\Omega \setminus \cup_{k=1}^m B_{\lambda}(\xi_{\rho,k})} \left(e^{u_{\rho}(y)} + e^{-u_{\rho}(y)} \right) v_{\rho,j}(y) G(x,y) dy \right|\\
&\leqslant C\left( \frac{\rho^2}{\left(\rho^2 + \lambda^2 \right)^2}+\rho^2 \right) \int_{\Omega \setminus \cup_{k=1}^m B_{\lambda}(\xi_{\rho,k})} \vert G(x,y) \vert dy\\
&\hbox{since $G(x, \cdot) \in L^1(\Omega)$}\\
&=o(\rho).
\end{align*}
Using Green's representation formula, for each $x \in K$, we have
\begin{align*}
v_{\rho,j}(x)&= \mu_{\rho,j} \rho^2 \int_{\Omega} \left(e^{u_{\rho}(y)} + e^{-u_{\rho}(y)} \right) v_{\rho,j}(y) G(x,y) dy\\
&=\mu_{\rho,j}\sum_{k=1}^m \rho^2 \int_{B_{\lambda}(\xi_{\rho,k})} \left(e^{u_{\rho}(y)} + e^{-u_{\rho}(y)} \right) v_{\rho,j}(y)\\
&\left( G(x, \xi_{\rho,k}) + \sum_{i=1,2} \frac{\partial G}{\partial y_i} (x,\xi_{\rho,k}) (y-\xi_{\rho,k})_i \right) dy +R_{\rho} + o(\rho),
\end{align*}
having set
\begin{align*}
R_{\rho}&=\frac{\mu_{\rho,j}}{2} \sum_{k=1}^m \rho^2 \int_{B_{\lambda}(\xi_{\rho,k})} \left(e^{u_{\rho}(y)} + e^{-u_{\rho}(y)} \right) v_{\rho,j}(y) \\
&\sum_{i,h=1,2} \frac{\partial^2 G}{\partial y_i \partial y_h} (x, \eta_{\rho}) (y-\xi_{\rho,k})_i(y-\xi_{\rho,k})_h  dy.
\end{align*}
Since $\eta_{\rho} \in B_{\lambda}(\xi_{\rho,k})$ and $x \notin B_{\lambda}(\xi_{\rho,k})$, we have
$$\left|\frac{\partial^2 G}{\partial y_i \partial y_h} (x, \eta_{\rho}) \right| \leqslant \sup_{i,h=1,2; z \in B_{\lambda}(\xi_{\rho,k})} \left|\frac{\partial^2 G}{\partial y_i \partial y_h} (x, z) \right| \leqslant C.$$
Since $\mu_{\rho,j} = 1+o(1)$, we have
\begin{align*}
\vert R_{\rho} \vert &\leqslant C \sum_{k=1}^m \rho^2 \int_{B_{\lambda}(\xi_{\rho,k})} \left(e^{u_{\rho}(y)} + e^{-u_{\rho}(y)} \right) \vert y-\xi_{\rho,k} \vert^2 dy \\
&= C \sum_{k=1}^m \frac{\rho^2}{64} \int_{B_{\frac{\sqrt{8}\lambda}{\tau_k \rho}}(0)} \frac{e^{\ell_{\rho,k}\left(\frac{\tau_k \rho y}{\sqrt{8}} + \xi_{\rho,k} \right) + \varphi_{\rho}\left(\frac{\tau_k \rho y}{\sqrt{8}} + \xi_{\rho,k} \right)}}{\left(1+\frac{\vert y \vert^2}{8} \right)^2} \vert y \vert^2 dy + \\
&+ C \sum_{k=1}^m \frac{\tau_k^8 \rho^{10}}{64} \int_{B_{\frac{\sqrt{8}\lambda}{\tau_k \rho}}(0)} e^{-\ell_{\rho,k}\left(\frac{\tau_k \rho y}{\sqrt{8}} + \xi_{\rho,k} \right) - \varphi_{\rho}\left(\frac{\tau_k \rho y}{\sqrt{8}} + \xi_{\rho,k} \right)}\left(1+\frac{\vert y \vert^2}{8} \right)^2 \vert y \vert^2 dy\\
&=o(\rho),
\end{align*}
where we used the fact that
\begin{align*}
&\rho^2 \left| \int_{B_{\frac{\sqrt{8}\lambda}{\tau_k \rho}}(0)} \frac{e^{\ell_{\rho,k}\left(\frac{\tau_k \rho y}{\sqrt{8}} + \xi_{\rho,k} \right) + \varphi_{\rho}\left(\frac{\tau_k \rho y}{\sqrt{8}} + \xi_{\rho,k} \right)}}{\left(1+\frac{\vert y \vert^2}{8} \right)^2} \vert y \vert^2 dy \right| \leqslant C\rho^2 \left(\log(\rho) + 1 \right)=o(\rho).
\end{align*}
Therefore
\begin{align*}
v_{\rho,j}(x) &=\mu_{\rho,j} \sum_{k=1}^m \left(G(x, \xi_{k})+o(1) \right) \rho^2 \int_{B_{\lambda}(\xi_{\rho,k})} \left(e^{u_{\rho}(y)} + e^{-u_{\rho}(y)} \right) v_{\rho,j}(y) dy+\\
&+\mu_{\rho,j} \sum_{i=1,2} \frac{\partial G}{\partial y_i} (x,\xi_{k})\left(1+o(1) \right) \\
&\sum_{k=1}^m \rho^2 \int_{B_{\lambda}(\xi_{\rho,k})} \left(e^{u_{\rho}(y)} + e^{-u_{\rho}(y)} \right) v_{\rho,j}(y) (y-\xi_{\rho,k})_i dy+ o(\rho)\\
&\hbox{using Lemma \ref{quasi} }\\
&=\mu_{\rho,j} \sum_{i=1,2} \frac{\partial G}{\partial y_i} (x,\xi_{k})\left(1+o(1) \right) \\
&\sum_{k=1}^m \rho^2 \frac{\tau_k^2 \rho^2}{8} \frac{1}{\tau_k^4 \rho^4} \frac{\tau_k \rho}{8}\int_{B_{\frac{\sqrt{8}\lambda}{\tau_k \rho}}(0)} \frac{e^{\ell_{\rho,k}\left( \frac{\tau_k \rho y}{\sqrt{8}} + \xi_{\rho,k} \right) + \varphi_{\rho}\left( \frac{\tau_k \rho y}{\sqrt{8}} + \xi_{\rho,k} \right)}}{\left(1 + \frac{\vert y \vert^2}{8} \right)^2}  \tilde{v}^{(k)}_{\rho,j}(y) y_i dy+\\
&+\mu_{\rho,j} \sum_{i=1,2} \frac{\partial G}{\partial y_i} (x,\xi_{k})\left(1+o(1) \right) \sum_{k=1}^m \rho^2 \frac{\tau_k^2 \rho^2}{8} \tau_k^4 \rho^4 \frac{\tau_k \rho}{8}\\
&\int_{B_{\frac{\sqrt{8}\lambda}{\tau_k \rho}}(0)} e^{-\ell_{\rho,k}\left( \frac{\tau_k \rho y}{\sqrt{8}} + \xi_{\rho,k} \right) - \varphi_{\rho}\left( \frac{\tau_k \rho y}{\sqrt{8}} + \xi_{\rho,k} \right)}\left(1 + \frac{\vert y \vert^2}{8} \right)^2  \tilde{v}^{(k)}_{\rho,j}(y) y_i dy +o(\rho)\\
&=\mu_{\rho,j} \sum_{i=1,2} \frac{\partial G}{\partial y_i} (x,\xi_{k})(1+o(1)) \\
&\sum_{k=1}^m \frac{1}{8\tau_k^2} \frac{\tau_k \rho}{\sqrt{8}}\int_{B_{\frac{\sqrt{8}\lambda}{\tau_k \rho}}(0)} \frac{e^{\ell_{\rho,k}\left( \frac{\tau_k \rho y}{\sqrt{8}} + \xi_{\rho,k} \right) + \varphi_{\rho}\left( \frac{\tau_k \rho y}{\sqrt{8}} + \xi_{\rho,k} \right)}}{\left(1 + \frac{\vert y \vert^2}{8} \right)^2}  \tilde{v}^{(k)}_{\rho,j}(y) y_i dy+ o(\rho).
\end{align*}
Therefore
\begin{align*}
\frac{v_{\rho,j}(x)}{\rho} &= \mu_{\rho,j} \sum_{i=1,2} \frac{\partial G}{\partial y_i} (x,\xi_{k})(1+o(1))\\
& \sum_{k=1}^m \frac{1}{8 \tau_k^2} \frac{\tau_k}{\sqrt{8}}\int_{B_{\frac{\sqrt{8}\lambda}{\tau_k \rho}}(0)} \frac{e^{\ell_{\rho,k}\left( \frac{\tau_k \rho y}{\sqrt{8}} + \xi_{\rho,k} \right) + \varphi_{\rho}\left( \frac{\tau_k \rho y}{\sqrt{8}} + \xi_{\rho,k} \right)}}{\left(1 + \frac{\vert y \vert^2}{8} \right)^2}  \tilde{v}^{(k)}_{\rho,j}(y) y_i dy+ o(1)\\
&\hbox{using Lebesgue}\\
&=\mu_{\rho,j} \sum_{i=1,2} \frac{\partial G}{\partial y_i} (x,\xi_{k})(1+o(1)) \sum_{k=1}^m \frac{\tau_k}{\sqrt{8}}\\
&\int_{\R^2} \frac{y_i}{\left(1 + \frac{\vert y \vert^2}{8} \right)^2} \frac{s_{1,j}^{(k)} y_1 + s_{2,j}^{(k)} y_2}{8+\vert y \vert^2}  dy+o(1)\\
&=2\pi \mu_{\rho,j} \sum_{i=1,2} \sum_{k=1}^m \frac{\partial G}{\partial y_i} (x,\xi_{k}) \frac{\tau_k s_{i,j}^{(k)}}{\sqrt{8}}+o(1),
\end{align*}
and so \ref{fine1} is proved.
 
By Lemma \ref{lemma}, we get
\begin{align*}
&(1-\mu_{\rho,n}) \rho^2 \int_{B_{\epsilon}(\xi_k)} \left( e^{u_{\rho}(x)} + e^{-u_{\rho}(x)} \right) \frac{\partial u_{\rho}}{ \partial x_j}(x) v_{\rho,n}(x) dx\\
&= \int_{\partial B_{\epsilon}(\xi_k)}\left( \frac{\partial v_{\rho,n}}{\partial \nu }(x) \frac{\partial u_{\rho}}{ \partial x_j}(x) -v_{\rho,n}(x)\frac{\partial^2 u_{\rho}}{ \partial \nu \partial x_j}(x)\right) d\sigma_x.
\end{align*}
For the left hand side, using \eqref{aiut}, we have
$$\rho^2 \int_{B_{\epsilon}(\xi_{\rho,k})} \left( e^{u_{\rho}(x)} + e^{-u_{\rho}(x)} \right) \frac{\partial u_{\rho}}{ \partial x_j}(x) v_{\rho,n}(x) dx=-\frac{16}{3\sqrt{8} \rho} \pi \frac{\alpha_k}{\tau_k} s_{j,n}^{(k)}(1+o(1)).$$
For the right hand side we have
\begin{align*}
&\int_{\partial B_{\epsilon}(\xi_k)} \left( \frac{\partial v_{\rho,n}}{\partial \nu }(x) \frac{\partial u_{\rho}}{ \partial x_j}(x) -v_{\rho,n}(x)\frac{\partial^2 u_{\rho}}{ \partial \nu \partial x_j}(x)\right) d\sigma_x\\
&=\rho \int_{\partial B_{\epsilon}(\xi_k)} \frac{\partial}{\partial \nu_x}\left( 2\pi \mu_{\rho,n} \sum_{i=1,2} \sum_{l=1}^m \frac{\partial G}{\partial y_i} (x,\xi_{l}) \frac{\tau_l s_{i,n}^{(l)}}{\sqrt{8}}+o(1) \right)\\
&\frac{\partial}{\partial x_j} \left(8 \pi \sum_{h=1}^m \alpha_h G(x, \xi_h)+o(1) \right) d\sigma_x +\\
&-\rho \int_{\partial B_{\epsilon}(\xi_k)} \left( 2\pi \mu_{\rho,n} \sum_{i=1,2} \sum_{l=1}^m \frac{\partial G}{\partial y_i} (x,\xi_{l}) \frac{\tau_l s_{i,n}^{(l)}}{\sqrt{8}}+o(1) \right)\\
&\frac{\partial^2}{\partial x_j \partial \nu_x} \left(8 \pi \sum_{h=1}^m \alpha_h G(x, \xi_h)+o(1) \right) d\sigma_x \\
&=16 \pi^2 \rho \mu_{\rho,n} \sum_{l,h=1}^m \sum_{i=1,2} \alpha_h \frac{\tau_l s_{i,n}^{(l)}}{\sqrt{8}}\\
&\int_{\partial B_{\epsilon}(\xi_k)} \left( \frac{\partial^2 G}{\partial y_i \partial \nu_x} (x, \xi_l) \frac{\partial G}{\partial x_j} (x,\xi_h) - \frac{\partial G}{\partial y_i} (x,\xi_l)\frac{\partial^2 G}{\partial x_j \partial \nu_x}(x,\xi_h)\right) d\sigma_x + o(\rho) \\
&\hbox{using Proposition \ref{propo}}\\
&= -16 \pi^2 \alpha_k \rho \mu_{\rho,n} \sum_{l=1}^m \sum_{i=1,2} \frac{\tau_l s_{i,n}^{(l)}}{\sqrt{8}} \frac{\partial^2 \mathcal{F}}{\partial x_i^{(l)} \partial x_j^{(k)}}(\xi_1, \ldots, \xi_m)+o(\rho),
\end{align*}
Hence
\begin{align*}
&-\frac{16}{3\sqrt{8} \rho} \pi \frac{\alpha_k}{\tau_k} s_{j,n}^{(k)}(1+o(1))(1-\mu_{\rho,n})\\
&= -16 \pi^2 \alpha_k \rho \mu_{\rho,n} \sum_{l=1}^m \sum_{i=1,2} \frac{\tau_l s_{i,n}^{(l)}}{\sqrt{8}} \frac{\partial^2 \mathcal{F}}{\partial x_i^{(l)} \partial x_j^{(k)}}(\xi_1, \ldots, \xi_m)+o(\rho).
\end{align*}
Having set
$$\eta_n:= \frac{\sum_{l=1}^m \sum_{i=1,2} \tau_l \tau_k s_{i,n}^{(l)} \frac{\partial^2 \mathcal{F}}{\partial x_i^{(l)} \partial x_j^{(k)}}(\xi_1, \ldots, \xi_m)}{s_{j,n}^{(k)}},$$
we get
$$s_{j,n}^{(k)} \eta_n = \sum_{l=1}^m \sum_{i=1,2} \tau_l \tau_k s_{i,n}^{(l)} \frac{\partial^2 \mathcal{F}}{\partial x_i^{(l)} \partial x_j^{(k)}}(\xi_1, \ldots, \xi_m),$$
so that $s_n= \left(s_{1,n}^{(1)}, \ldots, s_{1,n}^{(m)},s_{2,n}^{(1)}, \ldots, s_{2,n}^{(m)} \right)$ and $\eta_n$ are respectively the eigenvectors and the eigenvalues of the matrix $D (\mathrm{Hess} \, \mathcal{F}) D$. 
We can therefore conclude that 
$$(1-\mu_{\rho,n})=3 \pi \rho^2 \eta_n +o(\rho^2).$$
\endproof
Finally we can prove the following
\proof [Proof of Theorem \ref{main2}].

Proposition \ref{t13} proves point 1 and point 3, while Theorem \ref{tre3} jointly with point (i) of Proposition \ref{penultimo} proves point 2.
\endproof
And

\proof [Proof of Theorem \ref{main3}].

With similar calculus of those in Proposition \ref{penultimo} we can prove that for $3m+1<j \leqslant 4m$, $\mu_{\rho,j}$ still goes to one. In the proof of point (iii) in Proposition \ref{penultimo} we showed that for $3m+1<j \leqslant 4m$, $t^{(k)}_j \neq 0$,  so Lemma \ref{due2} holds and in such a case at the end of Theorem \ref{tre3} we proved \eqref{due}, therefore point 1  and point 3 are proved. Finally, considering Proposition \ref{uno1} and the fact that the functions $\frac{8-\vert x \vert^2}{8+\vert x \vert^2}$ and $\frac{x_i}{8+\vert x \vert^2}$, for $i=1,2$,  are orthogonal and that eigenfunctions relative to different eigenspaces have to be orthogonal we get point 2.
\endproof

We are now ready to prove the main result. 
\proof [Proof of Theorem \ref{finefinefine}]. 

For $\rho$ sufficiently small, by Proposition \ref{t3} we have that $\mu_{\rho,j} < 1$ for every $j=1, \ldots, m$. By Proposition \ref{penultimo} we have that $\mu_{\rho,j} >1$ for $j>3m$ and for every $m+1 \leqslant j \leqslant 3m$ we have that $\mu_{\rho,j} \leqslant 1+C \rho^2$, so by Theorem \ref{tre3} we get $t_j=\textbf{0}$. Therefore, since the hypotheses of Proposition \ref{t13} are satisfied, we get
$$\mu_{\rho,j}=1-3 \pi \rho^2 \eta_j (1+o(1)),$$
where the $\eta_j$'s, for $j=1, \ldots, 2 m$, are the eigenvalues of the matrix \\$D (\mathrm{Hess} \,\mathcal{F})D$. Then, to any positive eigenvalue $\eta_j$ of the matrix $D( \mathrm{Hess} \, \mathcal{F})D$ there corresponds a negative $\mu_{\rho,j}$.
Since $D$ is a diagonal, positive definite matrix, the signature of the matrix $D (\mathrm{Hess} \, \mathcal{F})D$ is equal to the signature of the matrix $\mathrm{Hess} \,\mathcal{F}$, and hence the theorem is proved.
\endproof

We deduce the following corollary.

\proof [Proof of Corollary \ref{corol}].

It is enough to notice that $\mathrm{Hess} \,\mathcal{F}$ is a $2m \times 2m$ matrix, and so  $0 \leqslant \mathcal{M}(\mathrm{Hess} \,\mathcal{F}) \leqslant 2m$.
\endproof

\bibliographystyle{abbrv}
\bibliography{bibliografia}

\end{document}